\begin{document}
\newtheorem{theorem}{Theorem}[section]
\newtheorem{defin}[theorem]{Definition}
\newtheorem{remark}[theorem]{Remark}
\newtheorem{proposition}[theorem]{Proposition}
\newtheorem{lemma}[theorem]{Lemma}
\newtheorem{corollary}[theorem]{Corollary}
\def\div{\nabla\cdot}
\def\rot{\nabla\times}
\def\sign{{\rm sign}}
\def\arsinh{{\rm arsinh}}
\def\arcosh{{\rm arcosh}}
\def\diag{{\rm diag}}
\def\const{{\rm const}}
\def\eps{\varepsilon}
\def\phi{\varphi}
\def\theta{\vartheta}
\newcommand{\Bchi}{\mbox{$\hspace{0.12em}\shortmid\hspace{-0.62em}\chi$}}
\def\C{\hbox{\rlap{\kern.24em\raise.1ex\hbox
      {\vrule height1.3ex width.9pt}}C}}
\def\R{\mathbb{R}}
\def\P{\hbox{\rlap{I}\kern.16em P}}
\def\Q{\hbox{\rlap{\kern.24em\raise.1ex\hbox
      {\vrule height1.3ex width.9pt}}Q}}
\def\M{\hbox{\rlap{I}\kern.16em\rlap{I}M}}
\def\N{\hbox{\rlap{I}\kern.16em\rlap{I}N}}
\def\Z{\hbox{\rlap{Z}\kern.20em Z}}
\def\K{\mathcal{K}}
\def\A{\mathcal{A}}
\def\T{\mathcal{T}}
\def\D{\mathcal{D}}
\def\L{\mathcal {L}}
\def\({\begin{eqnarray}}
\def\){\end{eqnarray}}
\def\[{\begin{eqnarray*}}
\def\]{\end{eqnarray*}}
\def\part#1#2{{\partial #1\over\partial #2}}
\def\partk#1#2#3{{\partial^#3 #1\over\partial #2^#3}}
\def\mat#1{{D #1\over Dt}}
\def\dx{\, \nabla_x}
\def\dv{\, \nabla_v}
\def\dt{\, \partial_t}
\def\as{_*}
\def\d{\, \text{d}}
\def\e{\, \text{e}}
\def\pmb#1{\setbox0=\hbox{$#1$}
  \kern-.025em\copy0\kern-\wd0
  \kern-.05em\copy0\kern-\wd0
  \kern-.025em\raise.0433em\box0 }
\def\bar{\overline}
\def\lbar{\underline}
\def\fref#1{(\ref{#1})}

\begin{center}
{\LARGE 
Asymptotic analysis of a Vlasov-Boltzmann equation with anomalous scaling
}
\bigskip

{\large P. Aceves-S\'anchez}\footnote{Faculty of Mathematics, University of Vienna, Oskar-Morgenstern-Platz 1, 1090 Vienna, Austria.}
{\large \& A. Mellet}\footnote{Department of Mathematics, University of Maryland, College Park MD 20742.\hfill\break\indent\indent  Fondation Sciences Math\'ematiques de Paris, 75231 PARIS} 
\end{center}
\vskip 1cm

\noindent{\bf Abstract.} 
This paper is devoted to the approximation of the linear Boltzmann equation by fractional diffusion equations.
Most existing results address this question when there is no external acceleration field.
The goal of this paper is to investigate the case where a given acceleration field is present.
The main result of this paper shows that for an appropriate scaling of the acceleration field, the usual fractional diffusion equation is supplemented by an advection term.
Both the critical and supercritical case are considered.

\vskip 1cm

\noindent{\bf Key words:} Kinetic equation, Vlasov equation, linear Boltzmann operator, asymptotic analysis, anomalous diffusion limit, fractional diffusion, advection.

\medskip

\noindent{\bf AMS subject classification:} 76P05, 35B40, 26A33
\vskip 1cm

\noindent{\bf Acknowledgment:} A.M. was partially supported by NSF grant DMS-1501067. P.A.S gratefully acknowledges support from
the Consejo Nacional de Ciencia y Tecnologia of Mexico,
the PhD program \emph{Dissipation and
Dispersion in Nonlinear PDEs} funded by the Austrian Science Fund, grant no. W1245, and the Vienna Science and Technology Fund, grant no. LS13-029. 
P.A.S. acknowledges fruitful discussions with Christian Schmeiser.
The authors would also like to thank the Fondation Sciences Math\'ematiques de Paris, where this work was initiated, for its support.



\tableofcontents


\section{Introduction and main results}
\subsection{Introduction}
The goal of this paper is to study the asymptotic behavior of the solution of the following equation as $\varepsilon$ tends to zero:
\begin{equation}\label{kineticeq}
\left\{
\begin{array}{rcll}
\displaystyle \eps^{\alpha-1}  \partial_t f_\varepsilon + v \cdot \nabla_x f_\varepsilon + \frac{1}{\varepsilon^{2- \alpha}} E \cdot \nabla_v f_\varepsilon & = & \displaystyle \frac{1}{\eps}Q ( f_\varepsilon) & \text{in } ( 0 , \infty) \times \R^d \times \R^d, \\[3pt]
\displaystyle f_\varepsilon ( \cdot, \cdot, 0)  & = & \displaystyle f^{in}             & \text{in } \R^d \times \R^d, 
\end{array} \right.
\end{equation}
where $E \in\left[ W^{ 1, \infty} ( \R^d \times [ 0, \infty))\right]^d$ is a given acceleration field and 
$Q$ is the linear Boltzmann operator defined as
\begin{equation}\label{GeColOpe}
 Q ( f) : = \int \sigma ( v, v') M( v) f ( v') - \sigma ( v', v) M( v') f ( v) \d v' .
\end{equation}

Typically, $f_\eps(x,v,t)$ denotes the distribution function of some particles in a dilute gas, subject to an external acceleration field $E(x,t)$.
The small parameter $\eps$ can be interpreted as the Knudsen number, which measures the relative importance of the scattering phenomenon (described here by the collision operator $Q$) compared to the transport of particles ($\eps$ is often introduced in the literature as
the ratio of the mean free path over some typical macroscopic length, such as the length of the device being studied).
The coefficient $\alpha$ determines the relative order of the various terms in \eqref{kineticeq} and it will be fixed by the properties of the  thermodynamical equilibrium $M(v)$ appearing in the operator $Q$. 
One possible definition for $\alpha$ is
\begin{equation}\label{def:alpha} 
\alpha = \sup \left\{\beta\leq 2 \, ;\, \int_{\R^d} |v|^\beta M(v)\, dv <\infty\right\}.
\end{equation}
However, we will make stronger assumptions on the behavior of $M$ for large $|v|$ which will make the definition of $\alpha$ simpler.
Concerning the particular choice of scaling in \eqref{kineticeq}, we note that  the $\eps^{\alpha-1}  $ 
in front of the time derivative corresponds to a particular choice of a time scale at which we know that diffusion will be observed (\cite{MR2763032,MR2815035}), while the $\frac{1}{\varepsilon^{2- \alpha}}$ in front of the force term correspond to a strong field assumption (we will always have $\alpha<2$ and so $\frac{1}{\varepsilon^{2- \alpha}}\gg 1$). Obviously other choices of scaling for this force term are possible (see Remark \ref{rem:hf}), but this particular scaling is exactly the one for which the diffusion process (due to the scattering  phenomenon of $Q$) and the advection process (due to the acceleration term $E$) are of the same order in the limit (see equations \eqref{drifdifeq} and \eqref{drifdifeq2}).

\medskip

When $M(v)$ is a Maxwellian distribution function, or more generally when $M(v)$ satisfies
$$\int_{\R^d} |v|^2 M(v)\, dv <\infty,$$ then \eqref{def:alpha} gives $\alpha=2$ and,
we recognize in \eqref{kineticeq} the classical drift-diffusion scaling. 
If we assume further that $E=0$, then such limits were first investigated in 
the pioneering works \cite{Habetler1975}, \cite{Bensoussan1979}, \cite{Wigner1961} 
and \cite{Larsen1974}. In all these papers, it is assumed that $M$ is a Maxwellian distribution function;
In \cite{MR1803225}, Degond-Goudon-Poupaud extended these results to a more general distribution $M$, but always under the assumption of finite second moment.
The case $E\neq 0$ is addressed for example by Poupaud in \cite{poupaud1991diffusion}
when $M$ is a Maxwellian. It is shown in particular that the addition of the force field $E$ leads to a drift term in the limiting equation for the density of particles.

\medskip

The object of this paper is to investigate 
what happens when $M(v)$ has a so-called heavy tail distribution function with $\alpha<2$.
To be more precise, we will assume that 
$$ M(v)\sim \frac{\gamma}{ |v|^{d+\alpha} } \mbox{ as } |v|\to\infty$$
for some $\alpha<2$. The $\alpha$ describing the large velocity behavior of $M(v)$ is then the same as the $\alpha$ appearing in \eqref{kineticeq} (this is consistent with \eqref{def:alpha}).
When $E=0$, such limits  have been the object of several recent works  
(see for example \cite{MR2763032}, \cite{MR2815035}, and \cite{ben2011anomalous}),
and it  has been shown that the limiting behavior of $f_\eps$ is described by a fractional diffusion equation.

The main contribution of the paper is thus to consider the case $E\neq 0$.
In view of the scaling in equation \eqref{kineticeq}, we immediately note that the cases $\alpha\in(1,2)$, $\alpha=1$ and $\alpha\in(0,1)$ are radically different.
Indeed, when $\alpha\in(1,2)$, all the terms in the left hand side of  \eqref{kineticeq} are smaller than $\eps^{-1}$ when $\eps\ll 1$. So, assuming that $f_\eps$ converges to $f$ (for instance in $\D'$), we immediately get 
$ Q(f)=0$, that is 
$$ \lim_{\eps\to 0} f_\eps(x,v,t) = \rho(x,t) M(v).$$
By contrast, when $\alpha=1$, the force term is of the same order as the collision term, and we will get instead
$$ \lim_{\eps\to 0} f_\eps(x,v,t) = \rho(x,t) F(x,v,t)$$
where $F$ is the unique  solution of
\begin{equation}\label{eq:kernel0}
 Q(F)-E\cdot\nabla_vF = 0, \qquad \int_{\R^d} F\, dv =1,
 \end{equation}
(see Proposition \ref{pro:Fexis} below for the existence of $F$). Equation \eqref{eq:kernel0} classically appears in the high field asymptotic limit which has been studied for various operators $Q$ \cite{Arlotti,Po1992,BenChaker} (see also Remark \ref{rem:hf} below).
Finally, when $\alpha\in(0,1)$, the force term in the left hand side of \eqref{kineticeq} is more singular than the collision term, and the limit $f(x,v,t)$ of $f_\eps(x,v,t)$  satisfies
$$ E\cdot \nabla_v f = 0.$$
It is not clear to us what one could expect to prove in this last case. In fact, we will see that we are not able to obtain a priori estimates on $f_\eps$ to successfully  investigate such a limit (note however that $f_\eps$ is always bounded in $L^\infty(0,\infty; L^1(\R^d\times\R^d))$, so some limit always exists).
In this paper, we thus focus our attention on the two cases $\alpha\in(1,2)$ and $\alpha=1$. One of the key observations that allowed us to 
obtain the hydrodynamic limit in a rigorous manner is to note that not only the operator $Q$ appearing in \eqref{kineticeq} is coercive but also the operator 

\[
 \T ( f) := - Q ( f) + E \cdot \nabla_v f
\]
is coercive in a suitable space (see Proposition \ref{prop:coer}). Our proof is based on analytic methods.

We will show that the limit $f$ of $f_\eps$ is of the form $\rho(x,t) M(v)$ (or 
$\rho(x,t) F(x,v,t)$ when $\alpha=1$) where $\rho$ solves a fractional diffusion equation of order $\alpha$ with a drift term.
In that spirit, the first derivation of a fractional diffusion equation with an advection term 
starting from a kinetic model
was
first obtained in \cite{AceSch} as a macroscopic limit of an equation featuring a collision operator with
a biased velocity.
Note that 
evolution equations involving a fractional-diffusion term appear in many equations of mathematical physics 
(consult \cite{Vazquez2014857} and \cite{Shlesinger1995}, and the references therein),
for instance in fluid dynamics with the so-called quasi-geostrophic
flow model (see \cite{Constantin2006}) (in that case the equation is 
non linear  since the drift depends on the solution). 
The study of fractional-diffusion advection equations has been a very active field of research recently, and questions such as the regularity of
the solutions have been addressed, see for instance \cite{Silvestre2012} and \cite{SilVicolZlatos}.
It is a classical fact that the case of the half Laplacian ($\alpha=1$ with our notations) plays a critical role in that case since the diffusion operator has the same order as the advection term.
In that sense, it is not surprising that the case $\alpha=1$ plays a critical role in our study as well.

\medskip

\subsection{Assumptions}

We now list our main assumptions.
As noted above, the acceleration field $E(x,t)$ is assumed to be given (as opposed to, say, solution of Poisson equation), and satisfies
\begin{equation}\label{eq:E}
 E\in \left[W^{1,\infty} \left(\R^d \times (0,\infty) \right)\right]^d.
 \end{equation}
 
 Next,  we assume that $M$ satisfies:
\begin{align}
 &M>0,\quad M( v) = M( -v) \mbox{ for all } v \in \R^d, \quad \int_{\R^d} M(v) \d v = 1,\label{eqprop}\\
 & |v|^{d+\alpha} M(v) \longrightarrow \gamma>0  \,,\qquad\mbox{as } |v| \rightarrow \infty ,\quad \mbox{where } 1\leq \alpha<2,  \label{eqdecay} 
\end{align}
as well as the  following regularity assumptions:
\begin{equation} 
|D_v M(v)|\leq C\frac{M(v)}{1+|v|}, \quad 
|D_v^2 M(v) | \leq C M(v) . \label{derdecay2}  
\end{equation}
We note that these assumptions are compatible with the asymptotic behavior of $M$ given by \eqref{eqdecay}.
They are in particular satisfied by the function
$$ M(v)=\left(\frac{1}{1+|v|^2}\right)^{\frac{ d + \alpha}{2}}$$
and by the probability density function of the so-called $\alpha$-stable stochastic processes \cite{Bogdan+2007}.

\medskip

The cross section $\sigma(v,v')$ appearing in the operator $Q$ will be assumed to satisfy

\begin{align}
& \qquad 
 \sigma ( v, v') = \sigma ( v', v), \qquad   \nu_1 \leq \sigma ( v, v') \leq \nu_2,\quad \text{for all } v, v' \in \R^d\label{sigmaProp}\\
& \hspace{3cm} | \nabla_v \sigma ( v', v) | \leq \frac{ C}{ 1 + | v|}, \label{nuprop}
\end{align}
where $C$, $\nu_1$ and  $\nu_2$ are positive constants. Let us note that the symmetry condition \eqref{sigmaProp} on 
$\sigma$ guarantees that $Q(M)=0$. If we define the collision frequency $\nu(v)$ by  
\[
 \nu ( v) = \int \sigma ( v', v) M( v') \d v'
\]
then conditions \eqref{sigmaProp} and \eqref{nuprop} imply
\begin{equation}
\nu_1\leq\nu(v)\leq\nu_2,
\qquad  \left| \nabla_v \nu(v) \right| \leq \frac{C}{1+|v|}  \qquad \mbox{ for all } v\in \R^d.
  \label{nuprop2}
\end{equation}
In addition, we assume that the collision frequency $\nu$ is even, namely,

\begin{equation}\label{nueven}
 \nu ( v) = \nu ( -v) \qquad \mbox{ for all } v \in \R^d.
\end{equation}

Finally, we need $\sigma$ and $\nu$ to have a nice behavior as $v\to\infty$. More precisely, we assume:
\begin{equation}\label{eq:silim}
|\sigma(v,v') - \nu_0| \leq \frac{C}{1+|v|} \quad \mbox{ for all $v,v'\in \R^{2d}$ },
\end{equation} 
for some $\nu_0$,
which implies in particular 
  \begin{equation}\label{eq:nuasym}
 \nu ( v) \to \nu_0, \text{ as } | v| \to \infty. 
  \end{equation}
  
\medskip

\subsection{Main results}

Under assumptions \eqref{eq:E} and \eqref{sigmaProp},
the existence and uniqueness of a solution $f_\eps\in \mathcal{C}^0 ( [ 0, \infty); L^1 ( \R^d \times \R^d))$  to  \eqref{kineticeq}
can be proved via a semigroup argument. 
We do not discuss this issue here and refer instead the interested reader to \cite{poupaud1991diffusion} or \cite{MR1295030} for the existence
of a mild solution and to the Appendix of \cite{DiLi1989} where the equivalence between the mild solution and a solution in the 
sense of distributions is shown.

\medskip

In this paper  we investigate the asymptotic behavior of $f_\eps$ as $\eps\to 0$.
Our first result concerns the case $\alpha\in(1,2)$:
\begin{theorem}\label{thm:main}
Assume $\alpha\in(1,2)$ and let $f_\eps ( x, v, t)$ be the solution of \eqref{kineticeq} with initial condition 
 $f^{in}\geq 0$ satisfying 
 $$
f^{in}  \in L^2_{ M^{ -1}} ( \R^d \times \R^d) \cap L^1 ( \R^d \times \R^d).$$
Under Assumptions \eqref{eq:E}-\eqref{eq:silim} listed above,
the function $f_\eps(x,v,t)$ converges weakly  in  $\star$-$L^\infty ( 0, T; L^2_{ M^{ -1}} ( \R^d \times \R^d))$
to the function $\rho(x,t) M(v)$
where $\rho$ solves 
\begin{equation}\label{drifdifeq}
\left\{
\begin{array}{rcll}
\partial_t \rho + \kappa ( - \Delta)^{ \alpha/2} \rho + \nabla_x \cdot ( D E \rho ) & = & 0 & \text{in } ( 0 , \infty) \times \R^d , \\
\rho ( \cdot, 0)  & = & \rho^{in}             & \text{in } \R^d,
\end{array} \right.
\end{equation}
with $\rho^{ in} ( x) = \int f^{ in} ( x, v) \d v$ and with the
coefficient $\kappa$ and 
 matrix $D$  defined by 
 \begin{equation}\label{kapval} 
 \kappa =  \frac{ \gamma\nu_0^2}{ c_{ d, \alpha}} \int_0^\infty z^\alpha \e^{ -\nu_0z} \d z,
 \end{equation}
 and 
 \begin{equation}\label{eq:D}
D = \int \lambda(v)\otimes v\, dv, \qquad Q(\lambda)=\nabla_v M(v).
\end{equation}
\end{theorem}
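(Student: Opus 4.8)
My plan is to combine a weighted energy estimate, based on the coercivity of the collision-plus-force operator, with a moments method in which the limiting equation \eqref{drifdifeq} is read off by testing \eqref{kineticeq} against a well-chosen family of auxiliary functions. First I would multiply \eqref{kineticeq} by $f_\eps M^{-1}$ and integrate over $\R^d\times\R^d$. The transport term drops out since $M$ is $x$-independent, while the collision and force terms combine into $\eps^{-1}$ times the bilinear form of $-Q+\eps^{\alpha-1}E\cdot\nabla_v$, a vanishing perturbation of $-Q$ (a rescaled version of the operator $\T$ of Proposition~\ref{prop:coer}, which is legitimate here because $\alpha>1$ forces $\eps^{\alpha-1}\to0$). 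Using that coercivity and absorbing the lower-order contributions of $\nabla_v M$ via \eqref{derdecay2} and $\|E\|_{W^{1,\infty}}$ through Gronwall, I expect $\tfrac{\eps^{\alpha-1}}{2}\tfrac{d}{dt}\|f_\eps\|_{L^2_{M^{-1}}}^2+\tfrac{c}{\eps}\|h_\eps\|_{L^2_{M^{-1}}}^2\le 0$ up to controllable terms, where $h_\eps:=f_\eps-\rho_\eps M$ and $\rho_\eps=\int f_\eps\,dv$. This gives a uniform bound on $f_\eps$ in $L^\infty(0,T;L^2_{M^{-1}})$ and, crucially, the dissipation rate $\|h_\eps\|_{L^2(0,T;L^2_{M^{-1}})}=O(\eps^{\alpha/2})$.

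\textbf{Compactness and the moments identity.} From the uniform bound I extract a weak-$\star$ limit $f_\eps\rightharpoonup f$, and the dissipation bound forces $h_\eps\to0$, so $f=\rho M$ with $\rho_\eps\rightharpoonup\rho$; it remains to identify the equation for $\rho$. For $\phi\in C_c^\infty$ I introduce the auxiliary function $\chi_\eps(x,v,t)=\nu(v)\int_0^\infty e^{-\nu(v)s}\phi(t,x-\eps v s)\,ds$, which solves $\nu\chi_\eps+\eps\,v\cdot\nabla_x\chi_\eps=\nu\phi$. Testing $\eps$ times \eqref{kineticeq} against $\chi_\eps$ and integrating by parts yields the exact identity (integrals over $(0,\infty)\times\R^d\times\R^d$) $-\eps^{\alpha}\int f_\eps\,\partial_t\chi_\eps-\eps^{\alpha}\int f^{in}\chi_\eps|_{t=0}=\int f_\eps\,[\,Q^{*}(\chi_\eps)+\eps\,v\cdot\nabla_x\chi_\eps+\eps^{\alpha-1}E\cdot\nabla_v\chi_\eps\,]$, where $Q^*$ is the formal adjoint.

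\textbf{Extraction of the coefficients and conclusion.} The decisive algebraic point is that, by the symmetry of $\sigma$ and $Q(M)=0$, the velocity average of the collision-plus-transport bracket collapses: $\int M\,[Q^{*}(\chi_\eps)+\eps\,v\cdot\nabla_x\chi_\eps]\,dv=\int \nu M(\phi-\chi_\eps)\,dv$. Inserting $\chi_\eps-\phi=\nu\int_0^\infty e^{-\nu s}[\phi(x-\eps vs)-\phi(x)]\,ds$, using the heavy-tail behaviour $M\sim\gamma|v|^{-d-\alpha}$ from \eqref{eqdecay}, the substitution $w=\eps v$, and the singular-integral representation of $(-\Delta)^{\alpha/2}$, this average equals $-\eps^{\alpha}\kappa(-\Delta)^{\alpha/2}\phi+o(\eps^{\alpha})$, with $\kappa$ exactly as in \eqref{kapval} (the factor $\int_0^\infty z^\alpha e^{-\nu_0 z}\,dz$ arising from the $s$-integral and the two powers of $\nu_0$ from the $\nu^2$ left by the collapse). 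For the force contribution $\eps^{\alpha-1}E\cdot\nabla_v\chi_\eps$ I integrate by parts in $v$: the leading term vanishes since $\int\nabla_v M\,dv=0$, and the next order, which genuinely requires inverting the full operator $Q$ on $\nabla_v M$ rather than a relaxation approximation, produces the drift flux and the matrix $D$ of \eqref{eq:D}. Dividing by $\eps^{\alpha}$ and sending $\eps\to0$ (with $\chi_\eps\to\phi$ in the time and initial terms) gives the weak form of \eqref{drifdifeq}; uniqueness for \eqref{drifdifeq} then upgrades convergence to the full sequence.

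\textbf{Main obstacle.} The hard part will be the remainder $\int h_\eps\,[\,\cdot\,]$ coupling the fluctuation to the test function. A crude Cauchy--Schwarz bound is insufficient: one finds $\|\nu(\phi-\chi_\eps)\|_{L^2_M}=O(\eps^{\alpha/2})$, of exactly the same order as $\|h_\eps\|_{L^2_{M^{-1}}}$, so their product is $O(\eps^{\alpha})$, the very order of the limiting terms. Proving that this coupling is in fact $o(\eps^{\alpha})$ is the crux, and I expect it to require splitting the velocity integral into the bulk $|v|\lesssim1$, an intermediate zone, and the tail $|v|\gtrsim\eps^{-1}$, exploiting oddness and cancellations of the integrands together with the regularity \eqref{derdecay2}, and rewriting $\nu(\phi-\chi_\eps)=\eps\,v\cdot\nabla_x\chi_\eps$ to integrate by parts in $x$ and gain a power of $\eps$. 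Accommodating the genuine $x,t$-dependence of $E$ in the drift computation, which bends the characteristics defining $\chi_\eps$, is a secondary but nontrivial difficulty.
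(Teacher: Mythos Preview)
Your energy estimate does not close. Multiplying \eqref{kineticeq} by $f_\eps/M$, the force term contributes $\eps^{\alpha-2}\int (E\cdot\nabla_v f_\eps)\,f_\eps/M = -\tfrac{\eps^{\alpha-2}}{2}\int \tfrac{f_\eps^2}{M^2}\,E\cdot\nabla_v M$. Decomposing $f_\eps=\rho_\eps M+h_\eps$, the pure $\rho_\eps^2$ piece vanishes, but the cross term is of size $C\eps^{\alpha-2}\|\rho_\eps\|_{L^2}\|h_\eps\|_{L^2_{M^{-1}}}$; after Young's inequality and absorption into the dissipation $\eps^{-1}\nu_1\|h_\eps\|^2$, what remains is $C\eps^{2\alpha-3}\|\rho_\eps\|^2$, hence a Gronwall constant of order $\eps^{\alpha-2}\to\infty$. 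The paper notes explicitly (beginning of Section~2) that the $M^{-1}$--weighted estimate ``does not seem to be useful in the case $\alpha<2$'', and this is precisely why it introduces the modified equilibrium $F_\eps$ solving $\T_\eps(F_\eps)=0$: multiplying by $f_\eps/F_\eps$ makes the \emph{full} operator $\T_\eps$ coercive (Proposition~\ref{prop:coer}), and the only errors come from $\partial_tF_\eps$ and $v\cdot\nabla_xF_\eps$, which are $O(\eps^{\alpha-1})$ by Theorem~\ref{thm:FE}(iii) and yield a bounded Gronwall constant.

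There is a deeper reason your scheme cannot work: the bound $\|h_\eps\|_{L^2(0,T;L^2_{M^{-1}})}=O(\eps^{\alpha/2})$ is \emph{false}. Indeed $h_\eps = g_\eps + \rho_\eps(F_\eps-M)$ with $g_\eps=f_\eps-\rho_\eps F_\eps$, and by Proposition~\ref{prop:Fas} one has $F_\eps-M=\eps^{\alpha-1}E\cdot\lambda+O(\eps^{2(\alpha-1)})$, so generically $\|h_\eps\|\gtrsim\eps^{\alpha-1}$, which for $\alpha<2$ is \emph{larger} than $\eps^{\alpha/2}$. This is exactly your ``main obstacle'': the remainder $\int h_\eps[\,\cdot\,]$ is not $o(\eps^\alpha)$ because it carries the piece $\eps^{\alpha-1}\rho_\eps E\cdot\lambda$ that, paired with $\chi_\eps-\phi=O(\eps)$, produces an $O(\eps^\alpha)$ contribution to the limit. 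No amount of splitting or oddness will make it vanish---it \emph{is} the drift term. Your direct computation of the force term $\int M\,E\cdot\nabla_v\chi_\eps$ yields the coefficient $\int \nabla_vM\otimes v/\nu\,dv$, which is wrong unless $\sigma\equiv1$; the correct matrix $D=\int\lambda\otimes v\,dv$ appears only because $\lambda=Q^{-1}\nabla_vM$ is already encoded in $F_\eps$. The paper's route is to decompose $f_\eps=\rho_\eps F_\eps+g_\eps$, prove $\|g_\eps\|=O(\eps^{\alpha/2})$, and then expand $F_\eps$ inside the main term $\eps^{-\alpha}\int\rho_\eps\nu F_\eps(\chi_\eps-\phi)$: the $M$ part gives $-\kappa(-\Delta)^{\alpha/2}\phi$ and the $\eps^{\alpha-1}E\cdot\lambda$ part gives $-(DE)\cdot\nabla_x\phi$ (Proposition~\ref{prop:4} and Lemma~\ref{lem:lambda}).
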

Note that the constant $c_{ d, \alpha}$ appearing in \eqref{kapval}  is defined in \eqref{eq:fracla} and  that the existence of the function $\lambda(v)$ appearing in \eqref{eq:D} will be proved in Lemma \ref{lem:lambda0}.
When $\sigma(v,v')=1$, we can take $\lambda(v) = -\nabla_v M(v)$, and we can check that $D$ is the identity matrix.

\medskip

Next, we consider the critical case $\alpha=1$.
In that case, Equation \eqref{kineticeq} reads
$$
\left\{
\begin{array}{rcll}
 \partial_t f_\varepsilon + v \cdot \nabla_x f_\varepsilon + 
\frac{1}{\eps} E \cdot \nabla_v f_\varepsilon & = & \frac{1}{\eps}Q ( f_\varepsilon) & \text{in } ( 0 , \infty) \times \R^d \times \R^d, \\
f_\varepsilon ( \cdot, \cdot, 0)  & = & f^{in}             & \text{in } \R^d \times \R^d, 
\end{array} \right.
$$
and we recognize the so-called high field asymptotic for the Boltzmann equation.
Such asymptotics were first studied by Arlotti and Frosali \cite{Arlotti} and Poupaud  \cite{Po1992}
for the linear Boltzmann operator with Maxwellian equilibrium 
(see also Ben Aballah-Chaker \cite{BenChaker} for a non-linear collision operator).
The main difference in this case is that the weak limit of $f_\eps$ 
will be the solution $F$ of \eqref{eq:kernel0} (which depends on $E$) rather than $M(v)$.
The existence and properties of $F$ will be the object of Theorem \ref{thm:FE}
below.
In particular, we will prove that there exists a function $F(v,E)$ defined for $(v,E)\in \R^d\times\R^d$ such that for all $E\in\R^d$, $v\mapsto F(v,E)$ solves
\begin{equation} \label{eq:FFEE} Q(F)-E\cdot\nabla_vF = 0, \qquad \int_{\R^d} F(v,E)\, dv =1.
\end{equation}
We then have:
\begin{theorem}\label{thm:2}
Assume $\alpha=1$ and let $f_\eps ( x, v, t)$ be the solution of \eqref{kineticeq} with initial condition 
 $f^{in}\geq 0$ satisfying 
 $$
f^{in}  \in L^2_{ M^{ -1}} ( \R^d \times \R^d) \cap L^1 ( \R^d \times \R^d).$$
Under Assumptions \eqref{eq:E}-\eqref{eq:silim} listed above,
the solution $f_\eps(x,v,t)$ of \eqref{kineticeq} converges weakly  in  $\star$-$L^\infty ( 0, T; L^2_{ M^{ -1}} ( \R^d \times \R^d))$
to the function 
$\rho(x,t)  F(v,E(x,t))$
where $\rho(x,t)$ solves
\begin{equation}\label{drifdifeq2}
\left\{
\begin{array}{rcll}
\partial_t \rho + \kappa ( - \Delta)^{1/2} \rho + \mbox{div}_x (  \mu ( E) \rho ) & = & 0 & \text{in } ( 0 , \infty) \times \R^d \times \R^d, \\
\rho ( \cdot, 0)  & = & \rho^{in}             & \text{in } \R^d, 
\end{array} \right.
\end{equation}
where $\rho^{ in} ( x) = \int f^{ in} ( x, v) \d v$,
\begin{equation}\label{eq:mu}
 \mu( E) := \int v \left( F ( v, E) - M( v) \right) \d v,
\end{equation}
and
$$ \kappa =  \frac{ \gamma\nu_0^2}{ c_{ d, 1}} \int_0^\infty z \e^{ -\nu_0z} \d z.
$$
\end{theorem}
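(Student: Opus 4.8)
The plan is to follow the blueprint of the proof of Theorem~\ref{thm:main}, replacing the local equilibrium $M(v)$ by $F(v,E(x,t))$ and the drift $DE$ by $\mu(E)$, with the extra care demanded by the critical scaling $\alpha=1$, in which the diffusion and the advection are of the same order. Writing \eqref{kineticeq} in the form $\eps(\partial_t+v\cdot\nabla_x)f_\eps=-\mathcal{T}(f_\eps)$ with $\mathcal{T}=-Q+E\cdot\nabla_v$, the first step is to obtain the $\eps$-uniform a priori estimates. Testing this identity against $f_\eps/M$ (or $f_\eps/F$) and invoking the coercivity of $\mathcal{T}$ from Proposition~\ref{prop:coer}, together with the conservation of mass by $Q$ and the fact that $E$ does not depend on $v$, I would derive a uniform bound for $f_\eps$ in $L^\infty(0,T;L^2_{M^{-1}})$ and an $O(\eps)$ control on the deviation of $f_\eps$ from the kernel of $\mathcal{T}$. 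Weak-$\star$ compactness then yields a subsequence $f_\eps\rightharpoonup f$.

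Passing to the limit in $\eps(\partial_t+v\cdot\nabla_x)f_\eps=-\mathcal{T}(f_\eps)$ gives $\mathcal{T}(f)=0$; since Theorem~\ref{thm:FE} identifies the kernel of $\mathcal{T}$ with the multiples of $F(\cdot,E)$ and $\int F\,dv=1$, the limit is necessarily $f(x,v,t)=\rho(x,t)\,F(v,E(x,t))$ with $\rho=\int f\,dv$. It then remains to find the equation for $\rho$. Integrating \eqref{kineticeq} in $v$ and using $\int Q(f_\eps)\,dv=0$ and $\int E\cdot\nabla_v f_\eps\,dv=0$ produces the exact continuity equation $\partial_t\rho_\eps+\nabla_x\cdot J_\eps=0$, with $J_\eps=\int v f_\eps\,dv$, so that everything reduces to the identification of $\lim_\eps J_\eps$ in the sense of distributions in $(x,t)$.

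This flux is the heart of the matter, and the place where drift and diffusion, of the same order when $\alpha=1$, both appear. One cannot pass to the limit in $J_\eps$ directly, since the bound in $L^2_{M^{-1}}$ does not control $\int v f_\eps\,dv$ when $\alpha<2$ (as $v\notin L^2_M$). Instead I would use a Duhamel representation, writing the collisional equation as $\eps(\partial_t+v\cdot\nabla_x)f_\eps+E\cdot\nabla_v f_\eps+\nu f_\eps=M\int\sigma f_\eps\,dv'$ and integrating along the characteristics of $\eps\partial_t+\eps v\cdot\nabla_x+E\cdot\nabla_v$ with the exponential damping $e^{-\int_0^z\nu}$; this exhibits $f_\eps$ as a damped average of the gain term $M\int\sigma f_\eps\,dv'$, which after the change of variables $w=\eps z v$ concentrates on the heavy tail $\gamma|v|^{-d-1}$ of the equilibrium. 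The bulk of this average (velocities of order one), where $F$ differs from $M$, yields the drift flux $\mu(E)\rho$, since $\int vF\,dv=\mu(E)$ by \eqref{eq:mu} and $\int vM\,dv=0$ by \eqref{eqprop}. The contribution of the spatial variation of $\rho_\eps$, felt only through the tail — where \eqref{eq:silim} allows $\sigma$ and $\nu$ to be replaced by $\nu_0$, so that the gain reduces to $\nu_0\rho_\eps M$, and where the tail of $F$ coincides with $\gamma|v|^{-d-1}$ by Theorem~\ref{thm:FE} — produces in the limiting continuity equation the fractional diffusion $\kappa(-\Delta)^{1/2}\rho$. The substitution $w=\eps z v$ is exactly what turns the divergent second moment of $M$ into the nonlocal operator, the $z$-integral against $e^{-\nu_0 z}$ supplying the constant $\kappa$ once the resulting singular integral is recognized as $c_{d,1}^{-1}(-\Delta)^{1/2}$.

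The main obstacle is making this flux computation rigorous in the presence of the acceleration field. Three points demand work. First, the velocity is transported by $E$ along the characteristics, so the clean $E=0$ Duhamel formula must be replaced by one along curved characteristics, and one has to show that the force is a lower-order perturbation in the regime $|v|\sim 1/\eps$ governing the diffusion, so that the tail of $F$ still equals $\gamma|v|^{-d-1}$. Second, $E$, and hence $F(\cdot,E(x,t))$, depends on $x$: one must freeze $E$ at the base point and control the resulting error after the rescaling, using only the $W^{1,\infty}$ regularity \eqref{eq:E}. Third, since drift and diffusion are of the same order, the splitting of $J_\eps$ must isolate the two limits cleanly while the remainder vanishes. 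I would carry out the nonlocal passage to the limit in the weak formulation against a test function $\phi(x,t)$, identify the limiting equation with \eqref{drifdifeq2}, and finally use the uniqueness of its solution to promote the convergence from a subsequence to the whole family.
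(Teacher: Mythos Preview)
Your strategy diverges from the paper's and, as written, has a real gap at the flux step. For $\alpha=1$ the quantity $J_\eps=\int v f_\eps\,dv$ is not known to exist: the only a priori control is $f_\eps\in L^\infty_t L^2_{M^{-1}}$, and $v\notin L^2_M$ precisely when $\alpha\le 1$, so the continuity equation $\partial_t\rho_\eps+\nabla_x\!\cdot J_\eps=0$ is formal. The same problem resurfaces in your identification of the drift: you write ``$\int vF\,dv=\mu(E)$ by \eqref{eq:mu}'', but by definition $\mu(E)=\int v\bigl(F(v,E)-M(v)\bigr)\,dv$, which is finite only because $|F-M|\le C|E|\,M/(1+|v|)$ (Lemma~\ref{lem:mu}); neither $\int vF\,dv$ nor $\int vM\,dv$ is absolutely convergent when $\alpha=1$. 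Your bulk/tail decomposition therefore lacks a well-defined starting point, and the Duhamel representation you propose is along the full characteristics of $\eps\partial_t+\eps v\cdot\nabla_x+E\cdot\nabla_v$, which adds a layer of difficulty (curved trajectories, $x$-dependence of the shift) that you acknowledge but do not resolve.

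The paper sidesteps all of this by never forming $J_\eps$. It tests \eqref{kineticeq} against the bounded auxiliary function $\chi_\eps$ solving $\nu\chi_\eps-\eps v\cdot\nabla_x\chi_\eps=\nu\phi$, which involves only the free transport; the force is absorbed into the equilibrium $F=F(v,E(x,t))$ via the operator $\mathcal T$. This yields the weak formulation \eqref{weakform} (with $\alpha=1$), whose right-hand side vanishes by Proposition~\ref{prop:2}. The remaining term $\mathcal L^\eps(\phi)=\eps^{-1}\!\int\nu F(\chi_\eps-\phi)\,dv$ is split as $\eps^{-1}\!\int\nu M(\chi_\eps-\phi)\,dv$, which converges to $-\kappa(-\Delta)^{1/2}\phi$ by the $E=0$ analysis of \cite{MR2815035}, plus $\eps^{-1}\!\int\nu(F-M)(\chi_\eps-\phi)\,dv$, which converges to $\mu(E)\cdot\nabla_x\phi$ precisely because the extra decay of $F-M$ from Lemma~\ref{lem:mu} makes the first moment integrable. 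This splitting is the clean replacement for your bulk/tail argument, and it never requires integrating $v$ against $f_\eps$, $F$, or $M$ separately.
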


This result should be compared to the classical high-field limit (\cite{Arlotti,Po1992}), which leads to a transport equation.
Here the (fractional) diffusion takes place at the same time scale as the transport and thus appears in the limiting equation.

\medskip

Note that the fact that $\mu(E)$ is well defined  by formula \eqref{eq:mu} is not completely obvious since
$vM(v)$ is not integrable when $\alpha=1$. However, we will see in Lemma \ref{lem:mu} that $ F ( v, E) - M( v)$ decays faster than $M$ and that $\mu(E)$ is indeed well defined.

When $\sigma$ is constant, we can get explicit formulas for $F(v,E)$ and $E$.
Indeed, if $\sigma = 1$ then the operator $Q$ reads
$$ Q(f)(v) = \int_{\R^d } f(v')\, \d v'  M(v) - f(v)$$
and equation  \eqref{eq:kernel0} can be recast as
$$ F + E\cdot \nabla_v F  = M$$
which can be explicitly integrated along the characteristics yielding the following formula:
\begin{equation}\label{eq:FFE}
F(v,E)=\int_0^\infty \e^{ -z} M ( v - E z) \d z.
\end{equation}
We can also use the equation above to compute
$$ \mu(E)= - \int  v E\cdot \nabla_v F\,\d v = E 
 \qquad \text{for all } E\in\R^d$$
(using an integration by part and the fact that $\int F(v) \d v=1$).

\begin{remark}\label{rem:hf}
When $M$ satisfies \eqref{eqdecay} with
$\alpha\in(1,2)$, we can also consider the high field asymptotic regime as in \cite{Arlotti,Po1992}. 
It corresponds to the following scaling of the equation:
$$
\left\{
\begin{array}{rcll}
 \partial_t f_\varepsilon + v \cdot \nabla_x f_\varepsilon + 
\frac{1}{\eps} E \cdot \nabla_v f_\varepsilon & = & \frac{1}{\eps}Q ( f_\varepsilon) & \text{in } ( 0 , \infty) \times \R^d \times \R^d, \\
f_\varepsilon ( \cdot, \cdot, 0)  & = & f^{in}             & \text{in } \R^d \times \R^d, 
\end{array} \right.
$$
In that case, it is relatively easy to show that $f_\eps$ converges to
$\rho(x,t)  F(v,E(x,t))$
where $F$ is given by \eqref{eq:FFEE}
and $\rho$ solves the transport equation
$$ \dt \rho + \mbox{div}_x(\rho E) = 0 .$$
\end{remark}

\medskip

\begin{remark}
The case $\alpha=2$ is also interesting. In this case 
the scaling in equation \eqref{kineticeq} becomes the usual diffusion scaling, however, the second moment $\int |v|^2 M(v)\, dv $ (and thus the diffusion coefficient) is infinite.
This critical case was studied in \cite{MR2763032}, and it was shown that the time scale must be modified by a logarithmic factor, leading to the following equation:
$$ \eps  \ln(\eps^{-1})\partial_t f_\varepsilon + v \cdot \nabla_x f_\varepsilon + \ln(\eps^{-1})  E \cdot \nabla_v f_\varepsilon  =  \displaystyle \frac{1}{\eps}Q ( f_\varepsilon) .
$$
The limiting equation, on the other hand, will now involve the  regular Laplace operator.
\end{remark}

\subsection{Notations and organization of the paper}

We recall that the fractional Laplacian appearing in \eqref{drifdifeq} and \eqref{drifdifeq2} can be defined via the Fourier transform as
\begin{equation}\label{fracLaplacian}
\mathcal{F} \big( ( - \Delta)^{\alpha /2} f \big) ( k) : = |k|^\alpha \mathcal{ F} ( f) ( k),
\end{equation}

\noindent where $\mathcal{F} ( f)$ denotes the Fourier transform of $f$ and is defined as

\begin{equation}
 \mathcal{F} ( f) := \int e^{ - i k \cdot x} f ( x) \d x,
\end{equation}
or as a singular integral as

\begin{equation}\label{eq:fracla}
( - \Delta)^{ \alpha / 2} f( x) = c_{ d, \alpha} \, \, \text{P.V.} \int_{ \R^d} \frac{ f( x) - f( y)}{ | x - y|^{ d + \alpha}} \d y, 
\end{equation}

\noindent where P.V. denotes the Cauchy principal value and 

$$
c_{ d, \alpha} = \frac{\alpha 2^{ \alpha - 1} \varGamma ( \frac{ \alpha + N}{ 2})}{ \pi^{ N/2}\varGamma ( \frac{ 2 - \alpha}{ 2})}, 
$$

\noindent where $\varGamma ( x)$ is the Gamma function. 
When $\alpha > 1$,   the principal value can be avoided by 
using the following formula:
\[
 ( - \Delta)^{ \alpha / 2} f ( x) = c_{ d, \alpha} \int_{ \R^d} \frac{ f ( x) - f( y) - \nabla_x f ( x) ( x - y)}{ | x - y|^{ d + \alpha}} \d y \, .
\]
For a detailed discussion on the properties of the fractional Laplacian consult 
\cite{MR0350027}, \cite{stein1970singular}, or \cite{DiNezza2012521}.

\medskip


We denote by 
$\d x$, $\d v$ and $\d v'$ the Lebesgue measure on $\R^d$ and by $\d t$ the Lebesgue
measure on $[ 0, \infty )$, where $\R^d$ and $[ 0, \infty )$ will be the integration domains, respectively, unless stated otherwise.
We will denote by $L^2_{M^{-1}}(\R^d)$ (respectively $L^2_{F_\eps^{-1}}(\R^d)$) the space of square integrable function with weight $M^{-1}$ (respectively $F_\eps^{-1}$) equipped with the norm
$$ \| f\|_{L^2_{M^{-1}}(\R^d)} = \left( \int_{\R^d} |f(v)|^2 \frac{\d v}{M(v)}\right)^{1/2}.$$
\medskip

Finally, given a function $f \in L^1 \left( \R^d \right)$ we define the mass density $\rho_f$ of $f$ as
\begin{equation}\label{def:massden}
 \rho_f : = \int f \d v.
\end{equation}

\medskip

The rest of the paper is organized as follows:
In the next section, we prove the existence of $F$, solution of \eqref{eq:FFEE}, and we investigate its properties.
In Section 3, we will derive the a priori estimates on $f_\eps$ solution of \eqref{kineticeq} which will be necessary for the proofs of our main results.
Finally, Theorem \ref{thm:main} is proved in Section 4, while Theorem \ref{thm:2} is proved in Section 5.
\medskip

\section{The modified equilibrium function $F$}
Classically, a priori estimates for the solutions of \eqref{kineticeq}
 are obtained as consequence of the following coercivity property of the Boltzmann collision operator:
\begin{lemma}\label{lem:Q}
Under assumption \eqref{sigmaProp}, the operator $Q$ is a bounded operator in $L^2_{M^{-1}}(\R^d)$ 
which satisfies the following coercivity estimate:

$$ 
-\int_{\R^d} Q(f)f\frac{dv}{M(v)} \geq \nu_1 \int_{\R^d} |f - \rho_f M|^2\, \frac{\d v}{M(v)}, 
$$
for all $f\in L^2_{M^{-1}}(\R^d)$ and with $\rho_f$ given by \eqref{def:massden}.
\end{lemma}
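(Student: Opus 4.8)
The plan is to exploit the detailed-balance structure encoded in the symmetry $\sigma(v,v')=\sigma(v',v)$ in order to rewrite the quadratic form $-\int Q(f)f\,M^{-1}\,\d v$ as a symmetric (Dirichlet-type) form, from which the coercivity estimate will follow by a one-line variance computation. First I would use the definition of the collision frequency to split $Q$ as $Q(f)(v)=M(v)\int\sigma(v,v')f(v')\,\d v'-\nu(v)f(v)$, and I would record the convenient change of unknown $g:=f/M$, so that $\rho_f=\int Mg\,\d v$ and $\|f\|_{L^2_{M^{-1}}}^2=\int Mg^2\,\d v$.

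For the boundedness claim I would first observe that $L^2_{M^{-1}}(\R^d)\hookrightarrow L^1(\R^d)$: by Cauchy--Schwarz and $\int M\,\d v=1$ one has $\int|f|\,\d v\le\|f\|_{L^2_{M^{-1}}}$. Combining this with the upper bounds $\sigma\le\nu_2$ from \eqref{sigmaProp} and $\nu\le\nu_2$ from \eqref{nuprop2}, each of the two pieces of $Q$ is controlled in $L^2_{M^{-1}}$ by $\nu_2\|f\|_{L^2_{M^{-1}}}$. For the gain term one estimates $\int M\,\bigl(\int\sigma f\,\d v'\bigr)^2\,\d v\le\nu_2^2\bigl(\int|f|\,\d v\bigr)^2$, using $\int M\,\d v=1$; the loss term is immediate. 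This yields $\|Q(f)\|_{L^2_{M^{-1}}}\le 2\nu_2\|f\|_{L^2_{M^{-1}}}$.

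The heart of the proof is the symmetrization. Writing $-\int Q(f)f\,M^{-1}\,\d v$ as a double integral and symmetrizing in the variables $v\leftrightarrow v'$ (legitimate precisely because $\sigma$ is symmetric), I expect the gain and loss contributions to recombine into the manifestly nonnegative form
$$-\int_{\R^d} Q(f)f\,\frac{\d v}{M(v)}=\frac12\int_{\R^d}\int_{\R^d}\sigma(v,v')\,M(v)M(v')\,\bigl(g(v)-g(v')\bigr)^2\,\d v'\,\d v,$$
where $g=f/M$. Bounding $\sigma\ge\nu_1$ from below and expanding the square leaves a variance: using $\int M\,\d v=1$,
$$\frac{\nu_1}{2}\int_{\R^d}\int_{\R^d} M(v)M(v')\bigl(g(v)-g(v')\bigr)^2\,\d v'\,\d v=\nu_1\Bigl(\int Mg^2\,\d v-\bigl(\textstyle\int Mg\,\d v\bigr)^2\Bigr)=\nu_1\Bigl(\|f\|_{L^2_{M^{-1}}}^2-\rho_f^2\Bigr).$$
I would then close the estimate with the elementary identity $\int|f-\rho_f M|^2\,M^{-1}\,\d v=\int f^2M^{-1}\,\d v-\rho_f^2$, obtained by expanding the square and using $\int f\,\d v=\rho_f$ together with $\int M\,\d v=1$, which identifies the right-hand side with $\nu_1\int|f-\rho_f M|^2\,M^{-1}\,\d v$.

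The only delicate point is the symmetrization step itself: one must keep careful track of which term carries which Maxwellian weight so that, after the substitution $g=f/M$, the three contributions assemble exactly into the perfect square $M(v)M(v')\bigl(g(v)-g(v')\bigr)^2$, with no residual cross term surviving. Everything else reduces to Cauchy--Schwarz and the algebraic variance identity.
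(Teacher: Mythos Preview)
Your argument is correct. The symmetrization identity and the variance computation are exactly right, and the boundedness argument via the embedding $L^2_{M^{-1}}\hookrightarrow L^1$ is clean.

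Note, however, that the paper does not actually prove this lemma: it is stated as ``very classical'' and left without proof. That said, your approach is precisely the one the paper employs later for the more general operator $\T=-Q+E\cdot\nabla_v$ in Proposition~\ref{prop:coer}: there the authors carry out the same symmetrization in $v\leftrightarrow v'$ to obtain
$$\int \T(f)\frac{f}{F}\,\d v=\frac12\iint \sigma(v,v')\,M\,F'\Bigl(\frac{f}{F}-\frac{f'}{F'}\Bigr)^2\d v'\,\d v,$$
and then conclude via the decomposition $f=\rho_f F+g$ with $\int g\,\d v=0$. Your proof is the $E=0$ specialization of that computation (with $F=M$), so it is fully in line with the paper's methods.
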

When $E=0$, this very classical lemma immediately implies that the solution of \eqref{kineticeq} satisfies
$$ f_\eps(x,v,t) = \rho_\eps(x,t) M(v) + \eps^{\alpha/2}r_\eps(x,v,t)$$
where the remainder term $r_\eps$ is bounded in some appropriate functional space (such a bound is obtained by multiplying \eqref{kineticeq} by $f_\eps/M$ and integrating).
Such estimates can be generalized to include the case $E\neq 0$ and $\alpha=2$. Unfortunately, these computations do not seem to be useful in the case $\alpha<2$ which we are considering here.

In the next section,
we will see that we can instead obtain the following expansion for $f_\eps$:
$$ f_\eps(x,v,t) = \rho_\eps(x,t) F_\eps(x,v,t) + \eps^{\alpha/2}r_\eps(x,v,t)$$
where $F_\eps$ is the normalized equilibrium function solution of 
 \begin{equation}\label{kernel}
   \eps^{ \alpha - 1} E \cdot \nabla_v F_\eps - Q ( F_\eps) = 0 \, , \qquad \int F_\eps \d v = 1.
\end{equation}
Our  goal in this section is to prove the existence and uniqueness of $F_\eps$ and study its properties.

\medskip

But first we note that we can write 
$$F_\eps(x,v,t) = F(v,\eps^{\alpha-1} E(x,t))$$ 
where the function $v\mapsto F(v,E)$ solves (for all $E\in\R^d$):
\begin{equation}\label{eq:kernelF}
 E\cdot \nabla_v F - Q(F)=0 , \qquad \int_{\R^d} F(v,E)\, dv =1. 
\end{equation}
This equation plays a central role in the study of the  high field asymptotics for Boltzmann type equations, 
and has been studied for various operators $Q$.
However, it does not seem that it has been studied under our assumptions on the function $M(v)$ (property \eqref{pEF} below, in particular, is very specific to our framework).
We will thus study \eqref{eq:kernelF} in detail in this section.
More precisely, gathering all the key results that we will  prove in this section, we have the following:
\begin{theorem}\label{thm:FE}
\item[(i)] For all $E\in\R^d$, the exists a unique function $v\mapsto F(v,E)$ solution of \eqref{eq:kernelF}.
\item[(ii)] There exist two positive constants $C(R)$ and $c(R)$ such that if $|E|\leq R$ then
$$
 c(R) M(v)\leq F(v,E)\leq C(R) M(v) \qquad \text{for all } v\in \R^d.
$$
\item[(iii)] The function $E\mapsto F(v,E)$ is $C^1$ and for all $R>0$ there exists $C(R)$ such that
   \begin{equation}\label{pEF}
   | \partial_E F(v,E)| \leq C(R) \frac{F(v,E)}{1+|v|}\quad  \mbox{ for all $v\in\R^d$ and  $|E|\leq R$}.
   \end{equation}
\end{theorem}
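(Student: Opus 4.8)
The plan is to recast \eqref{eq:kernelF} as a linear transport equation whose source is the gain part of $Q$, and to combine the coercivity of $\T$ with an explicit integration along characteristics. Writing $Q(f)=M(v)\,S(f)-\nu(v)f$ with $S(f)(v):=\int\sigma(v,v')f(v')\,\d v'$ and $\nu$ as in \eqref{nuprop2}, equation \eqref{eq:kernelF} becomes $L_E F:=E\cdot\nabla_v F+\nu(v)F=M(v)\,S(F)$. Since $E$ is a fixed vector, the characteristics of $L_E$ are the lines $z\mapsto v-Ez$ and $L_E$ is invertible through the Duhamel formula
$$L_E^{-1}g(v)=\int_0^\infty \e^{-\int_0^z\nu(v-E\tau)\,\d\tau}\,g(v-Ez)\,\d z,$$
the integral converging because $\nu\ge\nu_1>0$. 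Thus every solution satisfies $F=L_E^{-1}(M\,S(F))$, which in particular forces $F\ge0$. For existence in (i) I would solve the regularized resolvent problem $\delta F_\delta+\T F_\delta=\delta M$ by Lax--Milgram, using the coercivity of Proposition \ref{prop:coer}; integrating in $v$ (the transport term integrates to zero and $\int Q(\cdot)\,\d v=0$ by the symmetry \eqref{sigmaProp}) gives $\int F_\delta\,\d v=1$, and passing to the limit $\delta\to0$ with the uniform bounds of (ii) produces a unit-mass solution of $\T F=0$ (alternatively Krein--Rutman applies to the positive operator $F\mapsto L_E^{-1}(M\,S(F))$). Uniqueness is immediate from coercivity: if $F_1,F_2$ both solve \eqref{eq:kernelF} then $G=F_1-F_2$ has zero mass and $\T G=0$, so $0=\langle\T G,G\rangle\ge c\|G\|^2$ and $G=0$.

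Part (ii) is where the tail \eqref{eqdecay} of $M$ enters. From $\int F\,\d v=1$ and $\nu_1\le\sigma\le\nu_2$ one gets $\nu_1\le S(F)\le\nu_2$, while $\nu_1 z\le\int_0^z\nu(v-E\tau)\,\d\tau\le\nu_2 z$; inserting these into the Duhamel formula reduces both inequalities to the single comparison
$$c(R)\,M(v)\le\int_0^\infty\e^{-\nu_2 z}M(v-Ez)\,\d z,\qquad \int_0^\infty\e^{-\nu_1 z}M(v-Ez)\,\d z\le C(R)\,M(v),$$
uniformly for $|E|\le R$. The lower bound follows by keeping only $z\in[0,1]$, on which $M(v-Ez)\ge c(R)M(v)$ thanks to \eqref{eqdecay}. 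The upper bound is the crux: splitting the integral at $z=\tfrac12|v|/R$, on the near part $|v-Ez|$ is comparable to $|v|$ so $M(v-Ez)\lesssim M(v)$, whereas on the far part $M(v-Ez)=O(1)$ but $\e^{-\nu_1 z}\le\e^{-\nu_1|v|/(2R)}$ decays faster than any power and is therefore $\lesssim M(v)\sim|v|^{-d-\alpha}$.

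Part (iii) couples differentiation of the equation with the same machinery, and needs as a preliminary a velocity-gradient bound $|\nabla_v F|\le C(R)\,M/(1+|v|)$. This I obtain by differentiating $L_E F=M\,S(F)$ in $v_k$: using \eqref{derdecay2}, \eqref{nuprop2} and \eqref{nuprop} every term on the right is $\lesssim M/(1+|v|)$ (the $S$-term because $|\nabla_v\sigma|\le C/(1+|v|)$ and $\int F=1$), so the Duhamel formula together with a weighted version of the estimate in (ii), namely $\int_0^\infty\e^{-\nu_1 z}\tfrac{M(v-Ez)}{1+|v-Ez|}\,\d z\le C(R)\tfrac{M(v)}{1+|v|}$, propagates the weight. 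Next, differentiating \eqref{eq:kernelF} in $E_j$ shows (via a difference-quotient argument for rigor) that $G:=\partial_{E_j}F$ solves $\T G=-\partial_{v_j}F$ with $\int G\,\d v=0$; the compatibility $\int\partial_{v_j}F\,\d v=0$ holds and coercivity yields existence, uniqueness and hence $C^1$ dependence on $E$. Finally, writing this as $L_E G=M\,S(G)-\partial_{v_j}F$ and noting the key point that $\int G=0$ combined with \eqref{eq:silim} gives $|S(G)(v)|=|\int(\sigma(v,v')-\nu_0)G(v')\,\d v'|\le\tfrac{C}{1+|v|}\|G\|_{L^1}$, both source terms carry the weight $1/(1+|v|)$; feeding them once more through the Duhamel formula and the weighted estimate gives $|\partial_{E_j}F|\le C(R)\,M/(1+|v|)$, which by (ii) is $\le C(R)\,F/(1+|v|)$, i.e. \eqref{pEF}.

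The main obstacle is this whole family of weighted pointwise estimates, and above all the comparison $\int_0^\infty\e^{-\nu_1 z}M(v-Ez)\,\d z\le C(R)M(v)$ together with its weighted analogue: this is precisely where the heavy-tail decay \eqref{eqdecay} is indispensable, being slow enough that the exponential relaxation along characteristics always dominates the region where $v-Ez$ is small, yet algebraic enough that $M(v-Ez)$ stays comparable to $M(v)$ on the bulk of the range. The second delicate point, specific to this framework, is that the gain term does not improve decay by itself; it is only after combining the zero mass of $\partial_E F$ with the asymptotic flatness \eqref{eq:silim} of $\sigma$ that $S(\partial_E F)$ acquires the extra factor $1/(1+|v|)$ needed for \eqref{pEF}. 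A final subtlety, already absorbed into Proposition \ref{prop:coer}, is that the cross term $\int(E\cdot\nabla_v f)f\,M^{-1}\,\d v$ is not sign-definite, so the coercivity of $\T$ must be measured in an appropriately weighted norm.
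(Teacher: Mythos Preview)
Your proposal follows essentially the same route as the paper: Krein--Rutman for existence, the Duhamel formula for $L_E^{-1}$ together with the comparison $\int_0^\infty \e^{-\nu_1 z}M(v-Ez)\,\d z\le C(R)M(v)$ (split at $z=|v|/(2R)$, exactly as in the paper's Lemma \ref{definqbo}) for part (ii), and for part (iii) the chain of first differentiating in $v$ (your preliminary bound is the paper's Lemma \ref{lem:2}), then in $E$, using zero mass of $\partial_E F$ combined with \eqref{eq:silim} to extract the extra factor $1/(1+|v|)$ from the gain term, and feeding the result back through $L_E^{-1}$ with the weighted version of the comparison estimate.

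One caveat on part (i): your primary existence argument via Lax--Milgram and Proposition \ref{prop:coer} is circular, since that proposition's coercivity is stated in the weighted space $L^2_{F^{-1}}$ and therefore already presupposes the existence of $F$. The Krein--Rutman alternative you mention parenthetically is precisely the paper's approach (applied to $\K\circ\A^{-1}$ on $L^2_{M^{-1}}$, which is equivalent to your $L_E^{-1}(M\,S(\cdot))$) and avoids this circularity; the paper also obtains uniqueness from the simplicity of the Krein--Rutman eigenvalue, though your coercivity argument for uniqueness is perfectly valid once one $F$ is in hand to serve as the weight.
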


Since we are assuming $\alpha\geq1 $, assumption \eqref{eq:E} implies that $|\eps^{\alpha-1} E(x,t)| $
is bounded uniformly in $\eps$, $x$ and $t$, and so 
the results of this theorem will apply to the function $F_\eps(x,v,t) = F(v,\eps^{\alpha-1} E(x,t))$ 
(see Propositions \ref{prop:FMbd} and  \ref{FpropGe}).
When $\alpha>1$, 
the behavior of $F(v,E)$ for $|E|\ll 1$ will play an important role.
We will thus prove the following result:
\begin{proposition}\label{prop:Fas}
The following expansion holds:
\begin{equation} \label{eq:FGE}
F (v,E) = M(v) + E\cdot \lambda(v) + G(v,E)
\end{equation}
where $\lambda(v)$ is such that 
$$ Q(\lambda)(v) =\nabla_v M(v) , \qquad \int_{\R^d} \lambda(v) \, dv =0,$$
and $G$ satisfies:
\begin{equation}\label{eq:GL22}
 \|G(\cdot,E)\|_{L^2_{M^{-1}} \left( \R^d \right) } \leq C  | E|^2 \quad \mbox{ for all  $|E|\leq 1$}
 \end{equation}
and 
\begin{equation}\label{eq:GLinfty2} 
| G(\cdot,E)| \leq  C  | E|^2 M(v) \quad \mbox{ for all $v\in\R^d$, $|E|\leq 1$.}
\end{equation}
\end{proposition}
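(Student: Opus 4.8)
The plan is to treat \eqref{eq:kernelF} as a perturbation of the equilibrium problem at $E=0$, where $F(\cdot,0)=M$. Let $\lambda$ be the solution of $Q(\lambda)=\nabla_v M$, $\int \lambda\, dv=0$ provided by Lemma \ref{lem:lambda0}, and set $G:=F-M-E\cdot\lambda$. Since $\int F\,dv=1$, $\int M\,dv=1$ and $\int\lambda\,dv=0$, we get $\int G\,dv=0$. Inserting the decomposition into \eqref{eq:kernelF} and using $Q(M)=0$ together with $Q(E\cdot\lambda)=E\cdot Q(\lambda)=E\cdot\nabla_v M$, all first-order terms cancel and $G$ solves
$$-Q(G)+E\cdot\nabla_v G = S,\qquad S:=-E\cdot\nabla_v(E\cdot\lambda),$$
a linear equation whose right-hand side is quadratic in $E$. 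Before estimating $G$, I would record pointwise decay of $\lambda$ and its gradient: from the representation $\lambda=\nu^{-1}\big(M\int\sigma(\cdot,v')\lambda(v')\,dv'-\nabla_v M\big)$, the lower bound $\nu\geq\nu_1$, the boundedness of $\sigma$, and the regularity hypotheses \eqref{derdecay2}, \eqref{nuprop}, \eqref{nuprop2}, one checks $|\lambda(v)|\leq C M(v)$ and $|\nabla_v\lambda(v)|\leq C M(v)$. In particular $|S|\leq |E|^2|\nabla_v\lambda|\leq C|E|^2 M$ and $\|S\|_{L^2_{M^{-1}}}\leq C|E|^2$.

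For the bound \eqref{eq:GL22} I would run the energy estimate underlying the coercivity of $\T$ (Proposition \ref{prop:coer}). Multiplying the equation for $G$ by $G/M$ and integrating, Lemma \ref{lem:Q} gives $\int(-Q(G))G/M\geq\nu_1\|G\|^2_{L^2_{M^{-1}}}$ because $\rho_G=\int G\,dv=0$, while the transport term, after one integration by parts and the estimate $|E\cdot\nabla_v M|\leq C|E|M/(1+|v|)$, is controlled by $C|E|\,\|G\|^2_{L^2_{M^{-1}}}$. Absorbing this term for $|E|$ small and applying Cauchy--Schwarz to $\int S\,G/M$ yields $\|G\|_{L^2_{M^{-1}}}\leq C|E|^2$ for $|E|\leq\nu_1/(2C)$; on the remaining range $\nu_1/(2C)\leq|E|\leq1$ the bound follows from the crude estimate $\|G\|_{L^2_{M^{-1}}}\leq\|F\|_{L^2_{M^{-1}}}+1+|E|\,\|\lambda\|_{L^2_{M^{-1}}}\leq C$ (using $F\leq C M$ from Theorem \ref{thm:FE}(ii)) together with $|E|^2\geq(\nu_1/2C)^2$.

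For the pointwise bound \eqref{eq:GLinfty2} I would rewrite the equation as a transport equation with absorption,
$$\big(\nu(v)+E\cdot\nabla_v\big)G = S + M\int\sigma(\cdot,v')G(v')\,dv' =: h.$$
Since $\int|G|\,dv\leq\|G\|_{L^2_{M^{-1}}}\big(\int M\,dv\big)^{1/2}=\|G\|_{L^2_{M^{-1}}}\leq C|E|^2$, the bounds $|S|\leq C|E|^2 M$ and $\sigma\leq\nu_2$ give $|h|\leq C|E|^2 M$. As $F$, $M$ and $\lambda$ all decay like $M$, $G$ is the unique decaying solution, hence is represented by the Duhamel formula along the characteristics $z\mapsto v-zE$, so that
$$|G(v)|\leq\int_0^\infty |h(v-zE)|\,e^{-\int_0^z\nu(v-wE)\,dw}\,dz\leq C|E|^2\int_0^\infty M(v-zE)\,e^{-\nu_1 z}\,dz.$$
The crux is then the kernel estimate $\int_0^\infty M(v-zE)\,e^{-\nu_1 z}\,dz\leq C M(v)$, uniform for $|E|\leq1$, which I would prove exactly as in Theorem \ref{thm:FE}(ii): for $z\leq|v|/(2|E|)$ one has $|v-zE|\geq|v|/2$, whence $M(v-zE)\leq C M(v)$ by \eqref{eqdecay}, while for $z\geq|v|/(2|E|)$ the factor $e^{-\nu_1 z}\leq e^{-\nu_1|v|/2}$ beats the polynomial tail of $M(v)$. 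This gives \eqref{eq:GLinfty2}.

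I expect the pointwise estimate to be the main obstacle: it is where the heavy-tail structure \eqref{eqdecay} is genuinely needed, and the characteristic comparison of $M(v-zE)$ with $M(v)$ must be carried out carefully and uniformly in $|E|\leq1$. The decay estimate $|\nabla_v\lambda|\leq CM$, which feeds both bounds through $S$, is the other place where the regularity hypotheses on $M$, $\sigma$ and $\nu$ are essential.
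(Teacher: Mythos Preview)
Your argument is correct and follows the same overall architecture as the paper: derive the equation $\T(G)=-E\cdot(E\cdot\nabla_v\lambda)$, obtain an $L^2$ bound by an energy estimate exploiting $\int G\,dv=0$, and then upgrade to the pointwise bound by writing $\nu G+E\cdot\nabla_v G=\K(G)+S$ and invoking the characteristic estimate of Lemma~\ref{definqbo} (your kernel bound $\int_0^\infty e^{-\nu_1 z}M(v-zE)\,dz\le C M(v)$ is exactly that lemma).

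The one genuine difference is in the $L^2$ step. The paper works in $L^2_{F^{-1}}$ and uses the coercivity of the full operator $\T$ (Proposition~\ref{prop:coer}), which absorbs the transport term directly and gives $\|G\|_{L^2_{F^{-1}}}\le C|E|^2$ in one stroke for all $|E|\le 1$. You instead work in $L^2_{M^{-1}}$, use only the coercivity of $Q$ (Lemma~\ref{lem:Q}), and treat the transport contribution $\tfrac12\int (G^2/M)\,E\cdot\nabla_v M/M$ as a perturbation of size $C|E|\,\|G\|^2_{L^2_{M^{-1}}}$; this forces the two-regime split (absorb for $|E|$ small, crude bound for $|E|\sim 1$). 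Your route is slightly more elementary in that it does not require Proposition~\ref{prop:coer}, while the paper's route is cleaner and avoids the case distinction. Both are valid.
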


\subsection{Existence of $F(v,E)$}
In this Section, we prove the existence of a unique solution to \eqref{eq:kernelF} (Theorem \ref{thm:FE} $(i)$). The proof  follows closely the arguments of Poupaud in \cite{Po1992}. We recall it here for the sake of completeness.
Throughout this section, we fix $E\in \R^d$ and we define the operator
\begin{equation}\label{eq:TE}
 \T ( f) : = - Q ( f) + E \cdot \nabla_vf .
 \end{equation}
 We also define the operators $\A$ and  $\K$ by 
  \[
   \A ( f) :=   E \cdot \nabla_v f + \nu f, \qquad \K ( f) := \int \sigma ( v, v') f( v') \d v' M( v)
  \] 
so that $\T= \A -\K$.
We note that $\K$ is a positive compact operator in 
  $ L^2_{ M^{ -1}} \big( \R^d \big) $ (it is a Hilbert-Schmidt operator),
  while $\A$ is an unbounded operator with domain
  \begin{align}
   D ( \A) & := \Big\{ f \in L^2_{ M^{ -1}} \big( \R^d \big) \, | \,   E \cdot \nabla_v f   \in L^2_{ M^{ -1}} \big( \R^d \big) \Big\}. \label{basDom} 
  \end{align}
Furthermore, we can define the inverse operator
$\A^{ -1} $ as follows:
   \begin{equation}\label{invfor}
   \A^{ -1}  ( h) : = \int_0^\infty \e^{ - \int_0^s \nu ( v -  E \tau) \d \tau}  h ( v -   E s) \d s.
  \end{equation}
Indeed, we have:
\begin{lemma}\label{Ainv}
The operator $\A^{ -1}$ defined by \eqref{invfor} is a bounded operator in $L^2_{ M^{ -1}}$
(with a norm depending on $|E|$)
 which satisfies 
$$( \A \circ \A^{ -1} ) ( f) = f \quad \mbox{ for all $f \in L^2_{ M^{ -1}} \left( \R^d \right)$},
$$
and
$$
( \A^{ -1} \circ \A ) ( f) = f \quad \mbox{ for all $f \in D ( \A)$}.
$$
\end{lemma}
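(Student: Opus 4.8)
The plan is to regard $\A^{-1}$ as the solution operator of the first-order transport equation $\A f = E\cdot\nabla_v f + \nu f = h$, obtained by integrating along the characteristics $s\mapsto v-Es$; formula \eqref{invfor} is precisely the Duhamel representation of this solution with integrating factor $\Phi(v,s):=\e^{-\int_0^s\nu(v-E\tau)\d\tau}$. I would then establish the three assertions — boundedness, $\A\circ\A^{-1}=\mathrm{Id}$ on $L^2_{M^{-1}}$, and $\A^{-1}\circ\A=\mathrm{Id}$ on $D(\A)$ — in that order.

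First I would prove the boundedness, which I expect to be the main analytic point. Since $\nu\geq\nu_1>0$ by \eqref{nuprop2}, the weight satisfies $\Phi(v,s)\leq \e^{-\nu_1 s}$, so Cauchy--Schwarz in $s$ gives $|\A^{-1}(h)(v)|^2\leq \nu_1^{-1}\int_0^\infty \e^{-\nu_1 s}|h(v-Es)|^2\d s$. Integrating against $M(v)^{-1}\d v$, applying Fubini and the change of variables $w=v-Es$ reduces everything to controlling the ratio $M(w)/M(w+Es)$. Here the heavy-tail assumption \eqref{eqdecay}, together with positivity and continuity of $M$, yields two-sided bounds $c_1(1+|v|)^{-(d+\alpha)}\leq M(v)\leq c_2(1+|v|)^{-(d+\alpha)}$, from which $M(w)/M(w+Es)\leq C(1+|E|s)^{d+\alpha}$ uniformly in $w$. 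The surviving factor $\int_0^\infty \e^{-\nu_1 s}(1+|E|s)^{d+\alpha}\d s$ is finite and depends only on $|E|,\nu_1,d,\alpha$, giving $\|\A^{-1}(h)\|_{L^2_{M^{-1}}}\leq C(|E|)\|h\|_{L^2_{M^{-1}}}$.

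Next I would verify the inverse relations using the cocycle identity $\Phi(v,s+r)=\Phi(v,r)\Phi(v-Er,s)$, which follows from additivity of the integral in the exponent. For $\A\circ\A^{-1}=\mathrm{Id}$, this identity lets me write $\Phi(v,r)(\A^{-1}h)(v-Er)=\int_r^\infty\Phi(v,u)h(v-Eu)\d u$; differentiating in $r$ at $r=0$ and using $\partial_r\Phi(v,r)|_{r=0}=-\nu(v)$ produces exactly $E\cdot\nabla_v(\A^{-1}h)+\nu\,\A^{-1}h=h$, and since the right side lies in $L^2_{M^{-1}}$ this also shows $\A^{-1}h\in D(\A)$. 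For $\A^{-1}\circ\A=\mathrm{Id}$, I would substitute $h=\A f$ into \eqref{invfor} and recognize the integrand as a perfect $s$-derivative, $\Phi(v,s)\big[E\cdot\nabla_v f+\nu f\big](v-Es)=-\frac{d}{ds}\big[\Phi(v,s)f(v-Es)\big]$, so that the integral telescopes to $f(v)$ minus a boundary term at $s=\infty$.

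The one technical point to treat carefully is the vanishing of this boundary term $\lim_{s\to\infty}\Phi(v,s)f(v-Es)$. Rather than argue pointwise, I would control it in $L^2_{M^{-1}}$: the same weight estimate as above gives $\|\Phi(\cdot,s)f(\cdot-Es)\|_{L^2_{M^{-1}}}^2\leq C\,\e^{-2\nu_1 s}(1+|E|s)^{d+\alpha}\|f\|_{L^2_{M^{-1}}}^2\to 0$, so the boundary term is zero a.e.\ (along a subsequence) and the telescoping identity yields $\A^{-1}(\A f)=f$. The genuine obstacle throughout is the same in all three parts: the competition between the exponential decay coming from $\nu\geq\nu_1$ and the polynomial growth of $M^{-1}$ along the translation $v\mapsto v-Es$. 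Assumption \eqref{eqdecay} is exactly what keeps this growth polynomial, hence dominated by the exponential, and is what forces the operator norm of $\A^{-1}$ to depend on $|E|$.
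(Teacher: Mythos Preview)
Your proposal is correct and follows essentially the same route as the paper: the boundedness argument is identical (Cauchy--Schwarz in $s$, change of variables, then the two-sided polynomial bound on $M$ to control $M(w)/M(w+Es)$ by $C(1+|E|s)^{d+\alpha}$, which is integrable against $\e^{-\nu_1 s}$). For the inverse identities the paper simply cites Proposition~1 of Poupaud \cite{Po1992}, whereas you spell out the characteristic/cocycle computation directly; your treatment of the boundary term at $s=\infty$ via its $L^2_{M^{-1}}$ decay is the natural way to make that step rigorous.
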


Postponing the proof of this Lemma to the end of this section, 
we first show that it implies the main result of this section:

\begin{proposition}\label{pro:Fexis}
   For all $E \in\R^d$,  
   there exists a unique positive solution $v\mapsto F(v,E)$  of  \eqref{eq:kernelF}
   in  $L^2_{M^{-1}}(\R^d)$.
\end{proposition}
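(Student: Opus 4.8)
The plan is to recast \eqref{eq:kernelF} as an eigenvalue problem for a compact, positive operator and to extract the solution from the Krein--Rutman theorem. Since $\A$ is invertible by Lemma \ref{Ainv}, the equation $\T(F)=0$, i.e. $\A(F)=\K(F)$, is equivalent to $F=\mathcal{B}(F)$ where $\mathcal{B}:=\A^{-1}\K$. Thus it suffices to show that $1$ is an eigenvalue of $\mathcal{B}$ possessing a positive eigenfunction that is unique up to scaling; the normalization $\int F\,\d v=1$ then selects $F(\cdot,E)$.

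First I would record the structural properties of $\mathcal{B}$ on $L^2_{M^{-1}}(\R^d)$. The operator $\K$ is compact (it is Hilbert--Schmidt under \eqref{sigmaProp}) and $\A^{-1}$ is bounded by Lemma \ref{Ainv}, so $\mathcal{B}=\A^{-1}\K$ is compact. Moreover $\mathcal{B}$ is positivity improving: if $f\geq 0$ and $f\not\equiv 0$, then $\K(f)(w)=\big(\int \sigma(w,v')f(v')\,\d v'\big)M(w)\geq \nu_1\|f\|_{L^1}M(w)>0$ by \eqref{sigmaProp}, and the explicit formula \eqref{invfor}, whose integrand is nonnegative, then gives $\mathcal{B}(f)=\A^{-1}(\K f)>0$ everywhere. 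Compactness together with this strong positivity lets me invoke the Krein--Rutman theorem: the spectral radius $\lambda_0:=r(\mathcal{B})>0$ is a simple eigenvalue of $\mathcal{B}$, and it is the only eigenvalue admitting a positive eigenfunction $F_0>0$.

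It then remains to prove $\lambda_0=1$. Writing the eigenrelation as $\K(F_0)=\lambda_0\,\A(F_0)$ and integrating over $v$, I would use the symmetry $\sigma(v,v')=\sigma(v',v)$ and the definition of $\nu$ to compute
$$\int_{\R^d}\K(F_0)\,\d v=\int_{\R^d}\Big(\int_{\R^d}\sigma(v,v')M(v)\,\d v\Big)F_0(v')\,\d v'=\int_{\R^d}\nu(v')F_0(v')\,\d v',$$
while, using $\int_{\R^d} E\cdot\nabla_v F_0\,\d v=0$,
$$\int_{\R^d}\A(F_0)\,\d v=\int_{\R^d}\big(E\cdot\nabla_v F_0+\nu F_0\big)\,\d v=\int_{\R^d}\nu(v)F_0(v)\,\d v.$$
Equating these and using $\int_{\R^d}\nu F_0\,\d v>0$ (since $F_0>0$ and $\nu\geq\nu_1$ by \eqref{nuprop2}) forces $\lambda_0=1$. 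Hence $F_0$ solves $\T(F_0)=0$, and $F(\cdot,E):=F_0/\int_{\R^d} F_0\,\d v$ is a positive solution of \eqref{eq:kernelF}. Uniqueness follows from the simplicity of the principal eigenvalue: a positivity improving compact operator has, up to a multiplicative constant, a single positive eigenfunction, namely the one associated with $\lambda_0=r(\mathcal{B})$, and the normalization $\int_{\R^d} F\,\d v=1$ fixes it uniquely. (These bounds and positivity are consistent with, and will be sharpened into, the two-sided estimate of Theorem \ref{thm:FE}(ii).)

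The main obstacle is the identification $\lambda_0=1$, and more precisely the justification of the two integral identities just used. The interchange of the order of integration for $\K$ is immediate from the boundedness of $\sigma$ and from $F_0,M\in L^1(\R^d)$ (note that $F_0\in L^2_{M^{-1}}(\R^d)$ and $M\in L^1$ give $F_0\in L^1$ by Cauchy--Schwarz). The identity $\int_{\R^d}E\cdot\nabla_v F_0\,\d v=0$ requires a little more care: since $E$ is constant in $v$, one has $E\cdot\nabla_v F_0=\nabla_v\cdot(E\,F_0)$, and from the equation $E\cdot\nabla_v F_0=\K(F_0)-\nu F_0\in L^1(\R^d)$; integrating over a ball $B_R$ and letting $R\to\infty$ along a sequence for which the boundary flux $\int_{\partial B_R}F_0\,\d S\to 0$ (such a sequence exists because $F_0\in L^1$) then yields the claim by dominated convergence.
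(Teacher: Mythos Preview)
Your proof is correct and follows essentially the same Krein--Rutman strategy as the paper. The only cosmetic difference is that you apply Krein--Rutman to $\mathcal B=\A^{-1}\K$ and obtain the eigenfunction $F_0$ directly, whereas the paper applies it to $\K\circ\A^{-1}$, obtains an eigenfunction $W$, and then sets $F:=\A^{-1}W$; the two operators share the same nonzero spectrum, so this is immaterial. Your treatment is in fact slightly more careful than the paper's, since you justify the identity $\int_{\R^d} E\cdot\nabla_v F_0\,\d v=0$ explicitly, which the paper leaves implicit.
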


\begin{proof}[Proof of Proposition \ref{pro:Fexis}]
We can rewrite \eqref{eq:kernelF} as
   \begin{equation}\label{eqdist}
 \A F = \K ( F), \qquad  \int F \d v = 1.
  \end{equation}
Formula \eqref{invfor}  shows that $\A^{-1}$ is a nonnegative operator (if $h \geq 0$ then $\A^{ -1} ( h) \geq 0$).
It follows that the operator  $\K \circ \A^{ -1}$ is a positive compact operator
 in $L^2_{M^{-1}} ( \R^d)$ and so we can apply 
 Krein-Rutman's Theorem (see \cite{KrRu1950}) to deduce
the existence of a unique simple positive eigenvalue $\lambda$ with associated
  positive eigenfunction $W$ satisfying 
  $$\left( \K \circ \A^{ -1} \right) W= \lambda W.$$
We now define $F := \A^{-1} W$ and note that thanks to Lemma \ref{Ainv} it satisfies

$$
\K ( F) = \lambda \A F.
$$
Integrating this relation with respect to $v$ and using the definition of $\nu$, we find
\[
 \int \nu ( v) F ( v) \d v = \lambda\int \nu ( v) F ( v) \d v,
\]
from which it follows that $\lambda = 1$. After normalizing $F$ the proposition follows.
\end{proof}

We complete this section with a proof of Lemma \ref{Ainv}:

\begin{proof}[Proof of Lemma \ref{Ainv}]
The fact that $( \A \circ \A^{ -1}) ( f) = f$ for all $f \in L^2_{ M^{ -1}} \left( \R^d \right)$, and
$( \A^{ -1} \circ \A) ( f) = f$ for all $f \in D( \A)$ can be proved as the Proposition~1 in~\cite{Po1992}. 
   
To show that $\A^{-1}$ is a bounded operator, we first note (using \eqref{nuprop2}) that
$$ 
| \A^{ -1} ( h)|  \leq \int_0^\infty \e^{ -\nu_1 s}  h ( v -  E s) \d s.
$$
We thus have
\begin{align*}
\int_{\R^d} | \A^{ -1} ( h)|^2 \frac{\d v}{M(v)}  & \leq 
 \frac{1}{\nu_1} \int_{\R^d} \int_0^\infty \e^{ -\nu_1 s}  \frac{|h ( v -  E s)|^2}{M(v)} \d s \, dv \\
  & \leq 
 \frac{1}{\nu_1} \int_{\R^d} \int_0^\infty \e^{ -\nu_1 s}  \frac{|h ( v)|^2}{M(v +  E s)} \d s \, dv \\
 & \leq 
 \frac{1}{\nu_1} \int_{\R^d} \left( \int_0^\infty \e^{ -\nu_1 s} \frac{M(v)}{M(v +  E s)} \d s\right) \frac{|h (v )|^2}{M(v)} \, dv, 
\end{align*}   
and we conclude thanks to the following claim: There exists a $C > 0$ such that
\begin{equation*}
    \left( \int_0^\infty \e^{ -\nu_1 s} \frac{M(v)}{M(v +   E s)} \d s\right) \leq C(1+  |E|^{d+\alpha})) \quad \mbox{ for all $v\in \R^d$.}
\end{equation*}
This last bound is proved by first noticing that \eqref{eqdecay} implies, in particular, the existence of $\mu_1, \mu_2 > 0$ such that
 \begin{equation}\label{comparison}
  \frac{ \mu_1}{ 1 + | v|^{ d + \alpha}} \leq M( v) \leq \frac{ \mu_2}{ 1 + | v|^{ d + \alpha}} \quad \mbox{for all } v \in \R^d.
 \end{equation}
Therefore, using the elementary inequality $| a + b |^p \leq C \left( | a|^p + | b|^p \right)$, valid for $p \geq 1$, we obtain the following estimate:
\begin{align*}
\int_0^\infty \e^{ -\nu_1 s} \frac{M(v)}{M(v +  E s)} \d s
& \leq \frac{\mu_2}{\mu_1} \int_0^\infty \e^{ -\nu_1 s} \frac{1+|v +  E s|^{d+\alpha}}{1+|v|^{d+\alpha}} \d s \\
& \leq C \frac{\mu_2}{\mu_1} \int_0^\infty \e^{ -\nu_1 s}(1+|  E s|^{d+\alpha}) \d s \\
& \leq C (1+|  E |^{d+\alpha} ).
\end{align*}

\end{proof}

\subsection{Properties of $F(v,E)$: Theorem \ref{thm:FE} (ii)}
As noted in the Introduction, in the simpler case where the cross section satisfies
$$\sigma(v,v')=1 \quad\mbox{ for all } v,\,v'\in \R^d ,$$
the equation for $F$ reduces to
$$
  F + E \cdot \nabla_v F = M(v),
$$
and we  get the following explicit formula for $F$:
\begin{equation}\label{eqsolution}
  F ( v, E)= \A^{-1} M(v) = \int_0^\infty \e^{ -z} M ( v - E z) \d z.
\end{equation}

In the general case, it does not seem possible to get such an explicit formula.
However, Assumption \eqref{sigmaProp} and the normalization of $F$ imply
$$\nu_1 M(v)\leq  \K(F) \leq \nu_2 M(v).$$
In particular, $F$ satisfies
\begin{equation}\label{eq:FMFMF}
\nu_1 M(v)\leq  
\nu F+  E \cdot \nabla_v F  \leq \nu_2 M(v).
\end{equation}
As a consequence, we can prove the following proposition (see Theorem \ref{thm:FE} (ii)):
\begin{proposition}
There exist constants $C(R)$ and $c(R)$ such that if $|E|\leq R$ then
\begin{equation}\label{eq:FEbound}
 c(R) M(v)\leq F(v,E)\leq C(R) M(v) \qquad \mbox{ for all } v \in \R^d.
 \end{equation}
\end{proposition}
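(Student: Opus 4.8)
The plan is to exploit the fixed-point identity $F = \A^{-1}(\K(F))$, which follows from rewriting \eqref{eq:kernelF} as $\A F = \K(F)$ and applying Lemma~\ref{Ainv}, together with the fact that the explicit formula \eqref{invfor} exhibits $\A^{-1}$ as a positivity-preserving (hence, being linear, order-preserving) operator. The normalization $\int F\,dv=1$ and Assumption \eqref{sigmaProp} give $\nu_1 M(v)\leq \K(F)(v)\leq \nu_2 M(v)$, which is exactly the content of \eqref{eq:FMFMF} since $\A F=\K(F)$. Applying the monotone operator $\A^{-1}$ to this sandwich immediately yields
$$
\nu_1\,\A^{-1}(M)(v)\ \leq\ F(v,E)\ \leq\ \nu_2\,\A^{-1}(M)(v),
$$
so the proposition is reduced to proving that $\A^{-1}(M)$ is itself comparable to $M$, i.e. that there are constants $c(R),C(R)>0$ with $c(R)M(v)\leq \A^{-1}(M)(v)\leq C(R)M(v)$ whenever $|E|\leq R$.

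For the upper bound I would argue exactly as in the proof of Lemma~\ref{Ainv}: using $\nu\geq\nu_1$ in \eqref{invfor} gives $\A^{-1}(M)(v)\leq \int_0^\infty e^{-\nu_1 s}M(v-Es)\,ds$, and the two-sided comparison \eqref{comparison} together with the elementary inequality $|a+b|^p\leq C(|a|^p+|b|^p)$ bounds the ratio $M(v-Es)/M(v)$ by $C(1+|E|^{d+\alpha}s^{d+\alpha})$. Integrating against $e^{-\nu_1 s}$ then produces $\A^{-1}(M)(v)\leq C(1+|E|^{d+\alpha})M(v)\leq C(R)M(v)$ for $|E|\leq R$.

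The lower bound is the step I expect to be the only real obstacle, because for large $|v|$ the ratio $M(v-Es)/M(v)$ cannot be bounded below uniformly in $s$: translating $v$ in a favorable direction makes $M$ much larger and in an unfavorable one much smaller, so one must discard most of the integral and keep only a bounded range of $s$. Concretely, using $\nu\leq\nu_2$ in \eqref{invfor} and restricting to $s\in[0,1]$ gives $\A^{-1}(M)(v)\geq \int_0^1 e^{-\nu_2 s}M(v-Es)\,ds$. For $s\in[0,1]$ and $|E|\leq R$ we have $|v-Es|\leq |v|+R$, so \eqref{comparison} yields
$$
M(v-Es)\ \geq\ \frac{\mu_1}{\mu_2}\,\frac{1+|v|^{d+\alpha}}{1+(|v|+R)^{d+\alpha}}\,M(v)\ \geq\ c_0(R)\,M(v),
$$
with $c_0(R)>0$ depending only on $R$; pulling this constant out and integrating $e^{-\nu_2 s}$ over $[0,1]$ gives $\A^{-1}(M)(v)\geq c(R)M(v)$. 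Combining the two comparisons with the displayed sandwich finishes the proof, the final constants being $\nu_1 c(R)$ and $\nu_2 C(R)$.
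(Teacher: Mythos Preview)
Your proof is correct and follows essentially the same route as the paper: both reduce \eqref{eq:FEbound} to the sandwich $\nu_1 M\leq \A F\leq \nu_2 M$ and then apply the positivity-preserving $\A^{-1}$, bounding $\A^{-1}(M)$ above and below by multiples of $M$ (the paper packages this step as Lemma~\ref{definqbo}). The only minor difference is that for the upper bound the paper splits the integral at $\eta=|v|/(2|E|)$, whereas you use the simpler global estimate $M(v-Es)/M(v)\leq C(1+|Es|^{d+\alpha})$ already employed in the proof of Lemma~\ref{Ainv}; both give the same conclusion, and your lower-bound argument is essentially identical to the paper's.
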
 
This proposition follows immediately from \eqref{eq:FMFMF} and  the following lemma  (which will be used several times in this paper):

\begin{lemma}\label{definqbo}
There exist two constants $C(R) > 0$ and $c(R)>0$ 
such that if $|E|\leq R$ then the following holds:
\item[(i)] If $f$ satisfies 
\begin{equation}\label{eq:fMM}
  \nu f + E \cdot \nabla_v f  \leq \beta M
\end{equation}
for some $\beta > 0$, then
\[    
  f \leq C \beta M.   
\]
\item[(ii)] If $f$ satisfies 
\begin{equation}\label{eq:fMM2}
  \nu f + E \cdot \nabla_v f  \geq \beta M
\end{equation}
for some $\beta > 0$, then
$$ 
  f \geq c \beta M.  
$$
\end{lemma}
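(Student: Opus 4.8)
The plan is to reduce both parts to a single two-sided pointwise bound on the function $\A^{-1}(M)$ and then to exploit the fact, already noted after \eqref{invfor}, that $\A^{-1}$ is a nonnegative (hence order-preserving) operator. The central claim I would isolate is that, for $|E|\le R$, there exist constants $c(R),C(R)>0$ such that
\[
 c(R)\,M(v)\le \A^{-1}(M)(v)\le C(R)\,M(v)\qquad\text{for all }v\in\R^d.
\]
Granting this, part (i) follows immediately: rewriting \eqref{eq:fMM} as $\A(f)\le\beta M$, the function $g:=\beta M-\A(f)$ is nonnegative, and since Lemma \ref{Ainv} gives $f=\A^{-1}(\A f)$ while $\A^{-1}$ preserves nonnegativity, I get $f=\beta\,\A^{-1}(M)-\A^{-1}(g)\le\beta\,\A^{-1}(M)\le C(R)\beta M$. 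Part (ii) is entirely symmetric, using instead $g:=\A(f)-\beta M\ge0$ together with the lower bound on $\A^{-1}(M)$.

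It therefore remains to establish the two-sided bound. For the upper bound I would reuse the computation already performed in the proof of Lemma \ref{Ainv}: bounding the exponential weight in \eqref{invfor} by $\e^{-\nu_1 s}$ (since $\nu\ge\nu_1$) and using the comparison \eqref{comparison} together with the elementary inequality $|a+b|^p\le C(|a|^p+|b|^p)$, one finds
\[
 \A^{-1}(M)(v)\le M(v)\int_0^\infty \e^{-\nu_1 s}\,\frac{M(v-Es)}{M(v)}\,\d s\le C\,(1+|E|^{d+\alpha})\,M(v)\le C(R)\,M(v).
\]
For the lower bound I would discard most of the integral and retain only $s\in[0,1]$: using $\nu\le\nu_2$ and \eqref{comparison}, the ratio $M(v-Es)/M(v)$ is bounded below by a constant depending only on $R$ on this range, because $|v-Es|\le|v|+R$ forces $1+|v-Es|^{d+\alpha}\le C(R)(1+|v|^{d+\alpha})$. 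Hence $\A^{-1}(M)(v)\ge M(v)\int_0^1 \e^{-\nu_2 s}\,(M(v-Es)/M(v))\,\d s\ge c(R)\,M(v)$, which completes the claim.

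The step I expect to require the most care is the identity $f=\A^{-1}(\A f)$, which by Lemma \ref{Ainv} is valid only for $f\in D(\A)$; one must check that the functions to which the lemma is applied (such as $F$ and, later, $\partial_E F$) indeed belong to this domain, or else replace the operator-theoretic argument by a direct integration of \eqref{eq:fMM} along the characteristics $s\mapsto v-Es$, with integrating factor $\e^{-\int_0^s\nu(v-E\tau)\,\d\tau}$, which reproduces the representation \eqref{invfor} while only requiring that the boundary term $\e^{-\int_0^S\nu}\,f(v-ES)$ vanish as $S\to\infty$ (guaranteed by $\nu\ge\nu_1>0$ and the decay of $f$). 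The quantitative content, namely the two-sided bound on $\A^{-1}(M)$, is otherwise routine given \eqref{comparison}.
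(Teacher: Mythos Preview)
Your proof is correct and follows the same overall strategy as the paper: reduce both parts to the two-sided bound $c(R)M\le\A^{-1}(M)\le C(R)M$ by integrating the differential inequality along characteristics (the paper does this directly rather than via the operator-theoretic framing, but the content is identical), and handle the lower bound by discarding all but $s\in[0,1]$ and using $M(v-w)\ge c(R)M(v)$ for $|w|\le R$.

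The one genuine difference is in the upper bound. The paper splits $\int_0^\infty \e^{-\nu_1 z}\frac{M(v-Ez)}{M(v)}\,\d z$ at $\eta=|v|/(2|E|)$: on $[0,\eta]$ it uses $|v-Ez|\ge|v|/2$ to control the ratio, and on $[\eta,\infty)$ it lets the exponential $\e^{-\nu_1|v|/(2|E|)}$ kill the polynomial growth $1+|v|^{d+\alpha}$. Your argument is shorter: from $|v|\le|v-Es|+|Es|$ and the elementary power inequality you get $\frac{M(v-Es)}{M(v)}\le C(1+|Es|^{d+\alpha})$ uniformly in $v$, and then integrate against $\e^{-\nu_1 s}$. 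This is precisely the computation already done in the proof of Lemma~\ref{Ainv} (with $v$ shifted), so your claim to ``reuse'' it is justified. Both arguments yield the same $C(R)$-dependence; yours avoids the case distinction at no cost.

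Your closing remark about the domain condition $f\in D(\A)$ versus direct integration along characteristics is apt: the paper silently takes the latter route, and the vanishing of the boundary term at infinity is indeed implicit in both treatments.
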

  
\begin{remark}\label{frem}
A similar result holds if we replace $M$ by $M( v) / ( 1 + | v|)$  in both inequalities. 
\end{remark} 

\begin{proof}[Proof of Lemma \ref{definqbo}]
Integrating \eqref{eq:fMM} (see the definition of $\A^{-1}$ given by \eqref{invfor}), we obtain 
\begin{align*}
f(v) & 
\leq \beta \int_0^\infty \e^{ - \int_0^z \nu ( v -   E \tau) \d \tau}  M ( v -   E z) \d z\\
&\leq \beta \int_0^\infty \e^{ - \nu_1 z} M ( v - E z) \d z, 
\end{align*}
and the first part of the lemma follows from the following claim:
There exists $C(R)>0$ such that
\begin{equation}
\label{eq:boundM}  \int_0^\infty \e^{ -\nu_1 z} M( v - E z) \d z \leq C(R) M( v) \qquad \text{for all }  v\in \R^d, \text{ and all } |E|\leq R .
\end{equation}
In order to prove \eqref{eq:boundM}, we first write
 \begin{align}
 \int_0^\infty \e^{ -\nu_1 z} \frac{ M( v - E z)}{ M( v)} \d z & = \int_0^\eta \e^{ -\nu_1 z} \frac{ M( v - E z)}{ M( v)} \d z + \int_\eta^\infty \e^{ -\nu_1 z} \frac{ M( v - E z)}{ M( v)} \d z \nonumber \\   
                                                        & = I_1 + I_2, \nonumber 
 \end{align}
 where $\eta = | v| / ( 2 |E|)$. The triangle inequality gives $| | v | - | E| z | \leq | v - E z|$, which implies
 \[
  \bigg| \frac{ v}{ 2} \bigg|^{ d + \alpha} \leq \Big| | v| - | E | z \Big|^{ d + \alpha} \leq | v - E z|^{ d + \alpha} \, ,
  \qquad \mbox{for $0 \leq z \leq \eta$.}
 \]
 Hence, using \eqref{comparison} yields 
$$
 \frac{ M( v - E z)}{ M( v)} \leq \frac{\mu_2}{\mu_1}
  \frac{ 1 + | v|^{ d + \alpha}}{ 1 + | v - E z|^{ d + \alpha}} \leq \frac{\mu_2}{\mu_1} \frac{ 1 + | v|^{ d + \alpha}}{ 1 + | v/ 2|^{ d + \alpha}}  \qquad \mbox{for $0 \leq z \leq \eta$.}
$$
Therefore we deduce
 \begin{align}
 I_1 = \int_0^\eta \e^{ -\nu_1 z} \frac{ M( v - E z)}{ M( v)} \d z & \leq \frac{ \mu_2}{ \mu_1} \int_0^\eta \e^{ -\nu_1 z} \frac{ 1 + | v|^{ d + \alpha}}{ 1 + | v/ 2|^{ d + \alpha}} \d z \nonumber \\
                                                    & \leq \frac{ \mu_2}{ \mu_1\nu_1} \frac{ 1 + | v|^{ d + \alpha}}{ 1 + | v/ 2|^{ d + \alpha}} \, \nonumber \\
                                                    & \leq C_1, \nonumber 
 \end{align}
 where $C_1 > 0$ does not depend on $v$. 
 Next, using \eqref{comparison} again, we get
 \begin{align}
I_2 = \int_{ \eta}^\infty \e^{ -\nu_1 z} \frac{ M( v - E z)}{ M( v)} \d z & \leq \frac{ \mu_2}{ \mu_1\nu_1} ( 1 + | v|^{ d + \alpha}) \e^{ -\nu_1 | v| / ( 2 | E |)} \nonumber \\
                                                            & \leq \frac{ \mu_2}{ \mu_1\nu_1} ( 1 + | v|^{ d + \alpha}) \e^{ -\nu_1  | v| /( 2R)} \nonumber \\
                                                            & \leq C_2, \nonumber 
 \end{align}
 where $C_2 > 0$ does not depend on $v$ (but depends on $R$). We thus obtain 
$$ \int_0^\infty \e^{ -\nu_1 z} \frac{ M( v - Ez)}{ M( v)} \d z \leq C_1+C_2
$$
which gives \eqref{eq:boundM} and completes the proof of the first part of the lemma.

\medskip

The second part of the lemma is somewhat easier to show. Indeed, proceeding as above, 
we check that \eqref{eq:fMM2} implies
\begin{align*}
f(v) & \geq \beta \int_0^\infty \e^{ - \int_0^s \nu ( v -  E \tau) \d \tau}  M ( v -  E s) \d s\\
     & \geq \beta \int_0^1 \e^{ - \nu_2 s} M ( v -  E s) \d s. 
\end{align*}
Furthermore, it is readily seen that there is a constant $c(R)$ such that 
$$ M(v-w )\geq c \, M(v) \, \quad\mbox{ for all } v, \; w\in\R^d,\quad |w|\leq R.$$
We deduce
\begin{align*}
f(v) 
&\geq c \beta \int_0^1 \e^{ - \nu_2 s} M ( v ) \d s\\
&\geq c \beta  M ( v ), 
\end{align*}
and the result follows.
\end{proof}

\subsection{Coercivity of the operator $\T$}
As a consequence of the results of the previous sections, we can now establish the following  coercivity 
property of  $\T$, which
will play a crucial role in this paper:
\begin{proposition}\label{prop:coer}
For all  $E\in \R^d$, the operator $\T$ defined by \eqref{eq:TE} satisfies
$$
  \int \T ( f)(v) \frac{ f(v)}{ F(v,E)} \d v \geq 0.
$$
Furthermore, 
for all $R>0$
there exists a constant $\theta(R)>0$ such that
for all $|E|\leq R$, there holds
  \begin{equation}
  \int \T ( f)(v) \frac{ f(v)}{ F(v,E)} \d v \geq \theta(R) \lVert f - \rho_f F \lVert^2_{ L^2_ {F^{-1}}(\R^d)}, \quad \text{ for all } f \in L^2_ {F^{-1}}(\R^d) \, . \label{coerineq}
 \end{equation}
\end{proposition}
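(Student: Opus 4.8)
The plan is to perform the substitution $g:=f/F$ and rewrite $\int \T(f)\,\frac{f}{F}\,\d v$ purely in terms of $g$, collapsing it into a manifestly nonnegative double integral. The structural fact I would exploit is that $F$ solves $E\cdot\nabla_v F = Q(F)$, i.e.\ $\T(F)=0$: this is precisely what lets the force term be converted into a collision term. I take $f\in L^2_{F^{-1}}(\R^d)$, so $\int F g^2\,\d v<\infty$, and since $c(R)M\le F\le C(R)M$ (Theorem~\ref{thm:FE}(ii)) the quantities $\int M g^2\,\d v$, $\int M g\,\d v$ and $\rho_f=\int Fg\,\d v$ are all finite.

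\emph{Transport term.} Writing $f=Fg$ and using that $E$ is independent of $v$,
\begin{equation*}
\int (E\cdot\nabla_v f)\,\frac{f}{F}\,\d v = \int (E\cdot\nabla_v F)\,g^2\,\d v + \frac12\int F\,E\cdot\nabla_v(g^2)\,\d v .
\end{equation*}
Integrating by parts the second integral (the boundary flux at infinity vanishes; rigorously I would first take $g\in C_c^\infty$ and then pass to the limit by density) produces $-\frac12\int (E\cdot\nabla_v F)g^2\,\d v$, leaving $\frac12\int (E\cdot\nabla_v F)g^2\,\d v$. Now $E\cdot\nabla_v F=Q(F)$ turns this into $\frac12\int Q(F)\,g^2\,\d v$.

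\emph{Collision term and symmetrization.} I would expand $-\int Q(Fg)\,g\,\d v$ using \eqref{GeColOpe}, add the $\frac12\int Q(F)g^2\,\d v$ from above, and use $\sigma(v,v')=\sigma(v',v)$ together with a relabelling $v\leftrightarrow v'$ in one double integral. The cross and diagonal terms should reorganize into
\begin{equation*}
\int \T(f)\,\frac{f}{F}\,\d v = \frac12\iint \sigma(v,v')\,M(v)\,F(v')\,\bigl(g(v)-g(v')\bigr)^2\,\d v'\,\d v ,
\end{equation*}
which is nonnegative since $\sigma,M,F\ge0$; this gives the first assertion.

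\emph{Coercivity.} Using $\sigma\ge\nu_1$ I pass to $\frac{\nu_1}{2}\iint M(v)F(v')(g(v)-g(v'))^2\,\d v'\,\d v$, expand the square, and exploit $\int M\,\d v=\int F\,\d v=1$. With $a=\int Mg\,\d v$ and $b=\rho_f=\int Fg\,\d v$, this double integral equals $\mathrm{Var}_M(g)+\mathrm{Var}_F(g)+(a-b)^2$, hence is $\ge \mathrm{Var}_F(g)=\int F(g-\rho_f)^2\,\d v=\lVert f-\rho_f F\rVert_{L^2_{F^{-1}}}^2$, yielding \eqref{coerineq} with $\theta(R)=\nu_1/2$. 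The main obstacle is the careful bookkeeping in the symmetrization step and the rigorous justification of the integration by parts, with the variance identity and the two-sided bound $c(R)M\le F\le C(R)M$ (guaranteeing all integrals are finite) as the supporting technical points.
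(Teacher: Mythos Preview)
Your argument is correct and arrives at the same key identity as the paper,
\[
\int \T(f)\,\frac{f}{F}\,\d v \;=\; \frac12 \iint \sigma(v,v')\,M(v)\,F(v')\bigl(g(v)-g(v')\bigr)^2\,\d v'\,\d v,
\qquad g=f/F,
\]
obtained by the same mechanism: integrate the transport term by parts and replace $E\cdot\nabla_v F$ using the equation $\T(F)=0$, then symmetrize the collision integrals via $\sigma(v,v')=\sigma(v',v)$.

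Where you genuinely diverge from the paper is in the coercivity step. The paper first uses the pointwise bound $F\le C(R)M$ from Theorem~\ref{thm:FE}(ii) to replace $M(v)$ by $F(v)/C(R)$, obtaining the symmetric weight $F(v)F(v')$; expanding then gives $\theta(R)=\nu_1/C(R)$, which depends on $R$. You instead keep the asymmetric weight $M(v)F(v')$ and expand directly, recognizing the result as $\mathrm{Var}_M(g)+\mathrm{Var}_F(g)+(a-b)^2\ge \mathrm{Var}_F(g)=\lVert f-\rho_f F\rVert_{L^2_{F^{-1}}}^2$. This is cleaner and yields the sharper constant $\theta=\nu_1/2$, \emph{independent of $R$}; the two-sided bound $c(R)M\le F\le C(R)M$ is then only needed to guarantee that $\int M g^2\,\d v<\infty$ for $f\in L^2_{F^{-1}}$, not to produce the constant. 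So your route actually proves a slightly stronger statement than \eqref{coerineq}.
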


\begin{proof}
Throughout this proof, we use the notation $f$ for $f(v)$ and $f'$ for $f(v')$ (and similar notations for $F$ and $M$).

Let us start by noting  the following
 \begin{align}
  \int \T ( f) \frac{ f}{ F} \d v & =\int E \cdot \nabla_v f \frac{ f}{ F } \d v +\int \nu \frac{ f^2}{ F} \d v -
   \int \int \sigma ( v, v') M f' \frac{ f}{ F} \d v \d v' \nonumber \\
                                 & =  \int \frac{ 1}{ 2} E \cdot \frac{ \nabla_v f^2}{ F} \d v 
                                 +\int \nu \frac{ f^2}{ F} \d v 
                                 -\int \int \sigma ( v, v') M F' \frac{ f'}{ F'} \frac{ f}{ F} \d v \d v'  \, . \nonumber
 \end{align}
 Integrating by parts and using the identity $  E \cdot \nabla_v F = \K ( F ) - \nu F $ we see that
 \begin{align}
 \frac{ 1}{ 2} \int E \cdot \frac{ \nabla_v f^2}{ F} \d v & = - \frac{ 1}{ 2} \int f^2   E \cdot \nabla_v \Big( \frac{ 1}{ F} \Big) \nonumber \\
                                                                                & = \frac{ 1}{ 2} \int \frac{ f^2}{ F^2} \left( \K ( F) - \nu F \right) \d v \, . \nonumber  
 \end{align}
 Using the fact that $M$ and $F$ are normalized functions and that $\sigma$ is symmetric, we deduce the following: 
 \begin{align}
  \int \T  ( f) \frac{ f}{ F } \d v & = \frac{ 1}{ 2} \int \nu \frac{ f^2}{ F } \d v + \frac{ 1}{ 2} \int \int \sigma ( v, v') M F ' \frac{ f^2}{ F ^2} \d v \d v'  \\
  & \qquad -  \int \int \sigma ( v, v' ) M F ' \frac{ f'}{ F '} \frac{ f}{ F} \d v \d v' \nonumber \\
                                 & =  \frac{ 1}{ 2} \int \int \sigma ( v', v) M' F  \frac{ f^2}{ F^2} \d v \d v' +  \frac{ 1}{ 2} \int \int \sigma ( v, v') M F ' \frac{ f^2}{ F ^2} \d v \d v'  \\
                                 & \qquad  - \int \int \sigma ( v, v') M F ' \frac{ f'}{ F'} \frac{ f}{ F } \d v \d v' \nonumber \\
                                 & = \frac{ 1}{ 2} \int \int \sigma ( v, v' ) \left(  M F' \bigg( \frac{ f'}{ F'} \bigg)^2 + M F' \frac{ f^2}{ F ^2} - 2 M F ' \frac{ f'}{ F '} \frac{ f}{ F } \right) \d v \d v' \nonumber \\
                                 & =  \frac{ 1}{ 2} \int \int \sigma ( v, v') M F' \bigg( \frac{ f}{ F} - \frac{ f'}{ F'} \bigg)^2 \d v' \d v . \nonumber
 \end{align}
 Since the right hand side is clearly non-negative, this gives the first inequality in the proposition.
 
If we further assume that $|E|\leq R$, then we can use
\eqref{eq:FEbound} and together with assumption \eqref{sigmaProp} it yields:
$$   \int \T ( f) \frac{ f}{ F} \d v
 \geq  \frac{ \nu_1}{ 2 C(R)} \int \int  F F' \bigg( \frac{ f}{ F} - \frac{ f'}{ F'} \bigg)^2 \d v' \d v .
$$
Finally, using the  decomposition $f = \rho_f F + g$ and the fact $\int_{\R^d} g \d v = 0$ we obtain 
 \begin{align}
  \int \T ( f) \frac{ f}{ F} \d v & \geq  \frac{ \nu_1}{ 2 C(R)} \int \int F F' \bigg( \frac{ g}{ F} - \frac{ g'}{ F'} \bigg)^2 \d v' \d v \nonumber \\
                                  & = \frac{ \nu_1}{ 2 C(R)} \int \int F \frac{ g'^2}{ F'} - 2 g g' + \frac{ g^2}{ F} F' \d v \d v' \,  \nonumber \\
                                  & = \frac{ \nu_1}{ C(R)} \int  \frac{ g^2}{ F}   \d v .\nonumber 
 \end{align}
This completes the proof. 
\end{proof}

\subsection{Properties of $F(v,E)$: Theorem \ref{thm:FE} (iii)}
This Section is devoted to the
 proof of the estimate on the derivative of $F$ with respect to $E$ (Theorem \ref{thm:FE}-(iii)).

First, we prove the following result.
  \begin{lemma}\label{lem:2}
For all $R>0$ there exists $C(R)$ such that
the function $F(v,E)$ solution of \eqref{eq:kernelF} satisfies 
  \begin{equation}\label{nabF}
   | \nabla_v F ( v,E) | \leq C(R) \frac{ M( v)}{ 1 + | v|},\qquad \mbox{ for all } v\in \R^d, \; |E|\leq R.
  \end{equation}
 \end{lemma}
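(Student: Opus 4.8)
The plan is to differentiate \eqref{eq:kernelF} in $v$ and to read the resulting identity as a transport equation of the form $\A w = g$ for each partial derivative $w=\partial_{v_k}F$, which can then be inverted by the positivity-preserving operator $\A^{-1}$ of \eqref{invfor}. First I would rewrite \eqref{eq:kernelF} as $\A F=\K(F)$, i.e. $\nu F+E\cdot\nabla_v F=\K(F)$. The key preliminary remark is that the right-hand side $\K(F)(v)=a(v)M(v)$, with $a(v)=\int\sigma(v,v')F(v')\d v'$, is differentiable in $v$ with a good bound \emph{before} anything is known about $\nabla_v F$: the symmetry \eqref{sigmaProp} together with \eqref{nuprop} and $\int F\d v'=1$ give $\nu_1\le a\le\nu_2$ and $|\nabla_v a|\le C/(1+|v|)$, while \eqref{derdecay2} controls $\nabla_v M$, so that
\begin{equation*}
|\nabla_v\K(F)(v)|\le C\,\frac{M(v)}{1+|v|}.
\end{equation*}

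Next I would differentiate the equation with respect to $v_k$ to obtain that $w_k:=\partial_{v_k}F$ solves
\begin{equation*}
\A w_k = g_k,\qquad g_k:=\partial_{v_k}\K(F)-(\partial_{v_k}\nu)\,F.
\end{equation*}
Combining the bound above with \eqref{nuprop2} on $\nabla_v\nu$ and the upper bound $F\le C(R)M$ from Theorem \ref{thm:FE}(ii), I get $|g_k|\le C(R)\,M/(1+|v|)$ whenever $|E|\le R$. Since the representation \eqref{invfor} shows that $\A^{-1}$ is positivity preserving, we have $w_k=\A^{-1}g_k$ and $|w_k|\le\A^{-1}|g_k|\le C(R)\,\A^{-1}\!\big(M/(1+|v|)\big)$; the $M/(1+|v|)$-version of estimate \eqref{eq:boundM} recorded in Remark \ref{frem} then gives $|\partial_{v_k}F|\le C(R)\,M/(1+|v|)$, which is \eqref{nabF} after summing over $k$.

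The main obstacle is to justify the differentiation rigorously, since a priori $F$ is only an $L^2_{M^{-1}}$ solution of \eqref{eq:kernelF}. I would avoid assuming any regularity by working instead with the difference quotients $\delta_h^k F(v)=\big(F(v+he_k)-F(v)\big)/h$. Translating $\A F=\K(F)$ by $he_k$ and subtracting shows that $\delta_h^k F$ solves $\A(\delta_h^k F)=G_h$ with $G_h=\delta_h^k\K(F)-(\delta_h^k\nu)\,F(\cdot+he_k)$; for $|h|\le 1$ the same ingredients as above, now used through the Lipschitz bounds of \eqref{nuprop}, \eqref{derdecay2}, \eqref{nuprop2} and the elementary fact that $M(v+he_k)\le C\,M(v)$, yield $|G_h|\le C(R)\,M/(1+|v|)$ uniformly in $h$. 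The positivity of $\A^{-1}$ and Remark \ref{frem} then bound $|\delta_h^k F|\le C(R)\,M/(1+|v|)$ uniformly in $h$, so that $F$ is locally Lipschitz and its derivatives satisfy the same bound in the limit $h\to0$; since the transport equation $\A w_k=g_k$ forces $w_k$ to be continuous, this upgrades to the pointwise statement \eqref{nabF}. I expect this passage — controlling the translated equilibrium $F(\cdot+he_k)$ and the exponential weight in $\A^{-1}$ uniformly in $h$ — to be the only delicate point; all the velocity-decay estimates are supplied by the preceding lemmas.
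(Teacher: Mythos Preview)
Your proposal is correct and follows essentially the same route as the paper: differentiate \eqref{eq:kernelF} in $v$, bound the resulting right-hand side by $C(R)\,M/(1+|v|)$ using \eqref{sigmaProp}, \eqref{nuprop}, \eqref{derdecay2}, \eqref{nuprop2} and \eqref{eq:FEbound}, and then conclude by Lemma~\ref{definqbo}/Remark~\ref{frem} (which is exactly your application of the positivity-preserving $\A^{-1}$). The only difference is that the paper differentiates formally without comment, whereas you add the difference-quotient argument to justify this step; that extra care is welcome but not a different strategy.
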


 \begin{proof}
Differentiating \eqref{eq:kernelF}, with respect to $v_i$, we obtain:  
  \begin{align}
   E \cdot \nabla_v \left( \partial_{ v_i} F \right) + \nu \left( \partial_{ v_i} F \right) & = \int \sigma ( v, v') F ( v') \d v' \, \partial_{ v_i} M ( v) \nonumber \\
            & \quad  + \int \partial_{ v_i} \sigma ( v, v') F  ( v') \d v' M( v)- \left( \partial_{ v_i} \nu \right) F .  \label{eq:Fprop}                                                                                 
  \end{align}
The first term in the right hand side of \eqref{eq:Fprop} can be bounded by $C M( v) / ( 1 + |v|)$,  thanks to \eqref{sigmaProp} and 
  assumption \eqref{derdecay2}. The second term in \eqref{eq:Fprop} can also be bounded by $C M( v) / ( 1 + |v|)$
  thanks to the assumption \eqref{nuprop} and the normalization of $F$. 
  Finally,  using \eqref{nuprop} and \eqref{eq:FEbound}, the third term in the right hand side of \eqref{eq:Fprop} can 
  also be bounded by $C M( v) / ( 1 + |v|)$. We thus have 
$$\big|   E \cdot \nabla_v \left( \partial_{ v_i} F  \right) + \nu \left( \partial_{ v_i} F  \right) \big| \leq  C \frac{ M( v)}{ 1 + | v|}
$$
and we conclude the proof  using Lemma \ref{definqbo} and Remark \ref{frem}. 
 \end{proof}

We can now complete the proof of Theorem \ref{thm:FE}:
  
\begin{proof}[Proof of Theorem \ref{thm:FE}-(iii)] 
  We first prove that $\partial_E F$ is uniformly bounded in $L^2_{ F^{ -1}}$ for $|E|\leq R$: 
  Differentiating  \eqref{eq:kernelF}  
  with respect to $E_i$ yields:
      \begin{equation}\label{eq:auxF}
   \T( \partial _{E_i} F)  = -  \partial_{v_i} F.
   \end{equation}
   Thus multiplying by $\partial _{E_i} F / F$ and using the coercivity inequality \eqref{coerineq} (assuming $|E|\leq R$) we obtain
   \[
    \vartheta \| \partial _{E_i} F  \|^2_{ L^2_{ F^{ -1} }} \leq -  \int \partial_{v_i} F  \frac{ \partial _{E_i} F}{ F} \d v,
   \]
   where we have used the fact that $\partial _{E_i} \int F  \d v = 0$. The right hand side can be estimated using \eqref{nabF} and \eqref{eq:FEbound}:
   \begin{align}
 \left|   \int \partial_{v_i} F \frac{ \partial _{E_i} F  }{ F } \d v \right|& \leq C \int | \partial _{E_i} F | \d v  \leq C   \left( \int \frac{ | \partial _{E_i} F |^2}{ F} \d v \right)^{ 1 / 2}. \nonumber 
   \end{align}
   We deduce
   \[
    \vartheta \| \partial _{E_i} F   \|_{ L^2_{ F^{ -1}} } \leq C 
   \]
 which implies   in particular 
   \begin{equation}\label{eq:aus}
    \int | \partial _{E_i} F  | \d v \leq \left( \int \frac{ | \partial _{E_i} F |^2}{ F } \d v \right)^{ 1 / 2} \leq C  .
       \end{equation}
   
   Finally, in order to obtain \eqref{pEF} we
   rewrite  \eqref{eq:auxF} as    
   \begin{align}
    E \cdot \nabla_v  \partial _{E_i} F  + \nu  \partial _{E_i} F &= \K (\partial _{E_i} F ) -  \partial_{v_i} F\nonumber \\
                                                                          & =: H  ( v, E) \nonumber 
   \end{align}
and,   using the fact that $\int \partial _{E_i} F \, dv=0$, we note that
   $$H(v,E) =  \int [\sigma ( v, v') -\nu_0] \partial _{E_i} F ( v',E) \d v' M( v) -  \partial_{v_i} F$$
 So using \eqref{eq:silim}, \eqref{nabF} and \eqref{eq:aus}, we deduce
   \begin{align*}
    | H ( v,E)| & \leq  \int |\partial _{E_i} F ( v',E) |\d v' \frac{M( v)}{1+|v|} + C\frac{ M( v)}{1+|v|} \nonumber \\ 
                    & \leq C\frac{ M( v)}{1+|v|} .
   \end{align*}  
We can then conclude the proof using  Lemma \ref{definqbo} (see Remark \ref{frem}) and  \eqref{eq:FEbound}. 
\end{proof}

   
   

\subsection{Properties of $F(v,E)$: Proposition \ref{prop:Fas}}
When $\sigma=1$, we see, 
using \eqref{eqsolution} that 
\begin{equation}\label{eq:Fsimple}
F(v,E)\sim M(v) -   E\cdot \nabla _v M(v) \quad\mbox{ as } |E|\to 0.
\end{equation}
In the general case, we do not have an explicit formula for $F$ which would give us such an expansion. 
Our goal in this section is thus to prove Proposition \ref{prop:Fas} which gives the require asymptotic behavior of $F$ as $E$ goes to zero.

But first, we need to prove the existence of the auxiliary function $\lambda(v)$ appearing in 
\eqref{eq:D} and \eqref{eq:FGE}:
\begin{lemma}\label{lem:lambda0}
Assume \eqref{eqprop}-\eqref{nuprop}.
Then there exists a unique function $\lambda \in (L^2_{M^{-1}}(\R^d))^d$ satisfying
\begin{equation}\label{eq:lambda} 
Q(\lambda)(v) =\nabla_v M(v) , \qquad \int_{\R^d} \lambda(v) \, dv =0.
\end{equation}
Furthermore, it satisfies
\begin{equation}\label{eq:lambdaprop}
|\lambda(v)|\leq CM(v), \qquad |\partial_{v_i} \lambda_j(v)|\leq CM(v) \, \mbox{ for all $1 \leq i,j \leq d$}.
\end{equation}
\end{lemma}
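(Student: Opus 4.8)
The plan is to prove existence and uniqueness of $\lambda$ by a Fredholm-type argument in $L^2_{M^{-1}}(\R^d)$, exactly as one handles the standard auxiliary problem for the collision operator, and then to establish the pointwise bounds \eqref{eq:lambdaprop} separately using the explicit inversion formula for $\A^{-1}$ (the case $E=0$) together with Lemma \ref{definqbo} and Remark \ref{frem}. The crucial structural fact is that by Lemma \ref{lem:Q} the operator $Q$ is bounded and coercive on $L^2_{M^{-1}}$, with $-\int Q(f)f\,M^{-1}\d v \geq \nu_1\|f-\rho_f M\|^2_{L^2_{M^{-1}}}$. Hence $Q$ is self-adjoint and nonpositive, its kernel is exactly the one-dimensional span of $M$, and its range is the orthogonal complement $\{g : \int g\,\d v = 0\}$. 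Each component $\partial_{v_i} M$ of the right-hand side satisfies the solvability condition $\int_{\R^d}\partial_{v_i} M(v)\,\d v = 0$ since $M$ decays at infinity, so the equation $Q(\lambda_i)=\partial_{v_i}M$ admits a solution, and imposing $\int\lambda_i\,\d v=0$ selects it uniquely among solutions differing by multiples of $M$. This gives the first assertion.

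For the quantitative decay \eqref{eq:lambdaprop}, I would rewrite $Q(\lambda_i)=\partial_{v_i}M$ in the form
\begin{equation*}
\nu(v)\,\lambda_i(v) = \K(\lambda_i)(v) - \partial_{v_i}M(v),
\end{equation*}
where $\K(\lambda_i)(v)=\int\sigma(v,v')\lambda_i(v')\,\d v'\,M(v)$. Using $\nu_1\leq\nu\leq\nu_2$ from \eqref{nuprop2}, the boundedness of $\sigma$ from \eqref{sigmaProp}, and the fact that $\lambda_i\in L^2_{M^{-1}}\subset L^1$ (so $\int|\lambda_i|\,\d v<\infty$), the term $\K(\lambda_i)$ is bounded by $C M(v)$; combining this with the assumption $|D_v M|\leq C M/(1+|v|)\leq C M$ from \eqref{derdecay2}, I obtain $\nu(v)|\lambda_i(v)|\leq C M(v)$, and dividing by $\nu_1$ yields $|\lambda_i(v)|\leq C M(v)$. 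This is precisely the $E=0$ specialization of the strategy used in Lemma \ref{lem:2} and in the proof of Theorem \ref{thm:FE}-(iii): the operator $\A$ reduces to multiplication by $\nu$ when $E=0$, so no integration along characteristics is needed and the bound is immediate.

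For the gradient bound, I would differentiate the equation $Q(\lambda_i)=\partial_{v_i}M$ with respect to $v_j$, obtaining
\begin{equation*}
\nu\,\partial_{v_j}\lambda_i = \partial_{v_j}\!\!\int\sigma(v,v')\lambda_i(v')\,\d v'\,M(v) + \K(\lambda_i)\,\frac{\partial_{v_j}M}{M} - (\partial_{v_j}\nu)\,\lambda_i + \partial_{v_i v_j}^2 M,
\end{equation*}
then bound each term on the right by $C M(v)$. The derivatives falling on $M$ are controlled by \eqref{derdecay2}, namely $|D_v M|\leq C M$ and $|D_v^2 M|\leq C M$; the derivative of $\sigma$ is controlled by \eqref{nuprop}; $\partial_{v_j}\nu$ is controlled by \eqref{nuprop2}; and the already-established bound $|\lambda_i|\leq C M$ handles the remaining factors. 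Dividing by $\nu\geq\nu_1$ then gives $|\partial_{v_j}\lambda_i(v)|\leq C M(v)$. The main technical point to be careful about is justifying that $\lambda_i$ is indeed differentiable and that one may differentiate under the integral sign in $\K(\lambda_i)$; this follows from the regularity of the inversion (since $\nu$ is bounded below and $\sigma,M$ are smooth enough by \eqref{nuprop} and \eqref{derdecay2}), but it is the step where I would take the most care rather than the algebraic estimates, which are routine given the assumptions.
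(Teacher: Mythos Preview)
Your proof is correct and follows essentially the same approach as the paper: existence and uniqueness via the coercivity of $Q$ (Lemma~\ref{lem:Q}) and the solvability condition $\int \nabla_v M\,\d v=0$, then the pointwise bound from the rewriting $\nu\lambda = \K(\lambda)-\nabla_v M$, and the gradient bound by differentiating this identity. (There is a minor sign slip in your displayed formula for $\nu\,\partial_{v_j}\lambda_i$---the term $\partial^2_{v_iv_j}M$ should carry a minus sign---but this does not affect the estimates.)
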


We will first prove Proposition \ref{prop:Fas} and then go back to Lemma \ref{lem:lambda0}.

\begin{proof}[Proof of Proposition \ref{prop:Fas}]
We  define 
$$  G(v,E) :=  F(v,E) - M(v)  - E\cdot \lambda(v) .$$
It solves
\begin{align}
\T (G) &  = 0 - \T  (M) - E \cdot \T(\lambda)\nonumber \\
& = - E \cdot \nabla_v M -   E \cdot  (-Q(\lambda)+ E \cdot \nabla_v \lambda) \nonumber \\
& =  - E \cdot  (E\cdot \nabla_v \lambda), \label{eq:Geq}
\end{align}
and thus we obtain in particular
$$ \|\T (G)\|_{L^2_{F ^{-1}}} \leq  \  |E|^2 \| D_v \lambda \|_{L^2_{F^{-1}}}. 
$$
If $|E|\leq 1$, then inequalities \eqref{eq:FEbound} and \eqref{eq:lambdaprop} give
$$ \| D_v \lambda \|^2_{L^2_{F^{-1}}} \leq C \int  \frac{M(v)^2}{F(v,E)}\, \d v \leq \frac{C}{c} \int M(v)\,\d v \leq   C$$
and so
$$ \|\T(G)\|_{L^2_{F ^{-1}}} \leq  C |E|^2 . 
$$
Using the coercivity inequality \eqref{coerineq} (recall that $|E|\leq 1$), and the fact that $\int_{\R^d} G \d v= 0$, 
we deduce
\begin{align*}
\|G\|^2_{L^2_{F ^{-1}}}  = \int \frac{|G |^2}{F }\d v & \leq \frac{1}{\theta} \int \T(G ) \frac{G }{F }\, \d v \\
& \leq \frac{1}{\theta} \| \T(G)\|_{L^2_{F^{-1}}}\|G\|_{L^2_{F^{-1}}}
\end{align*}
and so
$$\|G\|_{L^2_{F^{-1}}} 
 \leq   \frac{1}{\theta}
  \|\T(G)\|_{L^2_{F^{-1}}}
  \leq
  \frac{C}{\theta}|E|^2  , 
$$
which gives \eqref{eq:GL22}.

Finally, using \eqref{eq:Geq} and the definition of $\T$, we write 
$$ \nu G + E\cdot \nabla_v G = K(G) -  E \cdot  (E\cdot \nabla_v \lambda). 
$$
Thanks to \eqref{eq:GL22} we obtain
$$ | K( G) | \leq  \|G\|_{L^2_{F^{-1}}}  M(v)\leq C   |E|^2  M(v),
$$
which implies, using \eqref{eq:lambdaprop}, the following estimate:
$$ |  \nu G +   E\cdot \nabla_v G | \leq  C   |E |^2 M(v). 
$$
We conclude the proof by applying Lemma \ref{definqbo}.
\end{proof}
 
Finally, we end this section with a proof of  Lemma \ref{lem:lambda0} which states the existence of the function $\lambda(v)$:
\begin{proof}[Proof of Lemma \ref{lem:lambda0}]
The existence and uniqueness of $\lambda$ follows from the coercivity of the operator $Q$ (see Lemma \ref{lem:Q}) and the fact that 

$$ \int_{\R^d} \nabla_v M(v) \, dv =0.
$$
Using Lemma \ref{lem:Q} together with \eqref{derdecay2} we obtain

\begin{equation}\label{eq:pk}
\| \lambda\|_{L^2_{M^{-1}}} \leq \frac 1{\nu_1} \| \nabla M \|_{L^2_{M^{-1}}} \leq \frac{ C}{ \nu_1}.
\end{equation}
Next, we rewrite \eqref{eq:lambda} as
\begin{align}
 \lambda(v) & = \frac{1}{\nu(v)}\left( \K(\lambda)(v) - \nabla_v M(v)\right) \nonumber\\
 & =  \frac{1}{\nu(v)}\left( \int \sigma(v,v') \lambda(v')\, dv ' M(v) - \nabla_v M(v)\right), \label{eq:lambda2} 
 \end{align}
and use \eqref{eq:pk} together with \eqref{derdecay2} to deduce the first inequality in \eqref{eq:lambdaprop}.

Finally, differentiating \eqref{eq:lambda2} with respect to $v$ and using 
\eqref{nuprop}
and 
\eqref{derdecay2}, we easily deduce
the second inequality in  \eqref{eq:lambdaprop}.
\end{proof}

\section{A priori estimates}
In this section we derive the a priori estimates on $f_\eps$ solution of \eqref{kineticeq} which will be necessary for the proofs of Theorems \ref{thm:main} and \ref{thm:2}.

First, we introduce the operator
\begin{equation}\label{opTe}
 \T_\eps ( f) : = - Q ( f) + \eps^{ \alpha - 1} E \cdot \nabla_v f,
 \end{equation}
and we recall that $F_\eps(x,v,t)$ denotes the solution of 
$$ \T_\eps(F_\eps) =0 \, \qquad \int_{\R^d} F_\eps(x,v,t)\, dv =1.$$
In view of Theorem \ref{thm:FE} (i), such a function exists and can be written as
$$ F_\eps (x,v,t) = F(v,\eps^{\alpha-1} E(x,t)).$$
When $\alpha\geq 1$ and $E$ satisfies \eqref{eq:E}, Theorem \ref{thm:FE} (ii) implies:
\begin{proposition}\label{prop:FMbd}
Assume that $\alpha\geq 1$.
Then there exists two positive constants $\gamma_1$ and $\gamma_2$ such that
for all $0<\eps\leq 1$, the following holds:
$$ \gamma_1 M(v)\leq F_\eps(x,v,t) \leq \gamma_2 M(v).$$
\end{proposition}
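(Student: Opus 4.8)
The plan is to reduce the claim directly to Theorem \ref{thm:FE} (ii) by exhibiting a uniform bound on the rescaled acceleration field $\eps^{\alpha-1} E(x,t)$. The starting point is the identity $F_\eps(x,v,t) = F(v,\eps^{\alpha-1} E(x,t))$, which was recorded just before the proposition as a consequence of Theorem \ref{thm:FE} (i).

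First I would observe that assumption \eqref{eq:E} guarantees $E\in \left[W^{1,\infty}\left(\R^d\times(0,\infty)\right)\right]^d$, so that the quantity $R:=\|E\|_{L^\infty}$ is finite. Since $\alpha\geq 1$ and $0<\eps\leq 1$, we have $\eps^{\alpha-1}\leq 1$, and hence
$$
|\eps^{\alpha-1} E(x,t)| \leq \eps^{\alpha-1}\|E\|_{L^\infty} \leq R \qquad \text{for all } x\in\R^d,\ t>0,\ 0<\eps\leq 1.
$$
Thus the second argument of $F$ stays in the closed ball of radius $R$, uniformly in $\eps$, $x$ and $t$.

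I would then apply Theorem \ref{thm:FE} (ii) with this fixed $R$: since $|\eps^{\alpha-1}E(x,t)|\leq R$, the theorem yields
$$
c(R)\,M(v) \leq F(v,\eps^{\alpha-1} E(x,t)) \leq C(R)\,M(v) \qquad \text{for all } v\in\R^d,
$$
and setting $\gamma_1:=c(R)$ and $\gamma_2:=C(R)$ — constants depending only on $\|E\|_{L^\infty}$, and in particular independent of $\eps$, $x$ and $t$ — completes the proof.

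There is no genuine obstacle here: the entire analytic content is already carried by Theorem \ref{thm:FE} (ii), and the only point to verify is that the rescaled field $\eps^{\alpha-1}E$ remains confined to a fixed bounded region. This is precisely where the hypothesis $\alpha\geq 1$ enters, since for $\alpha<1$ the factor $\eps^{\alpha-1}$ diverges as $\eps\to 0$ and no such uniform $R$ exists, consistent with the discussion of the supercritical case in the introduction.
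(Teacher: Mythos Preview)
Your proposal is correct and follows exactly the same route as the paper, which simply states that the proposition follows from Theorem \ref{thm:FE} (ii) once one notes that $|\eps^{\alpha-1}E(x,t)|$ is bounded uniformly in $\eps$, $x$ and $t$ when $\alpha\geq 1$. Your write-up just makes this implication explicit.
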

Under the same conditions, Theorem \ref{thm:FE} (iii) and the chain rule imply:
\begin{proposition}\label{FpropGe}
 Assume that $\alpha\geq 1$.
Then  for all $\eps\leq 1$, the function $F_\eps$ satisfies: 
 \begin{itemize}
  \item[\emph{(i)}] $\displaystyle \bigg\lVert \frac{ \partial_t F_\eps}{ F_\eps} \bigg\lVert_{ L^\infty \left( \R^{ 2d} \times [ 0, \infty) \right)} \leq C \eps^{ \alpha - 1}  ,$
  \item[\emph{(ii)}] $\displaystyle \bigg\lVert \frac{ v \cdot \nabla_x F_\eps}{ F_\eps} \bigg\lVert_{ L^\infty \left( \R^{ 2d} \times [ 0, \infty) \right)} \leq C \eps^{ \alpha - 1}  ,$ 
 \end{itemize} 
 where $C$ is a positive constant depending on $\|E\|_{W^{1,\infty}}$ but not on $\eps$.
\end{proposition}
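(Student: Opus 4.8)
The plan is to differentiate the representation $F_\eps(x,v,t)=F(v,\eps^{\alpha-1}E(x,t))$ by the chain rule and feed the result into the weighted pointwise bound \eqref{pEF} of Theorem \ref{thm:FE}(iii). The first step is to record that, because $\alpha\geq 1$ and $\eps\leq 1$, the argument satisfies $|\eps^{\alpha-1}E(x,t)|\leq\|E\|_{L^\infty}=:R$ uniformly in $(\eps,x,t)$; hence Theorem \ref{thm:FE}(ii)--(iii) are available with this single fixed $R$, and in particular \eqref{pEF} gives $|\partial_E F(v,\eps^{\alpha-1}E)|\leq C(R)\,F_\eps/(1+|v|)$. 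This uniform control of the $E$-argument is what makes the constants in (i) and (ii) independent of $\eps$.

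For part (i) I would write $\partial_t F_\eps=\eps^{\alpha-1}\,\partial_E F(v,\eps^{\alpha-1}E)\cdot\partial_t E(x,t)$ and divide by $F_\eps$; the weighted bound together with $|\partial_t E|\leq\|E\|_{W^{1,\infty}}$ then gives
$$\left|\frac{\partial_t F_\eps}{F_\eps}\right|\leq \eps^{\alpha-1}\,\frac{C(R)}{1+|v|}\,\|E\|_{W^{1,\infty}}\leq C\eps^{\alpha-1},$$
since $1/(1+|v|)\leq 1$. Part (ii) is handled the same way after noting that $\partial_{x_k}F_\eps=\eps^{\alpha-1}\sum_j\partial_{E_j}F\,\partial_{x_k}E_j$, so that
$$\left|\frac{v\cdot\nabla_x F_\eps}{F_\eps}\right|\leq \eps^{\alpha-1}\,|v|\,\frac{|\partial_E F|}{F_\eps}\,\|\nabla_x E\|\leq \eps^{\alpha-1}\,C(R)\,\frac{|v|}{1+|v|}\,\|E\|_{W^{1,\infty}}\leq C\eps^{\alpha-1}.$$

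The one place where anything non-trivial happens is part (ii): the transport factor $v$ contributes an unbounded weight $|v|$, and the whole estimate hinges on this being exactly compensated by the $(1+|v|)^{-1}$ decay built into \eqref{pEF}, through $|v|/(1+|v|)\leq 1$. This is precisely why Theorem \ref{thm:FE}(iii) was stated with a weighted right-hand side rather than a plain bound on $\partial_E F$; without it the quotient $v\cdot\nabla_x F_\eps/F_\eps$ would grow linearly in $|v|$ and fail to lie in $L^\infty$. Everything else reduces to the chain rule and the uniform bound $|\eps^{\alpha-1}E|\leq R$, so no additional a priori control of $F_\eps$ is needed, and the resulting constant $C$ depends only on $R$, hence on $\|E\|_{W^{1,\infty}}$.
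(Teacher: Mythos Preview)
Your proof is correct and follows essentially the same approach as the paper: apply the chain rule to $F_\eps(x,v,t)=F(v,\eps^{\alpha-1}E(x,t))$ and invoke the weighted bound \eqref{pEF}, noting that the factor $|v|$ coming from the transport term in (ii) is exactly cancelled by the $(1+|v|)^{-1}$ decay of $\partial_E F$. The paper's proof only writes out (ii) explicitly (calling (i) easier), and your presentation is in fact slightly more detailed.
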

\begin{proof}
We only prove the second inequality (the first one is easier): We have 
$$ v \cdot \nabla_x F_\eps = \partial_E F(v,\eps^{\alpha-1} E(x,t)) \eps^{\alpha-1} v \cdot \nabla_x E
$$
and so \eqref{pEF} and the fact that $\alpha\geq 1$ implies
$$ 
|v \cdot \nabla_x F_\eps| \leq C F_\eps \frac{ \eps^{\alpha-1} v \cdot \nabla_x E}{1+|v|} \leq  C  \eps^{\alpha-1} \| \nabla E\|_{L^\infty} F_\eps ,
$$
which proves (ii).
\end{proof}
Finally, Proposition \ref{prop:coer} implies
\begin{proposition}\label{prop:coere}
Assume that $\alpha\geq 1$.
Then for all $\eps\leq 1$ there holds
  \begin{equation}
  \int \T_\eps ( f)(v) \frac{ f(v)}{ F_\eps} \d v \geq \theta(R) \lVert f - \rho_f F_\eps \lVert^2_{ L^2_ {F_\eps^{-1}}(\R^d)}, \quad \text{ for all } f \in L^2_ {F_\eps^{-1}}(\R^d) \, . \label{coerineqe}
 \end{equation}
\end{proposition}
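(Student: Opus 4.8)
The plan is to observe that, for fixed $(x,t)$, the operator $\T_\eps$ is nothing but the operator $\T$ of \eqref{eq:TE} built from the rescaled acceleration field $\widetilde E := \eps^{\alpha-1} E(x,t) \in \R^d$, and that $F_\eps(x,v,t) = F(v,\widetilde E)$ is precisely the associated normalized equilibrium. Consequently, \eqref{coerineqe} should follow directly from the already-established coercivity estimate \eqref{coerineq} of Proposition \ref{prop:coer}, applied pointwise in $(x,t)$, the only genuine issue being the uniformity of the constant in $\eps$, $x$ and $t$.

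First I would freeze $(x,t)\in\R^d\times[0,\infty)$ and set $\widetilde E = \eps^{\alpha-1} E(x,t)$. By definition \eqref{opTe}, $\T_\eps(f)(v) = -Q(f)(v) + \widetilde E\cdot\nabla_v f(v)$, which is exactly $\T(f)$ from \eqref{eq:TE} with the constant vector $\widetilde E$; moreover, as recalled just before \eqref{eq:kernelF}, the function $F_\eps(x,v,t)=F(v,\widetilde E)$ solves \eqref{eq:kernelF} with field $\widetilde E$. The next step is to control $|\widetilde E|$ uniformly: since $\alpha\geq 1$ and $0<\eps\leq 1$ we have $\eps^{\alpha-1}\leq 1$, and since $E\in [W^{1,\infty}]^d$ by \eqref{eq:E} we have $\|E\|_{L^\infty}<\infty$. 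Setting $R:=\|E\|_{L^\infty(\R^d\times(0,\infty))}$ we obtain $|\widetilde E| = \eps^{\alpha-1}|E(x,t)|\leq R$ for every $\eps\leq 1$ and every $(x,t)$.

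It then remains to apply Proposition \ref{prop:coer} with $E$ replaced by $\widetilde E$. Since $|\widetilde E|\leq R$, the estimate \eqref{coerineq} yields, for every $f\in L^2_{F_\eps^{-1}}(\R^d)$,
\[
\int \T_\eps(f)(v)\,\frac{f(v)}{F_\eps}\,\d v
= \int \T(f)(v)\,\frac{f(v)}{F(v,\widetilde E)}\,\d v
\geq \theta(R)\,\lVert f-\rho_f F_\eps\rVert^2_{L^2_{F_\eps^{-1}}(\R^d)},
\]
with the very same constant $\theta(R)$ as in Proposition \ref{prop:coer}. The crucial and only delicate point is that $\theta(R)$ depends solely on the radius $R$ bounding the field, and not on the particular value of $\widetilde E$; hence it is independent of $\eps$, $x$ and $t$. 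This uniformity is what the bound $\eps^{\alpha-1}\leq 1$ (guaranteed by $\alpha\geq 1$) is used to secure, and it is precisely the step I expect to require the most care, the rest of the argument being a direct translation of the already-proved inequality \eqref{coerineq}. This establishes \eqref{coerineqe}.
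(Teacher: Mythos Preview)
Your proposal is correct and matches the paper's approach exactly: the paper simply states that Proposition~\ref{prop:coer} implies Proposition~\ref{prop:coere}, and your argument spells out precisely this implication by freezing $(x,t)$, setting $\widetilde E=\eps^{\alpha-1}E(x,t)$, and using $\alpha\geq 1$, $\eps\leq 1$, and \eqref{eq:E} to bound $|\widetilde E|\leq R:=\|E\|_{L^\infty}$ uniformly so that the constant $\theta(R)$ from \eqref{coerineq} is independent of $\eps$, $x$, $t$.
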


We can now prove the main result of this section:
\begin{proposition}\label{apriorieq1}
Assume that $\alpha \in [1,2)$ and that 
\eqref{eq:E}-\eqref{nuprop} hold. Let $f_\eps$ be the 
solution of  \eqref{kineticeq}
and let $\rho_\eps(x,t)= \int_{\R^d} f_\eps(x,v,t)\d v$.
Then: 
\begin{itemize}
 \item[ \emph{(i)} ] The sequence $\left( f_\eps \right)$ is  bounded uniformly with respect to $\eps$ in $L^\infty \left( \left( 0, \infty \right); L^1 \left( \R^d \times \R^d \right) \right)$ and 
$\left( \rho_\eps \right)$ is bounded uniformly with respect to $\eps$  in $L^\infty \left( \left( 0, \infty \right); L^1 \left( \R^d \right) \right)$. 
 \item[ \emph{(ii)} ] For all $T>0$, 
 $( f_\eps )$ is  bounded uniformly with respect to $\eps$ in $L^\infty \left( ( 0, T); L^2_{ M^{ -1}} \left( \R^{ 2d} \right) \right)$,
 and 
$( \rho_\eps )$ is bounded uniformly with respect to $\eps$  in  $L^\infty \left( ( 0, T); L^2 \left( \R^d \right) \right)$. 
 \item[ \emph{(iii)} ] The function $f_\eps$ can be decomposed as 
 $f_\eps = \rho_\eps F_\eps + g_\eps$ where 
 $g_\eps$ satisfies 
\begin{equation}\label{eq:boundg}
\| g_\eps\|_{ L^2 ( ( 0, T), L^2_{ M^{ -1}} ( \R^{ 2d}))} \leq C(T) \eps^{\alpha/2}.
\end{equation}
\end{itemize}
\end{proposition}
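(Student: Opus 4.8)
The plan is to establish all three parts by a single energy method built around the multiplier $f_\eps/F_\eps$, so that the singular force term is absorbed into the coercive operator $\T_\eps$ rather than treated separately. The reason $F_\eps$ (and not the fixed Maxwellian $M$) must serve as the weight is that multiplying \eqref{kineticeq} by $f_\eps/M$ would leave the force term $\eps^{-(2-\alpha)}E\cdot\nabla_v f_\eps$ uncontrolled, whereas weighting by $F_\eps$ lets the force term combine with the collision term into $\T_\eps$, which is coercive by Proposition \ref{prop:coere}.

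For part (i), I would first use that \eqref{kineticeq} preserves positivity, so $f_\eps\geq 0$. Integrating the equation over $x$ and $v$, the collision term vanishes by mass conservation ($\int Q(f)\,\d v=0$), the transport term integrates to zero in $x$, and the force term integrates to zero in $v$; hence $\int\!\int f_\eps\,\d x\,\d v$ is conserved and equals $\|f^{in}\|_{L^1}$. Since $\rho_\eps\geq 0$, the $L^1(\R^d)$ bound on $\rho_\eps$ follows at once.

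For part (ii), the starting point is to multiply \eqref{kineticeq} by $\eps$ and rewrite it as
$$\eps^\alpha\partial_t f_\eps+\eps\, v\cdot\nabla_x f_\eps+\T_\eps(f_\eps)=0.$$
Multiplying by $f_\eps/F_\eps$ and integrating in $x$ and $v$, and setting $\Phi_\eps(t)=\tfrac12\|f_\eps\|^2_{L^2_{F_\eps^{-1}}}$, the time term yields $\eps^\alpha\frac{d}{dt}\Phi_\eps$ plus an error involving $\partial_t F_\eps/F_\eps$; the transport term, after integration by parts in $x$, produces only an error involving $v\cdot\nabla_x F_\eps/F_\eps$; and the $\T_\eps$ term is nonnegative by Proposition \ref{prop:coere}. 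The essential observation is that both error terms are controlled by Proposition \ref{FpropGe}, which bounds $|\partial_t F_\eps/F_\eps|$ and $|v\cdot\nabla_x F_\eps/F_\eps|$ by $C\eps^{\alpha-1}$, so that each is of size $O(\eps^\alpha)\Phi_\eps$. Dropping the nonnegative coercivity term and dividing by $\eps^\alpha$ gives $\frac{d}{dt}\Phi_\eps\leq C\Phi_\eps$; Gr\"onwall's inequality, the equivalence of the $L^2_{F_\eps^{-1}}$ and $L^2_{M^{-1}}$ norms (Proposition \ref{prop:FMbd}), and the assumption $f^{in}\in L^2_{M^{-1}}$ then yield the stated $L^\infty(0,T;L^2_{M^{-1}})$ bound. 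The $L^2$ bound on $\rho_\eps$ follows from Cauchy--Schwarz, since $\int M\,\d v=1$ gives $|\rho_\eps|^2\leq\int f_\eps^2/M\,\d v$.

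For part (iii), I would instead retain the coercivity term and integrate the energy identity over $(0,T)$. Writing $g_\eps=f_\eps-\rho_\eps F_\eps$ (so that $\int g_\eps\,\d v=0$), Proposition \ref{prop:coere} bounds the time integral of the $\T_\eps$ term below by $\theta\int_0^T\|g_\eps\|^2_{L^2_{F_\eps^{-1}}}\,\d t$, while the integrated identity bounds it above by $\eps^\alpha\Phi_\eps(0)$ plus the time integral of the two error terms, which by part (ii) is $O(\eps^\alpha)$. Hence $\int_0^T\|g_\eps\|^2_{L^2_{F_\eps^{-1}}}\,\d t\leq C(T)\eps^\alpha$, and the norm equivalence gives \eqref{eq:boundg} after taking square roots. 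The main obstacle throughout is the control of the two error terms generated by the $x$- and $t$-dependence of $F_\eps$; these arise precisely because we weight by $F_\eps$ rather than $M$, and their estimation rests entirely on Proposition \ref{FpropGe}, and ultimately on the delicate derivative bound \eqref{pEF} of Theorem \ref{thm:FE}(iii). Everything else is a standard energy-plus-Gr\"onwall argument.
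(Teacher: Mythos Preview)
Your proposal is correct and follows essentially the same approach as the paper: rewrite \eqref{kineticeq} as $\eps^\alpha\partial_t f_\eps+\eps\,v\cdot\nabla_x f_\eps+\T_\eps(f_\eps)=0$, multiply by $f_\eps/F_\eps$, use Proposition~\ref{FpropGe} to bound the two commutator terms coming from the $(x,t)$-dependence of $F_\eps$, apply the coercivity of Proposition~\ref{prop:coere}, and conclude via Gr\"onwall and the norm equivalence of Proposition~\ref{prop:FMbd}. Your identification of why the weight must be $F_\eps$ rather than $M$, and of Proposition~\ref{FpropGe} (hence \eqref{pEF}) as the crux for the error terms, matches the paper exactly.
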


\begin{proof}
Integrating \eqref{kineticeq} with respect to $x$ and $v$ and thanks to the conservation of mass property of the operator $Q$ we obtain
that $(f_\eps)$ is uniformly bounded in $L^\infty ( \left( 0, \infty \right); L^1( \R^{ 2d}) )$. Next using \eqref{opTe}, we recast \eqref{kineticeq} as
$$
\varepsilon^\alpha \partial_t f_\varepsilon + \varepsilon v \cdot \nabla_x f_\varepsilon +  \T_\eps ( f_\varepsilon) = 0.
$$
Multiplying this equation
by $f_\eps / F_\eps$ and integrating with respect to $x$ and $v$ we get:
\begin{align}
 \frac{ \eps^\alpha}{ 2} \frac{ \d}{ \d t} \lVert f_\eps \lVert^2_{ L^2_{ F^{ -1}_\eps} ( \R^{ 2d}) } & = - \frac{ \eps^{ \alpha}}{ 2} \int \int \frac{ \partial_t F_\eps}{ F_\eps} \frac{ f^2_\eps}{ F_\eps} \d v \d x
 + \frac{ \eps}{ 2} \int \int \frac{ v \cdot \nabla_x F_\eps}{ F_\eps} \frac{ f^2_\eps}{ F_\eps} \d v \d x \nonumber \\
                                                                         & \quad + \int \int \T_\eps ( f_\eps) \frac{ f_\eps}{ F_\eps} \d v \d x. \nonumber
\end{align} 
Using  \eqref{coerineqe} and Proposition \ref{FpropGe}, we deduce
\begin{equation}\label{eq:apriori}
 \frac{ \eps^\alpha}{ 2} \frac{ \d}{ \d t} \lVert f_\eps \lVert^2_{ L^2_{ F^{ -1}_\eps} ( \R^{ 2d}) }+ \theta \lVert f_\eps - \rho_{\eps} F_\eps \lVert^2_{ L^2_{ F^{ -1}_\eps} ( \R^{ 2d}) } \leq \eps^{ \alpha} C \lVert f_\eps \lVert^2_{ L^2_{ F^{ -1}_\eps} ( \R^{ 2d}) } .
\end{equation}

In particular this yields
\[
 \frac{ \d}{ \d t} \lVert f_\eps \lVert^2_{ L^2 ( \d v \d x /  F_\eps)} \leq 2 C \lVert f_\eps \lVert^2_{ L^2 ( \d v \d x /  F_\eps)},
\]
and Gronwall's Lemma implies that $(f_\eps)$ is uniformly bounded in 
$L^\infty \left( ( 0, T ); L^2_{ F_\eps^{ -1}} ( \R^{ 2d} ) \right)$ for any $T>0$
and thus in $L^\infty \left( ( 0, T ); L^2_{ M^{ -1}} ( \R^{ 2d}) \right)$ thanks to Proposition \ref{prop:FMbd}.
We also deduce that
\[
 \int \rho^2_\eps \d x = \int \left( \int f_\eps \d v \right)^2 \d x \leq \int \int \frac{ f^2_\eps}{ F_\eps} \d v \d x \leq C. 
\]



\noindent Finally, integrating \eqref{eq:apriori} with respect to $t$ and using Proposition \ref{prop:FMbd}, we obtain \eqref{eq:boundg}.

\end{proof}

\section{Proof of Theorem \ref{thm:main}}
The proof of our main result relies on  the test function method first introduced in \cite{MR2815035}.
The starting point of the method
is the introduction of the following auxiliary test function:
Given $\phi(x,t) \in \mathcal D ( \R^N \times [0,\infty))$, we denote by $\chi_\varepsilon(x,v,t)$ the unique bounded solution of the auxiliary problem
\begin{equation}\label{auxiliaryeq}
\nu(v)\chi_\eps - \varepsilon v \cdot \nabla_x \chi_\eps = \nu(v)\phi \,,
\end{equation}
which (integrating \eqref{auxiliaryeq} along the characteristics) yields:
\begin{equation}\label{explicitaux}
\chi_\varepsilon ( x, v, t) = \int_0^\infty \e^{-\nu(v)z}\nu(v) \phi ( x + \varepsilon v z, t) \d z \,. 
\end{equation}
We then have:
\begin{lemma}
Let $f_\eps$ be a weak solution of  \eqref{kineticeq} and let $\chi_\eps$ be given by \eqref{explicitaux}.
Then the following weak formulation holds:
 \begin{align}
 &  \int\int\int f_\eps \dt \chi_\eps \d v\d x\d t + \int\int f^{ in} \chi_\eps |_{ t=0} \d v\d x 
 + \eps^{-\alpha} \int \int\int \rho_\eps\nu  F_\eps (\chi_\eps - \phi)\d v\d x\d t  \nonumber \\ 
 & \qquad = -\eps^{-1} \int  \int\int g_\eps ( E \cdot \dv \chi_\eps) \d v\d x\d t 
-\eps^{-\alpha} \int  \int\int K(g_\eps) (\chi_\eps -\phi)  \d v\d x\d t  
, \label{weakform}
 \end{align}
with
\begin{equation}\label{eq:g} 
g_\eps = f_\eps-\rho_\eps F_\eps, \qquad \rho_\eps = \int_{\R^d} f_\eps \, \d v.
\end{equation}
 \end{lemma}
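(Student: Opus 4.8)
The goal is to establish the weak formulation \eqref{weakform} starting from the kinetic equation \eqref{kineticeq}. The plan is to test the equation against $\chi_\eps$ (rather than against $\phi$ directly), exploiting the fact that $\chi_\eps$ was engineered to solve the auxiliary problem \eqref{auxiliaryeq}. First I would write the weak (distributional) formulation of \eqref{kineticeq}: for the test function $\chi_\eps$, multiplying the equation
$$ \eps^{\alpha-1}\dt f_\eps + v\cdot\nabla_x f_\eps + \eps^{-(2-\alpha)} E\cdot\dv f_\eps = \eps^{-1} Q(f_\eps) $$
by $\chi_\eps$ and integrating over $\R^d\times\R^d\times(0,\infty)$, then moving all derivatives off $f_\eps$ by integration by parts. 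The time derivative produces $-\int\!\!\int\!\!\int f_\eps\,\dt\chi_\eps$ together with the boundary term $-\int\!\!\int f^{in}\chi_\eps|_{t=0}$ carrying the initial data (the sign conventions will be fixed by the $\eps^{\alpha-1}$ factor, which I expect to be absorbed after dividing through).

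The central idea is to combine the transport term $v\cdot\nabla_x f_\eps$ and the collision term $\eps^{-1}Q(f_\eps)$ using the defining equation for $\chi_\eps$. After integrating by parts in $x$, the transport term contributes $-\eps^{\alpha-1}\int\!\!\int\!\!\int f_\eps\, v\cdot\nabla_x\chi_\eps$ (up to the scaling factor), and I would rewrite this using \eqref{auxiliaryeq}, namely $\eps\, v\cdot\nabla_x\chi_\eps = \nu(v)(\chi_\eps-\phi)$, so that the transport term becomes proportional to $\int\!\!\int\!\!\int f_\eps\,\nu(v)(\chi_\eps-\phi)$. This is the key algebraic substitution that ties the two operators together. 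For the collision term, I would use the duality/self-adjointness structure of $Q$ in the $L^2_{M^{-1}}$ pairing together with the identity $Q(M)=0$; testing $Q(f_\eps)$ against $\chi_\eps$ and using that $\chi_\eps-\phi$ is the relevant discrepancy, the term $\int Q(f_\eps)\chi_\eps$ can be rearranged into contributions involving $\nu(v)(\chi_\eps-\phi)$ acting on $f_\eps$.

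Next I would insert the decomposition $f_\eps = \rho_\eps F_\eps + g_\eps$ from \eqref{eq:g}. The $\rho_\eps F_\eps$ part produces the term $\eps^{-\alpha}\int\!\!\int\!\!\int \rho_\eps\,\nu\,F_\eps(\chi_\eps-\phi)$ appearing on the left-hand side, while the $g_\eps$ part, once the collision operator $K$ and the force term $E\cdot\dv\chi_\eps$ are accounted for, yields exactly the two remainder integrals on the right-hand side of \eqref{weakform}, with their respective powers $\eps^{-1}$ and $\eps^{-\alpha}$. The force term $\eps^{-(2-\alpha)}E\cdot\dv f_\eps$ integrates by parts in $v$ to give $-\eps^{-(2-\alpha)}\int\!\!\int\!\!\int f_\eps\, E\cdot\dv\chi_\eps$; since $F_\eps$ solves $\T_\eps(F_\eps)=0$, the $\rho_\eps F_\eps$ contribution of this term cancels against part of the collision contribution, leaving only the $g_\eps$ piece, which after the correct bookkeeping of powers of $\eps$ becomes the $\eps^{-1}\int\!\!\int\!\!\int g_\eps(E\cdot\dv\chi_\eps)$ term.

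The main obstacle I anticipate is the careful bookkeeping of the powers of $\eps$ and the precise cancellation coming from $\T_\eps(F_\eps)=0$: one must verify that the $\rho_\eps F_\eps$ contributions from the force term and from the collision term combine, via the defining relation $\eps^{\alpha-1}E\cdot\dv F_\eps = Q(F_\eps)$, into precisely the single left-hand-side term $\eps^{-\alpha}\int\!\!\int\!\!\int\rho_\eps\,\nu\,F_\eps(\chi_\eps-\phi)$, with no leftover pieces. Everything else is routine integration by parts justified by the a priori bounds of Proposition \ref{apriorieq1}, which guarantee that all integrals are finite and the boundary terms at $t\to\infty$ vanish; the boundedness of $\chi_\eps$ (noted explicitly in \eqref{explicitaux}) ensures the pairings make sense.
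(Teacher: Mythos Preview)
Your approach is essentially the same as the paper's: test \eqref{kineticeq} against $\chi_\eps$, integrate by parts, use the auxiliary relation \eqref{auxiliaryeq} to replace $\eps v\cdot\nabla_x\chi_\eps$ by $\nu(\chi_\eps-\phi)$, decompose $f_\eps=\rho_\eps F_\eps+g_\eps$, and exploit $\T_\eps(F_\eps)=0$ so that the $\rho_\eps F_\eps$ contributions of the force and collision terms collapse into $\eps^{-\alpha}\int\rho_\eps\nu F_\eps(\chi_\eps-\phi)$.

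One small correction: the collision term is \emph{not} handled via self-adjointness of $Q$ in $L^2_{M^{-1}}$ or the identity $Q(M)=0$; neither is used here. Writing $Q=\K-\nu$, the relevant identity is the mass-conservation property $\int_{\R^d}\K(f)\,\d v=\int_{\R^d}\nu f\,\d v$ (immediate from the symmetry of $\sigma$). Since $\phi$ is independent of $v$, this gives $\int \K(f_\eps)\phi\,\d v=\int\nu f_\eps\phi\,\d v$, so $\eps^{-\alpha}\int\K(f_\eps)\chi_\eps-\eps^{-\alpha}\int\nu f_\eps\phi=\eps^{-\alpha}\int\K(f_\eps)(\chi_\eps-\phi)$. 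This is the precise mechanism producing the $\K(g_\eps)(\chi_\eps-\phi)$ term on the right-hand side; your description of it was a bit off, but the rest of your outline is correct.
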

\begin{proof}
Taking $\chi_\eps$ as a test function in  \eqref{kineticeq} and using \eqref{auxiliaryeq}, we get
 \begin{align*}
 & - \int \int\int f_\eps \dt \chi_\eps \d v\d x\d t - \int\int f^{ in} \chi_\eps |_{ t=0} \d v\d x  \\
 & = \eps^{-1}  \int \int\int f_\eps E\cdot \nabla_v \chi_\eps \d v\d x\d t
+\eps^{-\alpha}\int \int\int K(f_\eps) \chi_\eps - \nu f_\eps \phi \d v\d x\d t\\
 & = \eps^{-1}  \int \int\int f_\eps E\cdot \nabla_v \chi_\eps \d v\d x\d t
+\eps^{-\alpha}\int \int\int K(f_\eps) (\chi_\eps - \phi) \d v\d x\d t,
\end{align*} 
where we used the fact that $\int K(f)\, \d v = \int \nu f \d v$ for all $f$.
Using \eqref{eq:g}, we deduce:
 \begin{align*}
 & - \int \int\int f_\eps \dt \chi_\eps \d v\d x\d t - \int\int f^{ in} \chi_\eps |_{ t=0} \d v\d x  \\
 & = \eps^{-1}  \int \int\int \rho_\eps F_\eps E\cdot \nabla_v \chi_\eps \d v\d x\d t
+\eps^{-\alpha}\int \int\int \rho_\eps K(F_\eps) (\chi_\eps-  \phi) \d v\d x\d t\\
& \quad +  \eps^{-1}  \int \int\int g_\eps E\cdot \nabla_v \chi_\eps \d v\d x\d t
+\eps^{-\alpha}\int \int\int K(g_\eps) (\chi_\eps-  \phi) \d v\d x\d t.
\end{align*} 
Finally, using the definition of $F_\eps$ and  the fact that $\int K(F)\, \d v = \int \nu F \d v$, we find
\begin{align*}
&  \eps^{-1} \int   F_\eps E\cdot \nabla_v \chi_\eps \d v 
+\eps^{-\alpha} \int  K(F_\eps) (\chi_\eps-  \phi) \d v \\
& \qquad =
- \eps^{-1} \int    (E\cdot \nabla_v F_\eps) \chi_\eps \d v 
+\eps^{-\alpha} \int  K(F_\eps) (\chi_\eps-  \phi) \d v \\
&\qquad  =
- \eps^{-\alpha} \int  (K(F_\eps)-\nu F_\eps) \chi_\eps \d v 
+\eps^{-\alpha} \int  K(F_\eps) \chi_\eps- \nu F_\eps \phi \d v \\
&\qquad  =
 \eps^{-\alpha} \int \nu F_\eps (\chi_\eps-\phi) \d v
\end{align*}
which concludes the proof.
\end{proof}

In order to prove Theorem \ref{thm:main}
we need to show that the right hand side of \eqref{weakform} goes to zero, and to identify the limit of the left hand side. The first point 
follows from the following result.

\begin{proposition}\label{prop:2}
For any test function $\phi \in \mathcal D(\R^N\times [0,\infty))$, let $\chi_\eps$ be defined by  \eqref{explicitaux}. 
Then
$$\lim_{\eps\rightarrow 0}  \eps^{-\alpha} \int \int \int K(g_\eps) (\chi_\eps -\phi)  \, dx\, dv\,dt= 0,
$$
and
$$\lim_{\eps\rightarrow 0} \eps^{-1} \int  \int\int g_\eps  E \cdot \dv \chi_\eps \d v\d x\d t  = 0.
$$
\end{proposition}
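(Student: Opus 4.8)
The plan is to control both limits using the bound $\|g_\eps\|_{L^2((0,T),L^2_{M^{-1}})}\leq C\eps^{\alpha/2}$ from Proposition~\ref{apriorieq1}(iii), together with a quantitative estimate on the difference $\chi_\eps-\phi$ and on $\nabla_v\chi_\eps$ that shows each carries enough powers of $\eps$ to beat the negative prefactors $\eps^{-\alpha}$ and $\eps^{-1}$. The key structural observation is that $\chi_\eps\to\phi$ as $\eps\to 0$ (since the transport term in \eqref{auxiliaryeq} vanishes), so $\chi_\eps-\phi$ is small; the main work is to quantify exactly \emph{how} small, measured against the weight needed to pair with $g_\eps$.

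\medskip

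\noindent\textbf{First limit.} For the term $\eps^{-\alpha}\int\int\int K(g_\eps)(\chi_\eps-\phi)$, I would first apply Cauchy--Schwarz in $L^2_{M^{-1}}$ and $L^2_M$ respectively, writing the integrand as $K(g_\eps)M^{-1/2}\cdot(\chi_\eps-\phi)M^{1/2}$. Since $K$ is bounded on $L^2_{M^{-1}}$ (it is Hilbert--Schmidt, as recorded after \eqref{eq:TE}), we have $\|K(g_\eps)\|_{L^2_{M^{-1}}}\leq C\|g_\eps\|_{L^2_{M^{-1}}}\leq C\eps^{\alpha/2}$ after integrating in time. It then suffices to show that $\|(\chi_\eps-\phi)\|_{L^2_M}$ is of order $\eps^{\alpha/2}$ (uniformly in $t$, or in $L^2_t$), so that the product beats $\eps^{-\alpha}$. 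From \eqref{explicitaux} I would write
\begin{equation*}
\chi_\eps(x,v,t)-\phi(x,t)=\int_0^\infty e^{-\nu(v)z}\nu(v)\bigl[\phi(x+\eps vz,t)-\phi(x,t)\bigr]\,\d z,
\end{equation*}
and exploit the heavy tail $M(v)\sim\gamma|v|^{-d-\alpha}$: the increment $\phi(x+\eps vz,t)-\phi(x,t)$ is bounded by $\min(2\|\phi\|_\infty,\eps|v|z\|\nabla\phi\|_\infty)$, and integrating this against $M(v)\,\d v$ produces exactly the anomalous scale $\eps^\alpha$ because $\int|v|^\theta M\,\d v$ converges only for $\theta<\alpha$. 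This is the computation at the heart of the fractional-diffusion scaling, and $\eps^{-\alpha}\cdot\eps^{\alpha/2}\cdot\eps^{\alpha/2}=\eps^0$-type bookkeeping must be done carefully; I expect to gain a small positive power from the $L^2$-in-time integration and from the strict inequality $\theta<\alpha$, yielding convergence to zero.

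\medskip

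\noindent\textbf{Second limit.} For $\eps^{-1}\int\int\int g_\eps\,E\cdot\nabla_v\chi_\eps$, the same Cauchy--Schwarz split gives $\eps^{-1}\|g_\eps\|_{L^2_{M^{-1}}}\|E\cdot\nabla_v\chi_\eps\|_{L^2_M}\leq C\eps^{\alpha/2-1}\|\nabla_v\chi_\eps\|_{L^2_M}$. Differentiating \eqref{explicitaux} in $v$ brings down a factor $\eps z\,\nabla_x\phi$ from the argument $x+\eps vz$, plus a term from differentiating $e^{-\nu(v)z}\nu(v)$ which is controlled using \eqref{nuprop2}; the dominant contribution is $\eps\int_0^\infty e^{-\nu z}\nu z\,v\cdot\nabla_x\nabla\phi\,\d z$-type, so $\nabla_v\chi_\eps$ carries a factor $\eps|v|$. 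Pairing $\eps|v|$ against the weight $M$ in $L^2_M$ again meets the heavy-tail obstruction: $\int|v|^2M\,\d v=\infty$ for $\alpha<2$, so one cannot simply bound $\|v\nabla_v\chi_\eps\|_{L^2_M}$ crudely. The resolution is the same truncation/interpolation used above, splitting the velocity integral at $|v|\sim 1/\eps$, which trades the divergent moment for a compensating power of $\eps$, ultimately producing $\|\nabla_v\chi_\eps\|_{L^2_M}\lesssim\eps^{\alpha-1}$ (matching the order of the field term). Then $\eps^{\alpha/2-1}\cdot\eps^{\alpha-1}$ together with the time integration gives a positive power of $\eps$.

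\medskip

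\noindent\textbf{Main obstacle.} The crux in both limits is the \emph{same} delicate moment estimate forced by the heavy tail: because $M$ has infinite second moment for $\alpha<2$, every naive bound on $\chi_\eps-\phi$ or $\nabla_v\chi_\eps$ against $M$ diverges, and one must instead extract the anomalous power $\eps^\alpha$ (resp.\ $\eps^{\alpha-1}$) by a careful dyadic or threshold decomposition of the $v$-integral that balances the regions $|v|\lesssim 1/\eps$ and $|v|\gtrsim 1/\eps$. Getting the exact exponents right, so that they strictly beat $\eps^{-\alpha}$ and $\eps^{-1}$ with room to spare, is the technical heart of the proof; everything else (the Hilbert--Schmidt bound on $K$, the a~priori bound on $g_\eps$, and the explicit representation \eqref{explicitaux}) is available from the preceding results.
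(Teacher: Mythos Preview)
Your plan has a genuine gap in the first limit and a miscalculation in the second.

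\textbf{First limit.} Your Cauchy--Schwarz split $\|K(g_\eps)\|_{L^2_{M^{-1}}}\,\|\chi_\eps-\phi\|_{L^2_M}$ gives exactly $O(1)$, not $o(1)$. The bound $\|\chi_\eps-\phi\|_{L^2_M}\lesssim \eps^{\alpha/2}$ is \emph{sharp}: your own threshold decomposition at $|v|\sim 1/\eps$ yields $\int M|\chi_\eps-\phi|^2\,\d v\,\d x\sim \eps^\alpha$, and you cannot squeeze out any extra power from the $L^2_t$ integration (both factors already live in $L^2_t$) or from ``$\theta<\alpha$'' (in the $L^2_M$ framework the critical exponent is $\alpha/2$, not $\alpha$). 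What saves the paper is the \emph{additional structure} of $K$: since $|K(g_\eps)(x,v,t)|\leq \nu_2\|g_\eps(x,\cdot,t)\|_{L^2_{M^{-1}}}\,M(v)$, one can pair the explicit factor $M(v)$ against $|\chi_\eps-\phi|$ in $L^1_v$ before applying Cauchy--Schwarz in $x,t$. The relevant quantity becomes $\big\|\int M|\chi_\eps-\phi|\,\d v\big\|_{L^2_x}$, and this is $O(\eps^\eta)$ for \emph{any} $\eta<\alpha$ (the paper's Lemma~\ref{lem:eta}): the Cauchy--Schwarz inside that lemma splits $M\sim(1+|v|)^{-d-\alpha}$ into $(1+|v|)^{-(d+\delta)/2}\cdot(1+|v|)^{-(d+2\alpha-\delta)/2}$, leaving $|\chi_\eps-\phi|^2$ against a weight decaying like $|v|^{-d-2\alpha+\delta}$, which is much faster than $M$. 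That extra decay is what upgrades the exponent from $\alpha/2$ to (almost) $\alpha$, and then $\eps^{-\alpha}\cdot\eps^{\alpha/2}\cdot\eps^\eta\to 0$ for $\eta\in(\alpha/2,\alpha)$.

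\textbf{Second limit.} You overcomplicate this by asserting that $\nabla_v\chi_\eps$ carries a factor $\eps|v|$. It does not: differentiating $\phi(x+\eps vz,t)$ in $v_i$ produces $\eps z\,\partial_{x_i}\phi(x+\eps vz,t)$, with no $|v|$ factor; the $v$ sits only in the argument. After the substitution $s=\nu(v)z$ one finds
\[
\eps^{-1}\partial_{v_i}\chi_\eps=\int_0^\infty s\,e^{-s}\,\partial_{x_j}\phi\Big(x+\eps\frac{v}{\nu(v)}s,t\Big)\,\partial_{v_i}\!\Big(\frac{v_j}{\nu(v)}\Big)\,\d s,
\]
and $\partial_{v_i}(v_j/\nu)$ is \emph{bounded} by \eqref{nuprop2}. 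Hence $\|\eps^{-1}\nabla_v\chi_\eps\|_{L^2_M(\d v\,\d x)}\leq C\|\nabla_x\phi\|_{L^2}$ directly, and Cauchy--Schwarz gives a bound $C\|g_\eps\|_{L^2_{M^{-1}}}\|E\|_{L^\infty}=O(\eps^{\alpha/2})$. No heavy-tail obstruction arises here at all.
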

We will give a proof of this proposition which holds for any $\alpha \in (0,2) $ (and not just $\alpha>1$), since we will use the result for $\alpha=1$ in the next section.
\begin{proof}
To prove the first convergence, we note that 
\begin{align*} |K(g_\eps)(x,t)| & = \left| \int\sigma(v,v') g_\eps(x,v',t)\, dv' \right| M(v) \\
& \leq \nu_2 \left( \int\frac{g_\eps(x,v',t)^2}{M(v')}\, dv'\right)^{1/2} M(v).
\end{align*}
Therefore
\begin{align}
\left| \eps^{-\alpha} \int \int \int K(g_\eps) (\chi_\eps -\phi)  \, dx\, dv\,dt \right|
& \leq \eps^{-\alpha} \nu_2 \int \int \| g_\eps(x,\cdot,t)\|_{L^2_{M^{-1}}} \int_{ \R^d} M(v)  | \chi_\eps -\phi|  \, dv\, dx\,dt\nonumber \\
& \leq \eps^{-\alpha} \nu_2 \| g_\eps \|_{L^2_{M^{-1}}} \left( \int \int \left( \int_{ \R^d} M(v)  |\chi_\eps -\phi|  \, dv\right)^2\, dx\,dt\right)^{1/2}, \label{eq:Kes}
\end{align}
and we conclude thanks to the following result:
\begin{lemma}\label{lem:eta}
For all $\phi\in\mathcal D(\R^d)$ and all  $\eta <\alpha$, there exists a constant $C$ depending on $\eta$  such that
$$
 \left( \int \left( \int_{ \R^d} M(v)  |\chi_\eps -\phi|  \, dv\right)^2\, dx \right)^{1/2} \leq C  \| \phi ( \cdot, t) \|_{H^1(\R^d)}  \eps^\eta.
$$
\end{lemma}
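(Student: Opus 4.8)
The plan is to estimate the difference $\chi_\eps-\phi$ directly from its explicit representation \eqref{explicitaux}. Starting from
$$ \chi_\eps(x,v,t) - \phi(x,t) = \int_0^\infty \e^{-\nu(v)z}\nu(v)\,\big(\phi(x+\eps v z,t)-\phi(x,t)\big)\,\d z,$$
I would exploit the cancellation built into the kernel (namely $\int_0^\infty \e^{-\nu z}\nu\,\d z=1$) to write the integrand as a genuine increment of $\phi$. The goal is then to bound the $L^2_x$ norm of $\int_{\R^d} M(v)|\chi_\eps-\phi|\,\d v$ by $\eps^\eta\|\phi(\cdot,t)\|_{H^1}$ for every $\eta<\alpha$, so the key is to quantify how the increment $\phi(x+\eps v z,t)-\phi(x,t)$ behaves once integrated against the heavy-tailed weight $M(v)$.

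First I would reduce to a Fourier-side estimate. Taking the Fourier transform in $x$, the increment becomes $(\e^{i\eps v z\cdot k}-1)\hat\phi(k,t)$, so that
$$ \mathcal F\big(\chi_\eps-\phi\big)(k,v,t) = \hat\phi(k,t)\int_0^\infty \e^{-\nu(v)z}\nu(v)\big(\e^{i\eps v z\cdot k}-1\big)\,\d z.$$
By Plancherel, controlling $\|\int M(v)|\chi_\eps-\phi|\,\d v\|_{L^2_x}$ amounts to controlling $\big|\int_{\R^d} M(v)\,m_\eps(v,k)\,\d v\big|$ in $L^2_k$ against $\eps^\eta|k|^?\,|\hat\phi(k)|$, where $m_\eps(v,k)$ is the $z$-integral above. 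The central quantitative estimate is then a bound of the form
$$ \left|\int_{\R^d} M(v)\int_0^\infty \e^{-\nu(v)z}\nu(v)\big(\e^{i\eps v z\cdot k}-1\big)\,\d z\,\d v\right| \leq C\,(\eps|k|)^\eta,$$
uniformly in $k$, for each $\eta<\alpha$. Since $\|\phi\|_{H^1}$ controls $\||k|\,\hat\phi\|_{L^2}$, and $(\eps|k|)^\eta=\eps^\eta|k|^\eta$ with $\eta<\alpha\leq 1\cdot$(something $<2$) $\leq 1$... here one uses $\eta<\alpha<2$ so that $|k|^\eta$ can be dominated by $(1+|k|)$, which yields exactly the $\eps^\eta\|\phi\|_{H^1}$ bound after Plancherel.

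The main obstacle, and the heart of the lemma, is proving that quantitative $(\eps|k|)^\eta$ decay. This is precisely where the heavy-tail assumption \eqref{eqdecay}, $|v|^{d+\alpha}M(v)\to\gamma$, enters: the factor $\e^{i\eps vz\cdot k}-1$ is $O(\eps|v|z|k|)$ for small argument but only $O(1)$ for large argument, and the transition happens at $|v|z\sim (\eps|k|)^{-1}$. Splitting the $v$-integral (or the $z$-integral) at this scale and using that $M(v)\sim\gamma|v|^{-d-\alpha}$ makes $\int |v|M(v)\,\d v$ diverge logarithmically when $\alpha=1$ and polynomially when $\alpha<1$, one finds the borderline integrability that produces the exponent $\alpha$ and forces the strict inequality $\eta<\alpha$. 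I would carry out this split carefully, bounding the small-$v$ (or small-$z$) contribution by Taylor expansion of the exponential and the large-$v$ (or large-$z$) tail by the decay of $M$ together with $\e^{-\nu_1 z}$, and then optimize the cutoff to obtain the claimed power $\eta$. The symmetry $M(v)=M(-v)$ and $\nu(v)=\nu(-v)$ from \eqref{eqprop} and \eqref{nueven} should be used to kill the odd (first-order) part of the expansion, which is what upgrades a naive $O(\eps|k|)$-times-divergent-moment estimate into the correct fractional exponent.
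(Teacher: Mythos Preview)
There is a genuine gap in your Fourier reduction. The quantity in the lemma has an \emph{absolute value} inside the $v$-integral:
\[
\left\| \int_{\R^d} M(v)\,|\chi_\eps-\phi|\,\d v \right\|_{L^2_x},
\]
and Plancherel does not turn this into $\bigl\|\hat\phi(k)\int M(v)\,m_\eps(v,k)\,\d v\bigr\|_{L^2_k}$. What you wrote down is the Fourier transform of $\int M(v)(\chi_\eps-\phi)\,\d v$ \emph{without} the modulus; the map $g\mapsto|g|$ does not commute with $\mathcal F$. This is not a cosmetic point: the lemma is invoked in the paper precisely with the absolute value (see the step bounding $\int\int\int K(g_\eps)(\chi_\eps-\phi)$), so the signed version does not suffice. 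For the same reason, your appeal to the parity of $M$ and $\nu$ to ``kill the odd (first-order) part'' is misplaced: once $|\chi_\eps-\phi|$ sits inside the $v$-integral there is no cancellation to exploit, and indeed the paper's argument uses no symmetry at all.

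The fix is to bring the $L^2_x$ norm inside the $v$- and $z$-integrals (by Minkowski, or by Cauchy--Schwarz in $v$ as the paper does) \emph{before} doing any harmonic analysis. One then only needs the scalar translation estimate
\[
\|\phi(\cdot+h,t)-\phi(\cdot,t)\|_{L^2(\R^d)} \leq C\,|h|^\eta\,\|\phi(\cdot,t)\|_{H^1(\R^d)}, \qquad 0<\eta\leq 1,
\]
which is immediate either by Plancherel and $|\e^{ih\cdot k}-1|\leq C(|h||k|)^\eta$, or (as in the paper) by interpolating the trivial $L^2$ bound with the gradient bound. Applying this with $h=\eps v z$ and integrating against $M(v)\,\nu(v)\e^{-\nu(v)z}$ leaves the convergent integral $\int M(v)|v|^\eta\,\d v\cdot\int_0^\infty z^\eta \e^{-\nu_1 z}\,\d z$, finite exactly when $\eta<\alpha$. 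No scale-splitting in $v$ or $z$, and no parity argument, is needed.
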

Postponing the proof of this lemma to the end of this proof, we deduce (using \eqref{eq:boundg} and the fact that $\phi(x,t)=0$ is compactly supported in $t$):
\begin{align*}
\left| \eps^{-\alpha} \int \int \int K(g_\eps) (\chi_\eps -\phi)  \, dx\, dv\,dt \right|
& \leq C \eps^{\eta-\alpha} 
\| g_\eps \|_{ L^2_{M^{-1} }( \R^{ 2d} \times (0,T))}  
\|  \phi\|_{ L^2(0,\infty;H^1( \R^d))}\\
& \leq C\|  \phi\|_{  L^2(0,\infty;H^1( \R^d))} \eps^{\eta-\alpha/2},
\end{align*}
and the result follows by choosing any $\eta\in(\alpha/2,\alpha)$.

\medskip

To prove the second limit, we first rewrite 
\eqref{explicitaux} as
$$
\chi_\varepsilon ( x, v, t) = \int_0^\infty \e^{-s}  \phi \left( x + \varepsilon \frac{v}{\nu(v)} z, t\right) \d s 
$$
and observe that
\begin{align}
\frac{ 1}{ \eps}  \partial_{v_i} \chi_\eps   & =  \int_0^\infty s \e^{ -s} \partial_{x_j}\phi \left( x + \varepsilon \frac{v}{\nu(v)} z, t\right) 
\partial_{v_i}\left(\frac{v_j}{\nu(v)}\right)
\d z. \label{eq:chidv} 
\end{align}
Next let us note that thanks to \eqref{nuprop2} we obtain
 $$ \left| \partial_{v_i}\left(\frac{v_j}{\nu(v)}\right)\right| \leq \frac{1}{\nu(v)} + \frac{|v|\, |\nabla_v \nu(v)|}{\nu(v)^2} \leq C,
 $$
 for all $1 \leq i, j \leq d$. Using Jensen's inequality, we deduce:
\begin{align*}
\int\int \left|\frac{ 1}{ \eps}  \dv \chi_\eps\right| ^2M(v)\, \d v \d x 
& \leq C \int\int   \int_0^\infty s \e^{ -s} \left| \dx \phi\left( x + \varepsilon \frac{v}{\nu(v)} z, t\right)  \right|^2 \d z \,
M(v)\, \d v \d x \\
& \leq C \| \nabla_x \phi ( \cdot, t)\| _{L^2(\R^d)}. 
\end{align*}
Therefore, by Cauchy-Schwarz
  \begin{align*}
\left| \eps^{-1} \int  \int\int g_\eps ( E \cdot \dv \chi_\eps) \d v\d x\d t \right|
& \leq  \| g_\eps\|_{L^2_{M^{-1}}} \| E\|_{L^\infty} \| \nabla_x \phi \| _{L^2(\R^d \times ( 0, \infty))}, 
\end{align*}
 which completes the proof thanks to \eqref{eq:boundg}.
\end{proof}

\begin{proof}[Proof of Lemma \ref{lem:eta}]
For any $\delta>0$ we can write:
\begin{align*}
 \left( \int_{ \R^d} M(v)  |\chi_\eps -\phi|  \, \d v\right)^2
 & \leq  C \left( \int_{ \R^d} \frac{1}{(1+|v|)^{ d+\alpha}}  |\chi_\eps -\phi|  \, \d v\right)^2\\
 & \leq  C \left( \int_{ \R^d} \frac{1}{(1+|v|)^{ d+\delta}}\, \d v \right)\left( \int_{ \R^d} \frac{1}{(1+|v|)^{ d+2\alpha-\delta}}  |\chi_\eps -\phi|^2  \, \d v\right)\\ 
 & \leq  C_\delta \int_{ \R^d} \frac{1}{(1+|v|)^{ d+2\alpha-\delta}}  |\chi_\eps -\phi|^2  \, \d v.
\end{align*}
Furthermore, we have
\begin{align*}
|\chi_\eps -\phi| & = 
\left| \int_0^\infty \e^{-\nu(v)z}\nu(v) [\phi ( x + \varepsilon v z) - \phi( x) ] \d z \right|\\
& \leq 
\left( \int_0^\infty \e^{-\nu(v)z}\nu(v) [\phi ( x + \varepsilon v z) - \phi( x) ]^2 \d z\right)^{1/2}
\end{align*}
and so
\begin{align*}
\int_{\R^d}
 |\chi_\eps -\phi|^2
\, \d x 
& \leq   \int_0^\infty \e^{-\nu(v)z}\nu(v) \int_{\R^d} [\phi ( x + \varepsilon v z) - \phi( x ) ]^2\, \d x  \d z. 
\end{align*}
Finally, using the inequalities
$$ \int_{\R^d} [\phi ( x + \varepsilon v z) - \phi( x) ]^2\, \d x \leq  2 \| \phi ( \cdot, t)\|^2_{L^2(\R^d)} 
$$
and 
$$ \int_{\R^d} [\phi ( x + \varepsilon v z) - \phi( x) ]^2\, \d x \leq  \| \nabla_x \phi ( \cdot, t)\|^2_{ L^2(\R^d)} |\eps v  z|^2$$
we note that for any $\eta\in(0,1)$ there exists a constant $C$ such that
$$ \int_{\R^d} [\phi ( x + \varepsilon v z) - \phi( x) ]^2\, \d x \leq C \| \phi ( \cdot, t)\|^2_{ H^1(\R^d)} (\eps |v| z)^{2\eta}.$$
We deduce
$$
\int \left( \int_{ \R^d} M(v)  |\chi_\eps -\phi| \, \d v\right)^2 \d x 
 \leq C  \eps ^{2\eta}  \| \phi ( \cdot, t) \|_{H^1(\R^d )} ^2 
 \int_{ \R^d}  \int_0^\infty \e^{-\nu(v)z}\nu(v) \frac{( |v| z)^{2\eta} }{(1+|v|)^{ d+2\alpha-\delta}}    \, \d z \d v
$$
where the last integral is finite provided we choose $\eta<\alpha$ and then $\delta < 2(\alpha-\eta)$.
\end{proof}

Having proved that the two terms in the right hand side of \eqref{weakform} go to zero as $\eps\to 0$, we now prove the following result, 
which  shows how the asymptotic equation appears when passing to the limit in \eqref{weakform}:
\begin{proposition}\label{prop:4}
Let $\mathcal L^\eps$ be the operator defined by
$$ \mathcal L^\eps(\phi)(x,t) :=   \eps^{-\alpha} \int \nu F_\eps (\chi_\eps-\phi) \d v $$
for all $ \phi \in \mathcal D(\R^d \times (0,\infty))$,
where $\chi_\eps$ is defined by \eqref{explicitaux}. Then 
$$ \mathcal L^\eps(\phi) \longrightarrow \mathcal L(\phi):= - \kappa ( - \Delta)^{ \alpha / 2} ( \phi) 
- (D E) \cdot \nabla_x \phi \quad \mbox{ as } \eps\to0$$
uniformly and in $L^2$. The matrix $D$ is defined by \eqref{eq:D} and $\kappa$ is given by \eqref{kapval}.
\end{proposition}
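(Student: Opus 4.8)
The starting point is to substitute the explicit formula \eqref{explicitaux} for $\chi_\eps$ into the definition of $\mathcal L^\eps$. Writing $\chi_\eps-\phi=\int_0^\infty\e^{-\nu(v)z}\nu(v)\,[\phi(x+\eps v z,t)-\phi(x,t)]\,\d z$ and using $\int_0^\infty\nu\,\e^{-\nu z}\,\d z=1$, one obtains
$$\mathcal L^\eps(\phi)=\eps^{-\alpha}\int_{\R^d}\int_0^\infty \nu(v)^2\,F_\eps\,\e^{-\nu(v)z}\,[\phi(x+\eps v z)-\phi(x)]\,\d z\,\d v.$$
The first step of the plan is to insert the asymptotic expansion of Proposition \ref{prop:Fas}. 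Since $\alpha>1$, the argument $\eps^{\alpha-1}E$ of $F_\eps=F(v,\eps^{\alpha-1}E)$ tends to $0$, so \eqref{eq:FGE} gives $F_\eps=M+\eps^{\alpha-1}E\cdot\lambda+G_\eps$ with $|G_\eps|\le C\eps^{2(\alpha-1)}M$ by \eqref{eq:GLinfty2}. This decomposes $\mathcal L^\eps=\mathcal L^\eps_M+\mathcal L^\eps_\lambda+\mathcal L^\eps_G$, and I would analyse the three pieces in turn.

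The piece $\mathcal L^\eps_M$ is the one that generates the fractional Laplacian, and it is the heart of the argument. For each fixed $z$ I would use the change of variables $y=x+\eps z v$, which turns the velocity integral into $\int_{\R^d}K_\eps(x,y,z)\,[\phi(y)-\phi(x)]\,\d y$ with $K_\eps=(\eps z)^{-d}\eps^{-\alpha}\,\nu\big(\tfrac{y-x}{\eps z}\big)^2M\big(\tfrac{y-x}{\eps z}\big)\,\e^{-\nu(\cdots)z}$. For $y\neq x$ the argument $\tfrac{y-x}{\eps z}$ blows up as $\eps\to0$, so \eqref{eqdecay} and \eqref{eq:nuasym} yield $M\big(\tfrac{y-x}{\eps z}\big)\sim\gamma(\eps z)^{d+\alpha}|y-x|^{-(d+\alpha)}$ and $\nu(\cdots)\to\nu_0$; integrating in $z$, the kernel converges to $\gamma\nu_0^2\big(\int_0^\infty z^\alpha\e^{-\nu_0 z}\d z\big)|y-x|^{-(d+\alpha)}=\kappa\,c_{d,\alpha}\,|y-x|^{-(d+\alpha)}$, which by \eqref{eq:fracla}--\eqref{kapval} is exactly the kernel of $-\kappa(-\Delta)^{\alpha/2}$. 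To make this rigorous I would pass to the limit by dominated convergence, building an integrable majorant from \eqref{comparison}, and --- since $1<\alpha<2$ --- control the near-diagonal region through the second-order estimate $|\phi(y)-\phi(x)-\nabla_x\phi(x)\cdot(y-x)|\le C|y-x|^2$, the linear term dropping out by the evenness of $M$ and $\nu$.

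For $\mathcal L^\eps_\lambda$ the prefactor $\eps^{-\alpha}\cdot\eps^{\alpha-1}=\eps^{-1}$ combines with the first-order Taylor term $\phi(x+\eps vz)-\phi(x)\simeq\eps z\,v\cdot\nabla_x\phi$ to produce the $O(1)$ limit $\int_{\R^d}\big(\int_0^\infty\nu^2 z\,\e^{-\nu z}\d z\big)(E\cdot\lambda)(v\cdot\nabla_x\phi)\,\d v=\int_{\R^d}(E\cdot\lambda)(v\cdot\nabla_x\phi)\,\d v$, using $\int_0^\infty\nu^2 z\,\e^{-\nu z}\d z=1$; an integration by parts rewrites this as a drift term $(DE)\cdot\nabla_x\phi$ with $D$ given by \eqref{eq:D}. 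What makes this term finite (and the Taylor expansion legitimate) is precisely the condition $\alpha>1$, which guarantees $\int|\lambda(v)|\,|v|\,\d v<\infty$ since $|\lambda|\le CM$ and $M\sim|v|^{-(d+\alpha)}$; the quadratic Taylor remainder and the large-$|v|$ tail are estimated as in $\mathcal L^\eps_M$ and vanish. The remaining piece $\mathcal L^\eps_G$ is negligible: the bound $|G_\eps|\le C\eps^{2(\alpha-1)}M$ reduces it to $\eps^{2(\alpha-1)}$ times an $M$-type integral that is uniformly $O(1)$, so $\mathcal L^\eps_G=O(\eps^{2\alpha-2})\to0$.

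Finally, all the estimates above are uniform in $(x,t)$, the constants depending only on the support and the $W^{2,\infty}$ norm of $\phi$ and, through $F_\eps$ and Propositions \ref{prop:FMbd}--\ref{prop:Fas}, on $\|E\|_{W^{1,\infty}}$; this gives the uniform convergence, and the compact support of $\phi$ together with the decay of the limiting kernels upgrades it to convergence in $L^2$. The step I expect to be the main obstacle is the rigorous passage to the limit in $\mathcal L^\eps_M$: isolating the genuinely singular near-diagonal contribution (which relies on the $\alpha>1$ cancellation of the odd term) from the tail, and justifying uniformly in $x$ the replacement of $M$ and $\nu$ by their asymptotic profiles $\gamma|v|^{-(d+\alpha)}$ and $\nu_0$.
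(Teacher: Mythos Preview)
Your approach is essentially the same as the paper's: decompose $F_\eps=M+\eps^{\alpha-1}E\cdot\lambda+G_\eps$ via Proposition~\ref{prop:Fas}, treat the $M$-piece by the argument of \cite{MR2815035}, the $\lambda$-piece by a first-order Taylor expansion, and show the $G$-piece is negligible.

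One point needs correction. In your treatment of $\mathcal L^\eps_G$ you assert that the bound $|G_\eps|\le C\eps^{2(\alpha-1)}M$ ``reduces it to $\eps^{2(\alpha-1)}$ times an $M$-type integral that is uniformly $O(1)$''. This is not right: the quantity $\eps^{-\alpha}\int\nu M\,|\chi_\eps-\phi|\,\d v$ (with absolute values, as required once you lose the sign of $G_\eps$) is \emph{not} $O(1)$; a direct estimate via $|\chi_\eps-\phi|\le C\min(1,\eps|v|)$ gives only $O(\eps^{1-\alpha})$, which blows up. The cancellation that makes $\mathcal L^\eps_M$ bounded is destroyed by the absolute value. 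The fix is exactly what the paper does: write $\chi_\eps-\phi=\eps\int_0^\infty\int_0^z\e^{-\nu z}\nu\,v\cdot\nabla_x\phi(x+\eps vs)\,\d s\,\d z$, insert $|G_\eps|\le C\eps^{2(\alpha-1)}M$, and use $\int|v|M\,\d v<\infty$ (here $\alpha>1$ is essential). This yields $\|\mathcal L^\eps_G\|_{L^\infty}\le C\eps^{\alpha-1}$, not $\eps^{2(\alpha-1)}$; the conclusion $\mathcal L^\eps_G\to0$ survives, but your stated rate and the reasoning behind it do not. Your remark about upgrading to $L^2$ via compact support also needs care, since neither $\mathcal L^\eps(\phi)$ nor $(-\Delta)^{\alpha/2}\phi$ is compactly supported; the paper handles the $L^2$ convergence of each piece separately (Jensen for $L^\eps_3$, interpolation for $L^\eps_2$).
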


The key to the proof of this proposition is the following immediate consequence
of Proposition \ref{prop:Fas}:
\begin{proposition}\label{lem:Feps}
When $\alpha>1$, the function $F_\eps$ satisfies
\begin{equation} \label{eq:FG}
F_\eps (x,v,t) = M(v) + \eps^{\alpha-1} E(x,t)\cdot \lambda(v) + G_\eps(x,v,t)
\end{equation}
where $\lambda(v)$ is given by \eqref{eq:lambda}
and  $G_\eps$ satisfies:
\begin{equation}\label{eq:GLinfty} 
| G_\eps(x,v,t)| \leq  C \eps^{2(\alpha-1)} | E(x,t)|^2 M(v) \quad \mbox{ for all $(x,v,t)$}.
\end{equation}
\end{proposition}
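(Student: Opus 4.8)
The plan is to read off the result directly from Proposition \ref{prop:Fas} by substitution, since $F_\eps$ is nothing but the function $v\mapsto F(v,E)$ evaluated at the rescaled field $\eps^{\alpha-1}E(x,t)$. First I would recall from the beginning of Section 3 that
$$F_\eps(x,v,t) = F\big(v,\eps^{\alpha-1}E(x,t)\big),$$
so that plugging the argument $E\mapsto \eps^{\alpha-1}E(x,t)$ into the expansion \eqref{eq:FGE} immediately gives
$$F_\eps(x,v,t) = M(v) + \eps^{\alpha-1}E(x,t)\cdot\lambda(v) + G\big(v,\eps^{\alpha-1}E(x,t)\big).$$
This is exactly \eqref{eq:FG}, provided I set $G_\eps(x,v,t) := G\big(v,\eps^{\alpha-1}E(x,t)\big)$; note that the function $\lambda$ arising here is the very same $\lambda$ of \eqref{eq:lambda}, so no identification is needed.

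The only remaining task is to transfer the pointwise bound \eqref{eq:GLinfty2} to $G_\eps$, and the single point requiring care is the smallness hypothesis under which \eqref{eq:GLinfty2} is valid, namely $|E|\leq 1$. I must therefore verify that the effective field $\eps^{\alpha-1}E(x,t)$ meets this constraint. Since $\alpha>1$ we have $\eps^{\alpha-1}\leq 1$ whenever $\eps\leq 1$, and by assumption \eqref{eq:E} the field is uniformly bounded, $|E(x,t)|\leq \|E\|_{L^\infty}$. Hence for $\eps$ small enough that $\eps^{\alpha-1}\|E\|_{L^\infty}\leq 1$ (a threshold depending only on $\|E\|_{L^\infty}$, which is harmless in the limit $\eps\to 0$), one has $|\eps^{\alpha-1}E(x,t)|\leq 1$ for every $(x,t)$, and \eqref{eq:GLinfty2} applies with $E$ replaced by $\eps^{\alpha-1}E(x,t)$.

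It then suffices to read off the estimate: \eqref{eq:GLinfty2} yields
$$|G_\eps(x,v,t)| = \big|G\big(v,\eps^{\alpha-1}E(x,t)\big)\big| \leq C\,\big|\eps^{\alpha-1}E(x,t)\big|^2 M(v) = C\,\eps^{2(\alpha-1)}|E(x,t)|^2 M(v),$$
which is precisely \eqref{eq:GLinfty}. I do not anticipate any genuine obstacle here, as the statement is a direct specialization of Proposition \ref{prop:Fas}; the only bookkeeping to watch is the smallness of the rescaled field, which is guaranteed precisely because the condition $\alpha>1$ forces $\eps^{\alpha-1}\to 0$ as $\eps\to 0$.
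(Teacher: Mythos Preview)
Your proposal is correct and matches the paper's approach: the paper itself presents this proposition as an ``immediate consequence of Proposition~\ref{prop:Fas}'' without further proof, and your substitution $E\mapsto \eps^{\alpha-1}E(x,t)$ together with the check that $|\eps^{\alpha-1}E(x,t)|\leq 1$ for small $\eps$ is exactly the intended argument.
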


\begin{proof}[Proof of Proposition \ref{prop:4}]
Using Proposition \ref{lem:Feps} above, we write
\begin{equation}\label{eq:L}
 \mathcal L^\eps(\phi) = L_1^\eps + L_2^\eps + L_3^\eps
 \end{equation}
where
\begin{align*}
 L^\eps_1 & =   \eps^{-\alpha} \int \nu M(v) (\chi_\eps-\phi) \d v\\
 L^\eps_2 & =   \eps^{-1} \int \nu  E(x,t)\cdot \lambda(v)(\chi_\eps-\phi) \d v\\
 L^\eps_3 & = \eps^{-\alpha} \int \nu G_\eps (\chi_\eps-\phi) \d v.
\end{align*}
The first term converges to  $- \kappa ( - \Delta)^{ \alpha / 2} ( \phi) $ uniformly and in $L^2$, as was proved, for instance in \cite{MR2815035}.

For the second term, we note that
$$
L^\eps_2  =   E(x,t)\cdot \left( \eps^{-1} \int \nu   \lambda(v)(\chi_\eps-\phi) \d v\right)
$$
and we conclude thanks to the following lemma (which is proved below):
\begin{lemma}\label{lem:lambda}
For any test function $\phi$, we have
$$
\lim_{\eps\to 0} \eps^{-1} \int_{\R^d} \nu   \lambda (v)(\chi_\eps-\phi) \d \, v=\int_{\R^d}  \lambda(v)  (v \cdot \nabla_{x} \phi ( x ,t)) \d \, v = D^T \nabla_x \phi
$$
where the limit holds uniformly and in $L^2$.
\end{lemma}

Finally, for the last term in \eqref{eq:L}, 
we write
\begin{align}
\chi_\eps -\phi 
 &= 
 \int_0^\infty \e^{-\nu(v)z}\nu(v) [\phi ( x + \varepsilon v z, t) - \phi( x, t) ] \d z \nonumber \\
 &  = \int_0^\infty \e^{-\nu(v)z}\nu(v) \int_0^z \eps v \cdot \nabla_x \phi ( x + \varepsilon v s, t)\d s \d z, \label{eq:chivphi}
\end{align}
which gives:
\begin{align*}
L^\eps_3 & = \eps^{-\alpha} \int_{\R^d} \nu G_\eps (\chi_\eps-\phi) \d v\\
& = \eps^{1-\alpha}  \int_{\R^d} \int_0^\infty  \int_0^z \e^{-\nu(v)z}\nu(v)^2   G_\eps   v \cdot \nabla_x \phi ( x + \varepsilon v s, t)\d s \d z \d v.
\end{align*}
Using  \eqref{eq:GLinfty}, we deduce
\begin{align*}
|L^\eps_3| & 
\leq  C    \eps^{\alpha-1} | E(x,t)|^2   \int_{\R^d} \int_0^\infty  \int_0^z \e^{-\nu(v)z}\nu(v)^2  M(v)    v \cdot \nabla_x \phi ( x + \varepsilon v s, t)\d s \d z \d v.
\end{align*}
Next, thanks to the fact that $\int_{\R^d} |v| M(v)\d v$ is finite (since $\alpha>1$) we obtain
\begin{align*}
\|L^\eps_3\|_{L^\infty(\R^d\times(0,\infty))} & 
\leq  C    \eps^{\alpha-1} | E(x,t)|^2\| \nabla_x \phi \|_{L^\infty},
\end{align*}
and applying Jensen's inequality we get
\begin{align*}
|L^\eps_3|^2 & 
\leq  C  (  \eps^{\alpha-1} | E(x,t)|^2  )^2 \int_{\R^d} \int_0^\infty  \int_0^z \e^{-\nu(v)z}\nu(v)^2  M(v)    |v| |\nabla_x \phi ( x + \varepsilon v s, t)|^2\d s \d z \d v,
\end{align*}
hence\begin{align*}
\|L^\eps_3\|_{L^2(\R^d\times(0,T))} & 
\leq  C    \eps^{\alpha-1} \| E(x,t)\|_{L^\infty}^2 \| \nabla_x \phi \|_{L^2(\R^d\times(0,T))}, 
\end{align*}
which completes the proof.
\end{proof}

\begin{proof}[Proof of Lemma \ref{lem:lambda}]
First, using \eqref{eq:chivphi} we obtain
\begin{equation}\label{eq:lambda11}
\eps^{-1} \int_{\R^d} \nu   \lambda_i(v)(\chi_\eps-\phi) \d v
=
\int_{\R^d} 
\int_0^\infty  \int_0^z \e^{-\nu(v)z}\nu(v)^2   \lambda_i(v)  v \cdot \nabla_x \phi ( x + \varepsilon v s, t)\d s \d z
\d v.
\end{equation}
Next, we note that for any $\delta\in(0,1)$, we have
$$ 
| \nabla_x \phi ( x + \varepsilon v s, t)-\nabla_x\phi(x,t)|\leq C |\eps v s|^\delta,
$$
and
$$
\int_0^\infty  \int_0^z \e^{-\nu(v)z}\nu(v)^2   \d s \d z=1
$$
where the first inequality follows from the two inequalities $|\nabla_x \phi(x+y)-\nabla_x\phi(x)|\leq C$ (for $|y|\geq 1$)
 and $|\nabla_x \phi(x+y)-\nabla_x\phi(x)|\leq C|y|$ (for $|y|\leq 1$). Hence, thanks
to \eqref{eq:lambdaprop} we deduce
\begin{align*}
& \left | \eps^{-1} \int_{\R^d} \nu   \lambda_i(v)(\chi_\eps-\phi) \d v - \int_{\R^d}  \lambda_i (v)  v \d v\cdot \nabla_{x} \phi ( x ,t)\right| \\
& \hspace{3cm} \leq 
C \int_{\R^d} 
\int_0^\infty  \int_0^z \e^{-\nu(v)z}\nu(v)^2   \lambda(v) | v| |\eps vs|^\delta \d s \d z
\d v\\
& \hspace{3cm} \leq 
C \eps^\delta \int_{\R^d} 
  M(v) | v|^{1+\delta} 
\d v.
\end{align*}
The uniform convergence follows by choosing $\delta $ such that $0<\delta<\alpha-1$.

\medskip

Finally, going back to \eqref{eq:lambda11}, we also deduce
\begin{align*}
& \int_0^\infty \int_{\R^d} \left| \eps^{-1} \int_{\R^d} \nu   \lambda(v)(\chi_\eps-\phi) \d v \right | \, dx\, dt\\
&\qquad  \leq 
\int_0^\infty \int_{\R^d} 
\int_{\R^d} 
\int_0^\infty  \int_0^z \e^{-\nu(v)z}\nu(v)^2   \lambda(v)  |v| | \nabla_x \phi ( x + \varepsilon v s, t)| \d s \d z
\d v \, dx\, dt\\
& \qquad 
 \leq \|  \nabla_x \phi\|_{ L^1 \left( \R^d\times( 0, T) \right)}
\int_{\R^d} 
\int_0^\infty  \int_0^z \e^{-\nu(v)z}\nu(v)^2   \lambda(v)  |v|  \d s \d z
\d v \\
&\qquad   \leq C \|  \nabla_x \phi\|_{ L^1 \left( \R^d\times( 0, T) \right)}.
\end{align*}
So by a simple interpolation, we see that since the quantity under consideration is bounded in $L^1$ and converges uniformly, it also converges in $L^2$.
\end{proof}

Gathering the results above, we can now complete the proof of Theorem \ref{thm:main}:

\begin{proof}[Proof of Theorem \ref{thm:main}]
In view of Proposition \ref{apriorieq1} and using a diagonal extraction argument, we can assume (up to a subsequence) that 
there exist two functions $f(x,v,t)$ and $\rho(x,t)$ such that
$$ f_\eps \rightharpoonup f \quad  \mbox{ in $L^\infty ( (0,T);L^2_{M^{-1}}(\R^{2d}) )$-weak $\star$}$$
 and 
$$\rho_\eps \rightharpoonup \rho\quad \mbox{ in $L^\infty((0,T);L^2(\R^{d}))$-weak $\star$} $$
for all $T>0$.
Furthermore, Proposition \ref{apriorieq1} (iii), together with Proposition \ref{lem:Feps} implies
$$ \| f_\eps - \rho_\eps M\|_{L^2(0,T;L^2_{ M^{ -1}} (\R^{2d}))} \leq C(T)\eps^{\alpha-1}$$
and so
$$ f(x,v,t)=\rho(x,t) M(v).$$ 

Next, we recall that Lemma \ref{lem:eta} gives:
$$  \int M [ \chi_\eps-\phi ] \d v \longrightarrow 0 \mbox{ in $L^2(\R^d\times(0,\infty))$},$$ 
and we can prove similarly that
$$  \int M [\partial _t \chi_\eps-\partial_t \phi ] \d v \longrightarrow 0 \mbox{ in $L^2(\R^d\times(0,\infty))$}.$$ 
Using these facts, it is easy to show that
\begin{align*}
& \lim_{\eps\to 0} \left( \int_0^\infty \int\int f_\eps \dt \chi_\eps \d v\d x\d t + \int\int f^{ in} \chi_\eps |_{ t=0} \d v\d x \right) \\
& \qquad\qquad \qquad\qquad =
 \int_0^\infty \int \rho \dt \phi  \d x\d t + \int\int \rho^{ in} \phi |_{ t=0} \d x .
\end{align*}

Finally combining this limit with Propositions \ref{prop:2} and \ref{prop:4}, we can now pass to the limit in \eqref{weakform} to deduce:
$$
 \int_0^\infty \int \rho \dt \phi  \d x\d t + \int\int \rho^{ in} \phi |_{ t=0} \d x 
 + 
 \int_0^\infty \int \rho \left[ - \kappa ( - \Delta)^{ \alpha / 2} ( \phi) 
- (DE) \cdot \nabla_x \phi \right]\d x \d t=0
$$
which is the weak formulation of \eqref{drifdifeq}.
\end{proof}

\section{Proof of Theorem \ref{thm:2}}
Before proving Theorem \ref{thm:2}, we need to show that $\mu(E)$ defined by \eqref{eq:mu} is well defined:
\begin{lemma}\label{lem:mu}
The function $R(v,E) = F(v,E) - M(v)$ satisfies
 \[
  | R ( v, E)| \leq C|E| \frac{ M( v)}{ 1 + | v|},
 \] 
For some constant $C > 0$.
In particular, the quantity $\mu(E)$ defined by \eqref{eq:mu} is well defined for all $E\in\R^d$
and satisfies $|\mu(E)|\leq C|E|$.
\end{lemma}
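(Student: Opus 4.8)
The plan is to derive an equation for $R=F-M$ and then follow the two-step strategy used in the proof of Theorem~\ref{thm:FE}-(iii): an $L^2_{F^{-1}}$ bound obtained from coercivity, followed by a pointwise bound obtained from Lemma~\ref{definqbo}. Subtracting $Q(M)=0$ from \eqref{eq:kernelF}, and noting that $\int_{\R^d} R\,\d v = \int F\,\d v - \int M\,\d v = 0$, one finds that $R$ solves
$$ \T(R) = -E\cdot\nabla_v M, \qquad \int_{\R^d} R\,\d v =0,$$
where $\T$ is the operator defined in \eqref{eq:TE}. As elsewhere in the paper, the constants below may depend on an upper bound $R$ for $|E|$.

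First I would establish the $L^2_{F^{-1}}$ estimate $\|R\|_{L^2_{F^{-1}}}\leq C|E|$. Multiplying the equation above by $R/F$ and integrating, the coercivity inequality \eqref{coerineq} together with $\rho_R=0$ gives
$$ \theta\,\|R\|^2_{L^2_{F^{-1}}} \leq -\int E\cdot\nabla_v M\,\frac{R}{F}\,\d v.$$
Using $|\nabla_v M|\leq C M/(1+|v|)\leq CM$ from \eqref{derdecay2}, Cauchy--Schwarz, and the lower bound $F\geq cM$ from \eqref{eq:FEbound} (so that $\int M^2/F\,\d v \leq c^{-1}$), the right-hand side is bounded by $C|E|\,\|R\|_{L^2_{F^{-1}}}$, which yields the claimed estimate. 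Since $\int F\,\d v =1$, Cauchy--Schwarz then gives the $L^1$ bound $\int_{\R^d}|R|\,\d v \leq \|R\|_{L^2_{F^{-1}}}\leq C|E|$.

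The key step is the pointwise bound. I would rewrite the equation for $R$ as
$$ \nu R + E\cdot\nabla_v R = \K(R) - E\cdot\nabla_v M,$$
and bound the right-hand side by $C|E|\,M(v)/(1+|v|)$. The term $E\cdot\nabla_v M$ is handled directly by \eqref{derdecay2}. For $\K(R)=\int\sigma(v,v')R(v')\,\d v'\,M(v)$, the crucial observation --- exactly as in the proof of Theorem~\ref{thm:FE}-(iii) --- is that $\int R\,\d v'=0$ allows one to replace $\sigma$ by $\sigma-\nu_0$, so that
$$ \K(R) = \int [\sigma(v,v')-\nu_0]\,R(v')\,\d v'\,M(v).$$
Assumption \eqref{eq:silim} then provides the extra decay $|\sigma-\nu_0|\leq C/(1+|v|)$, and combined with the $L^1$ bound this gives $|\K(R)|\leq C|E|\,M(v)/(1+|v|)$. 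Hence $|\nu R + E\cdot\nabla_v R|\leq C|E|\,M(v)/(1+|v|)$, and applying Lemma~\ref{definqbo} with Remark~\ref{frem} (to both $R$ and $-R$, with $\beta=C|E|$) yields the desired bound $|R(v,E)|\leq C|E|\,M(v)/(1+|v|)$.

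Finally, to conclude that $\mu(E)$ is well defined, I would note that $|v\,R(v,E)|\leq C|E|\,|v|M(v)/(1+|v|)\leq C|E|\,M(v)$, which is integrable even when $\alpha=1$ (the case in which $vM$ itself fails to be integrable). Thus $\mu(E)=\int_{\R^d} v\,R(v,E)\,\d v$ converges absolutely and $|\mu(E)|\leq C|E|\int M\,\d v = C|E|$. I expect the main obstacle to be the centering trick for $\K(R)$: without using $\int R\,\d v=0$ to subtract $\nu_0$, the term $\K(R)$ only decays like $M(v)$ rather than $M(v)/(1+|v|)$, which would be insufficient to make $vR$ integrable in the critical case $\alpha=1$, precisely the situation for which this lemma is needed.
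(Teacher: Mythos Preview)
Your proposal is correct and follows essentially the same route as the paper's own proof: derive $\T(R)=-E\cdot\nabla_v M$, use coercivity \eqref{coerineq} together with $\int R\,\d v=0$ to get $\|R\|_{L^2_{F^{-1}}}\leq C|E|$, then rewrite as $\nu R + E\cdot\nabla_v R = \K(R)-E\cdot\nabla_v M$, exploit the centering trick $\K(R)=\int[\sigma-\nu_0]R\,\d v'\,M$ with \eqref{eq:silim} to gain the extra $(1+|v|)^{-1}$, and conclude via Lemma~\ref{definqbo}/Remark~\ref{frem}. Your explicit remark that the constants depend on an upper bound for $|E|$ is a useful clarification.
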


Postponing the proof of this lemma to the end of this section, we turn to the proof of Theorem~\ref{thm:2}:

\begin{proof}[Proof of Theorem \ref{thm:2}]
When $\alpha = 1$, $F_\eps(x,v,t)=F(v,E(x,t)$ is independent of $\eps$ (we thus drop the $\eps$ subscript below)
and
 the weak formulation \eqref{weakform} takes the form
\begin{align}
 &  \int\int\int f_\eps \dt \chi_\eps \d v\d x\d t + \int\int f^{ in} \chi_\eps |_{ t=0} \d v\d x 
 + \frac{ 1}{ \eps} \int \int\int \rho_\eps\nu  F (\chi_\eps - \phi)\d v\d x\d t  \nonumber \\ 
 & \qquad = - \frac{ 1}{ \eps} \int  \int\int g_\eps ( E \cdot \dv \chi_\eps) \d v\d x\d t 
- \frac{ 1}{ \eps} \int  \int\int K(g_\eps) (\chi_\eps -\phi)  \d v\d x\d t. \label{weakform3}
\end{align}
Proceeding as in the proof of Theorem \ref{thm:main}, we have (see Proposition \ref{apriorieq1}): 
$$ f_\eps \rightharpoonup f \quad  \mbox{ in $L^\infty ( (0,T);L^2_{M^{-1}}(\R^{2d}) )$-weak $\star$}$$
 and 
$$\rho_\eps \rightharpoonup \rho\quad \mbox{ in $L^\infty((0,T);L^2(\R^{d}))$-weak $\star$} $$
for all $T>0$ and we can write
$$f_\eps = \rho_\eps F  + g_\eps$$ 
where
 $g_\eps$ satisfies 
\begin{equation}\label{eq:gbd}
\| g_\eps\|_{ L^2 ( ( 0, T), L^2_{ M^{ -1}} ( \R^{ 2d}))} \leq C(T) \eps^{1/2}.
\end{equation}

This implies in particular that 
$$ f(x,v,t)=\rho(x,t) F(x,v,t).$$

In order to complete the proof of Theorem \ref{thm:2}, we need to pass to the limit in the weak formulation \eqref{weakform}.
First, we note that thanks to Proposition \ref{prop:2} (which we proved without restriction on $\alpha$), the right hand side in \eqref{weakform3} vanishes in the limit. Now let us define the 
operator $\L^\eps ( \varphi)$ as 

\begin{align}
  \L^\eps ( \varphi) &= \frac{ 1}{ \eps} \int_{\R^d} \nu ( v) F ( v, E) ( \chi_\eps - \varphi) \d v \nonumber \\
                     &= \frac{ 1}{ \eps} \int_{\R^d} \nu ( v) F ( v, 0) ( \chi_\eps - \varphi) \d v \nonumber \\
                     &\quad + \frac{ 1}{ \eps} \int_{\R^d} \nu ( v) \big( F( v, E) - F ( v, 0) \big) ( \chi_\eps - \varphi) \d v \nonumber \\
                     &= \L^\eps_1 ( \varphi) + \L^\eps_2 ( \varphi),
\end{align}
where $F ( v, 0) = M( v)$ thanks to the definition of $F$ given in \eqref{eq:kernelF}. 

Proposition 
4.4 in \cite{MR2815035} gives
\[
 \L^\eps_1 ( \varphi) \to \kappa ( -\Delta)^{ 1 / 2} \varphi \mbox{ in $L^2$-strong} .
\]
Furthermore, 
using formula \eqref{explicitaux} for $\chi_\eps$, 
we can recast $\L^\eps_2 ( \varphi)$ as follows:
\begin{align}
 \L^\eps_2 ( \varphi) & =\frac{ 1}{ \eps} \int_{\R^d} \int_0^\infty \e^{ - \nu ( v) z} \nu^2 ( v) \big( F( v, E) - M \big) ( \varphi ( x + \eps v z) - \varphi ( x)) \d z \d v\\ 
 & = \bigg(  \int_{\R^d}\int_0^\infty \e^{ - \nu ( v) z} \nu^2 ( v) \big( F( v, E) - M \big) v z \d z \d v \bigg) \cdot \nabla_x \varphi ( x, t) \nonumber \\
                      & \quad + \frac{ 1}{ \eps}\int_{\R^d}   \int_0^\infty \e^{ - \nu ( v) z} \nu^2 ( v) \big( F( v, E) - M \big) ( \varphi ( x + \eps v z) - \varphi ( x) - \eps v z \cdot \nabla \varphi ( x)) \d z \,  \d v, \nonumber \\
                      & = \mu ( E) \cdot \nabla_x \varphi ( x, t) + \mathcal{R}_\eps \nonumber 
\end{align}
and we can now show that $\mathcal{R}_\eps \to 0$ uniformly in $x$ and $t$: Indeed, Lemma \ref{lem:mu} implies
\begin{align*}
  | \mathcal{R}_\eps  | &\leq C \frac{ 1}{ \eps}\int_{\R^d}   \int_0^\infty \e^{ - \nu ( v) z} \nu^2 ( v) \frac{M(v)}{1+|v|} ( \varphi ( x + \eps v z) - \varphi ( x) - \eps v z \cdot \nabla \varphi ( x)) \d z \,  \d v 
\end{align*}
and for any $\eta \in [1,2]$, we have 
$$ |\varphi ( x + \eps v z) - \varphi ( x) - \eps v z \cdot \nabla \varphi ( x)|\leq C_\eta (\eps |v| z)^\eta .$$
We deduce
\begin{align*}
   |\mathcal{R}_\eps |  &\leq C_\eta  \eps^{\eta-1} \int_{\R^d}   \int_0^\infty \e^{ - \nu ( v) z} \nu^2 ( v) \frac{M(v)}{1+|v|} (|v| z)^\eta \d z \,  \d v 
\end{align*}
The integral in the right hand side is finite as long as  $\eta<2$ so we can take $\eta=3/2$ and deduce
$$ \|\mathcal{R}_\eps \|_{L^\infty}   \to 0 \mbox{ as } \eps\to 0$$

We have thus shown that
\[
 \L^\eps_2 ( \varphi) \to  \mu ( E) \cdot \nabla_x \varphi ( x, t)
\]
uniformly in $x$ and $t$ as $\eps \to 0$, which implies that  $\L^\eps ( \varphi)$ converges uniformly to
\[
 - \kappa ( - \Delta)^{ 1/2} ( \varphi) ( x, t) + \mu ( E) \cdot \nabla_x \varphi ( x, t).
\]
Passing to the limit in \eqref{weakform3} (the first two terms are handled exactly as in the proof of Theorem \ref{thm:main}), we deduce
$$
 \int_0^\infty \int \rho \dt \phi  \d x\d t + \int \rho^{ in} \phi |_{ t=0} \d x 
 + 
 \int_0^\infty \int \rho \left[ - \kappa ( - \Delta)^{ \alpha / 2} ( \phi) 
- \mu( E) \cdot \nabla_x \phi\right] \d x \d t = 0
$$
which is the weak formulation of \eqref{drifdifeq2}.

\end{proof}

\medskip

\begin{proof}[Proof of Lemma \ref{lem:mu}]
First, we note that for any $E\in\R^d$, the function $v\mapsto R(v,E)$ solves
$$ \T (R) = -   E \cdot \nabla_v M.$$
Using the coercivity property of $\T$ \eqref{coerineq} and the fact that $\int_{\R^d} R(v,E)\, \d v = 0$, we deduce
$$
 \left( \int \frac{ R^2}{ F} \d v \right)^{ 1 /2} \leq C |E|  \left( \int \frac{ |\nabla M|^2}{ F} \d v \right)^{ 1 /2} \leq C|E|.
 $$

Next, we rewrite the equation for $R$ as
\begin{equation}\label{eq:basref}
\nu R -  E \cdot \nabla_v R  = \K ( R) - E \cdot \nabla_v M.
\end{equation}
Using the fact that $\int_{\R^d} R(v,E)\, dv=0$, we can write
$$ \K(R)(v)  = \int_{\R^d} (\sigma(v,v')-\nu_0) R(v')\, dv',$$
and so using \eqref{eq:silim}, we obtain
\begin{align}
 |\K ( R)| & \leq \int | \sigma - \nu_0| | R ( v',E)| \d v' M( v) \nonumber \\
            & \leq \frac{ C M( v)}{ 1 + | v|} \int | R ( v',E)| \d v' \nonumber \\
           & \leq \frac{ C M( v)}{ 1 + | v|} \left( \int | R ( v',E)|^2 \frac{ \d v'}{ F( v',E)} \right)^{ 1 /2}\nonumber\\
           & \leq C|E| \frac{  M( v)}{ 1 + | v|} . 
             \label{Kest}
\end{align}
Finally,  assumptions \eqref{derdecay2} yields
\[
 | E \cdot \nabla_v M ( v) | \leq C|E| \frac{ M ( v)}{ 1 + | v|}.
\]
We thus have
$$|\nu R -  E \cdot \nabla_v R  | \leq C|E| \frac{ M ( v)}{ 1 + | v|},$$
which implies (using Remark \ref{frem})
$$ |R(v,E)| \leq  C|E| \frac{ M ( v)}{ 1 + | v|} \, \quad \mbox{ for all } v\in \R^d, \; E\in\R^d$$
and the lemma follows.
\end{proof}

\bibliographystyle{siam}

\bibliography{bibliography.bib}

\end{document}